\def\pair#1#2{{#1}\mid {#2}}
\theoremstyle{plain}
\newtheorem*{thm@P}{Theorem \pipp@}
\numberwithin{equation}{section}
\newtheorem{thm}{Theorem}[section]
\newtheorem*{thm*}{Theorem}
\let\old@newtheorem\newtheorem % SK
\renewcommand{\newtheorem}[2]{\old@newtheorem{#1}[thm]{#2} % SK
	\newrefformat{#1}{#2~\ref{##1}}} %SK
\newtheorem{prop}{Proposition}
\newtheorem{cor}{Corollary}
\newtheorem{lem}{Lemma}
\theoremstyle{definition}
\newtheorem{defn}{Definition}
\newtheorem{exa}{Example}
\theoremstyle{remark}
\newtheorem{rem}{Remark}
\newcommand{\Q}{\mathbb Q}
\newcommand{\R}{\mathbb R}
\newcommand{\N}{\mathbb N}
\newcommand\ntrunc{\mathrel{\blacktriangleleft}}
\newcommand\ntrunceq{\mathrel{\ooalign{{\raise-1ex\hbox{$\relbar$}}\cr\raise.1ex\hbox{$\ntrunc$}}}}
\DeclareMathOperator{\birth}{birthday}
\DeclareMathOperator{\nr}{NR}
\newcommand{\no}{\mathbf {No}}
\newcommand{\Z}{\mathbb Z}
\newcommand{\n}{\mathfrak n}
\newcommand{\m}{\mathfrak m}
\newcommand{\x}{\mathbf{x}}
\newcommand{\nin}{\not\in}
\newcommand{\rest}{\! \restriction \!}
\DeclareMathOperator{\ind}{index}
\def\eps{\varepsilon}
\def\oxp#1{{\omega}^{#1}}
\def\orexp#1{{\omega}^{\cdot #1}}
\DeclareMathOperator{\suchthat}{\,:\,}
\def\on{\mathbf{On}}
\def\M{\mathfrak{M}}
\def\T{\mathbb{T}}
\title{Surreal numbers, exponentiation and derivations}
\author{Alessandro Berarducci}
\address{Alessandro Berarducci, Università di Pisa, Dipartimento di Matematica, Largo Bruno Pontecorvo 5, 56127 Pisa, Italy}
\date{15 August 2020}
\keywords{Surreal numbers, transseries}
\subjclass[2010]{03C64,	03E10,16W60,26A12,41A58}
\thanks{Partially supported by the Italian research project PRIN 2017, ``Mathematical logic: models, sets, computability'', Prot. 2017NWTM8RPRIN.}
\begin{document}

	\begin{abstract}
		We give a presentation of Conway's surreal numbers focusing on the connections with transseries and Hardy fields and trying to simplify when possible the existing treatments. 
		%The goal is not a survey of the literature, but a presentation of few selected results and open problems, simplifying when possible the existing treatments.
		%We include an analysis of the properties of the exponential map with respect to the simplicity relation which is useful in inductive proofs. 
%		%
%		I shall report on recent advances in the theory of surreal numbers and their connections with core issues in model theory, including the theory of transseries. 
		\end{abstract}
\maketitle
\tableofcontents

	\section{Introduction} 
		Conway's field $\no$ of surreal numbers \citep{Conway76} includes  both  the field of real numbers $\R$ and the class $\on$ of all ordinal numbers. The surreals originally emerged as a subclass of the larger class of {\em games}, comprising for instance the game of ``Go'' and similar combinatorial games. In this paper we shall however be interested in
		%the properties of $\no$ as a real closed field an exponential and a logarithmic function, and focus the attention on 
		the more recent connections with transseries and Hardy fields.
		
		The field $\T$ of transseries \cite{Dries1997, DriesMM2001} (see \cite{Kuhlmann2000,Kuhlmann2012d} for a different variant) is an extension of the field of Puiseux series that
		plays an important role in Ecalle's positive solution of the problem of Dulac \cite{Dulac1923, Ecalle1992}: the finiteness of limit cycles in polynomial planar vector fields. Unlike the Puiseux series, the transseries are closed under formal integration and admit an exponential and a logarithmic function. It is possible to consider $\T$ as a universal domain for the existence of solutions of an important class of formal differential equations of non-oscillatory nature. This is made precise in \cite{Aschenbrenner2017}, where the first-order theory of $\T$ is shown to be recursively axiomatisable and model complete. 
		
		A first connection between $\T$ and $\no$ comes from the fact that $\no$ admits a representation in terms of generalised series \cite{Conway76} and has an exponential function $\exp:\no\to \no$ extending the real exponential function \cite{Gonshor1986}. This makes it possible to interpret surreal numbers as asymptotic expansions, as in \cite{Berarducci2019a}. We can then see $\T$ as a substructure of $\no$, with the ordinal $\omega$ playing the role of the formal variable, and introduce a strongly additive surjective derivation $\partial:\no\to \no$ compatible with the exponential function and extending the derivation of $\T$ \cite{Berarducci2018}. It turns out that $\T$ is an elementary substructure of $\no$ both as a differential field \cite{Aschenbrenner2019}, and as an exponential field \cite{DriesE2001} (based on \cite{Ressayre1993,Dries1994}). Very recently Kaplan has announced an axiomatisation and a model completeness result in the language with both $\partial$ and $\exp$, thus showing that $\T$ is an elementary substructure of $\no$ in this language \cite{Kaplan2020}. 
			Another connection with transseries comes from the work of \citet*{Costin2015}, where it is shown that Ecalle-Borel transseriable functions extend naturally to $\no$. 
			
			A remarkable feature of $\no$, not shared by the field of transseries, is its universality. Every divisible ordered abelian group is isomorphic to an initial subgroup of $\no$, and every real-closed field is isomorphic to an initial subfield of $\no$ \cite{Ehrlich2001a}. In a similar spirit it can be  shown that every Hardy field can be embedded in $\no$ as a differential field \cite{Aschenbrenner2019}.
			%\cite{VandenDries2019}
			 
			In this paper we shall give a detailed presentation of the rich structure on $\no$, including the exponential and differential structure, and describe some of the connections with transseries and Hardy fields. This paper can be read as a supplement to \cite{Mantova2017a}. 
%			
%			In the last part of the paper we discuss the problem of defining a composition on $\no$ extending the composition of transseries.  
%			Partial results in this direction allow to interpret a transseries as a function from $\no^{>\N}$ to $\no$. The existence of a well behaved composition is also related to the open problem of representing $\T$ as a field of hyperseries \cite{Aschenbrenner2017g,VandenDries2018,Bagayoko2019}.
%			
	
	\section{Simplicity and order}
Let $\on$ be the class of von Neumann {\bf ordinals}. We recall that an ordinal coincides with the set of all smaller ordinals. \citet{Conway76} defined the surreals as a subclass of the class of ``games''. He then showed that it is possible to represent each surreal as a transfinite binary sequence, called its {\bf sign-expansion}.  
Following \citet{Gonshor1986} we define the surreal numbers directly as sign-expansions. More precisely, a {\bf surreal number} is a function $x: \alpha \to \{-,+\}$ from some ordinal $\alpha$ to $\{-,+\}$. If $x$ is as above, we call $\alpha$ the  {\bf birthday} 
of $x$ and we write $\alpha= \birth(x)$. We also say that $x$ is born on day $\alpha$. 

	The class $\no$ of all surreal numbers has a natural structure of a complete binary tree whose nodes are the sign-expansions. 
	The {\bf ancestors} of a surreal $y:\alpha \to \{-1,1\}$ are the restrictions of $y$ to a smaller ordinal $\beta < \alpha$. If $x$ is an ancestor of $y$, we say that $x$ is {\bf simpler} than $y$, or that $y$ is a {\bf descendant} of $x$. If $x$ is simpler than $y$, then clearly $\birth(x) < \birth(y)$. The empty sequence, born on day $0$, is the root of the tree and coincides with the simplest surreal number. Each surreal $x:\alpha\to \{-,+\}$ has two successors: a {\bf left-successor} ``$x-$'' obtained by appending the sign ``$-$'' at the end of $x$, and a {\bf right-successor} ``$x+$'' obtained by appending the sign ``$+$'' at the end of $x$. These are the immediate descendants of $x$. Each descendant of $x$ is either an immediate descendant, or a descendant of an immediate descendant. 
	
	We can now introduce a {\bf total order} $<$ on $\no$ which essentially consists in projecting the tree on a horizontal axis parallel to a line of ``siblings'' in the tree. More precisely, a surreal $x$ is bigger than its left-successor and its descendants, and smaller than its right-successor and its descendants. Thus $$x- < x < x+$$ and the same inequalities hold if we append arbitrary sequences after $x-$ and $x+$, so for instance $+-+ < + < ++-$. 
	
	\smallskip
It should be remarked that $\no$, like $\on$, is not a set but a proper class.	As an ordered class, $\no$ has a remarkable universal property: every totally ordered {\em set} can be embedded in $(\no,<)$.

	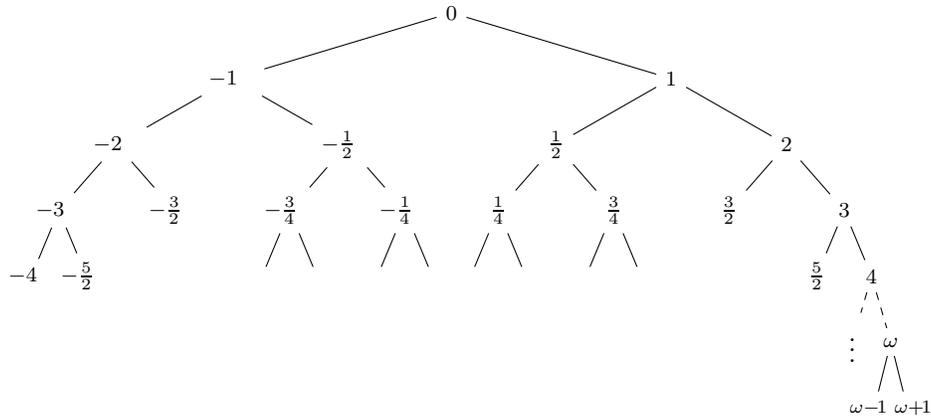
\begin{figure}[ht]
		\begin{tikzpicture}[level distance = 12mm,font=\fontsize{8pt}{10pt},scale = 0.73]
		\tikzstyle{level 1} = [sibling distance = 80mm]
		\tikzstyle{level 2} = [sibling distance = 42mm]
		\tikzstyle{level 3} = [sibling distance = 21mm]
		\tikzstyle{level 4} = [sibling distance = 10mm]
		\tikzstyle{level 5} = [sibling distance = 7mm]
		\tikzstyle{level 6} = [sibling distance = 6mm,font=\fontsize{7pt}{10pt}]
		\tikzstyle{level 7} = [sibling distance = 6mm,font=\fontsize{6pt}{8pt}]
		%[level/.style={sibling distance=80mm/#1}]
		\node  (z){$0$}
		child {node  (-1) {$-1\phantom{-}$}
			child {node  (-2) {$-2\phantom{-}$}
				child {node {$-3\phantom{-}$}
					child {node  (-3) {$-4\phantom{-}$}}
					child {node  (e) {$-\frac{5}{2}\phantom{-}$}}
				} 
				child {node {$-\frac{3}{2}\phantom{-}$}}
			}
			child {node  (-1/2) {$-\frac{1}{2}\phantom{-}$}
				child {node {$-\frac{3}{4}\phantom{-}$}
					child {node {}}
					child {node {}}	
				}
				child {node {$-\frac{1}{4}\phantom{-}$}
					child {node {}}
					child {node {}}
				}
			}
		}
		child {node  (1) {$1$}
			child {node  (1/2) {$\frac{1}{2}$}
				child {node {$\frac{1}{4}$}
					child {node {}}
					child {node {}}
				}
				child {node {$\frac{3}{4}$}
					child {node {}}
					child {node {}}
				}
			}
			child {node  (2) {$2$}
				child {node {$\frac{3}{2}$}}
				child {node (3){$3$}
					child {node  (5/2) {$\frac{5}{2}$}}
					child {node  (4) {$4$}
						child {node (left4) {$\vdots$} edge from parent[dashed] }
						child {node (omega) {$\omega$} edge from parent[dashed] 
							child {node (omega+1) {$\omega\kern-2pt-\kern-3pt 1\;\;$} edge from parent[solid]  }
							child {node (omega-1) {$\;\;\omega\kern-2pt +\kern-3pt 1$} edge from parent[solid] }
						}
					}
				}
			}
		};
		\end{tikzpicture}
		\caption[Fig 1]{A few surreal numbers} \label{fig:tree}
	\end{figure}
	
\section{Left and right options}	
A subclass $C\subseteq \no$ is {\bf convex} if whenever $x<y<z$ are surreal numbers and $x,z$ are in $C$, also $y$ belongs to $C$. Every non-empty convex subclass of $\no$ has a simplest element, given by the element with smallest birthday. 
	If $L$ and $R$ are sets of surreals, we write $L<R$ if each element of $L$ is smaller than each element of $R$. In this case, the class of all elements $x\in \no$ satisfying $L<x<R$ is non-empty and convex. We write 
	$$x=\pair L R$$ 
	to express the fact that $x$ is the simplest surreal such that $L<x<R$. This representation is not unique, but it can be made unique adding the condition that $L\cup R$ is the set of all of all surreals simpler than $x$. This is called the {\bf canonical representation} of $x$. In this case, the elements of $L$ are called the \textbf{left-options} of $x$, while the elements of $R$ are its \textbf{right-options}. 
	\smallskip
	
		We can now name a few surreals  (see \prettyref{fig:tree}). The root of the tree is the simplest surreal, and it is written as $0 = \pair{\emptyset}{\emptyset}$. Then we have its right-successor  
		$1 = \pair{\{0\}}{\emptyset}$ and its left-successor $-1 = \pair{\emptyset}{\{0\}}$. The simplest surreal between $0$ and $1$ is $1/2 = \pair{\{0\}}{\{1\}}$, which coincides with the left-successor  of $1$. With these definitions we have $-1<0<1/2<1$. These labels are consistent with the ring operations that we shall define below.

\section{Sum and product}
We shall define the sum $x+y$ and the product $xy$ of two surreal numbers $x$ and $y$ by induction on simplicity (using the fact that simplicity is a well founded relation) so as to obtain an ordered field. Assume that we have already defined $a+y,\; x+b, \; a+b$ 
%
%	$$a+y,\;\; x+b,\;\; a+b, \quad \quad \quad \quad ay,\;\; xb,\;\; ab$$ 
	%
	for all $a$ simpler than $x$ and $b$ simpler than $y$. 
	The axioms of ordered rings dictate that the operation $+$ should be strictly increasing in both arguments. This motivates the definition
	\[
	x+y=\{x^L+y,\ x+y^L\}\mid\{x^R+y,\ x+y^R\}
	\]
	where $x^L$ ranges over the left-options of $x$ and 
	and $x^R$ ranges over its right-options. 
	
	We define $-x$ exchanging recursively the left and right options: 
	$$-x = \pair{\{-x^R\}}{\{-x^L\}}.$$
	In terms of sign-expansions, $-x$ is obtained from $x$ exchanging all plus signs with minus signs. It can be verified that $(\no,<,+)$ is an ordered abelian group and $-x$ is the opposite of $x$, that is $x+(-x) = 0$. As usual we write $x-y$ for $x+(-y)$. 
	\smallskip
	
	To define the product $xy$ we assume that we have already defined $ay,\; xb, \; ab$ for all $a$ simpler than $x$ and all $b$ simpler than $y$. We now impose the distributivity law
	$(x-a)(y-b) = xy - xb-ay + ab$ and observe that this equality, together with the axioms of ordered rings, determines the sign of the difference $xy - (xb+ay-ab)$ given the relative order of $x,y,a,b$. We define $xy$ as the simplest surreal such that these signs are respected. More formally, we put
\[
xy=\{x^Ly+xy^L-x^Ly^L,\ x^Ry+xy^R-x^Ry^R\}\mid\{x^Ly+xy^R-x^Ly^R,\
x^Ry+xy^L-x^Ry^L\}.
\]
where as above $x^L$ ranges over the left-options of $x$,  $x^R$ ranges over its right-options, and similarly for $y^L$ and $y^R$. 

\medskip
\citet{Conway76} showed that with these operations $(\no,<,+,\cdot)$ is an ordered field. Moreover he showed that $\no$ is real closed: every polynomial over $\no$ which changes sign has a root in $\no$. 
%The proof of real closedness will follow at once from a representation in terms of generalised power series to be discussed later. 
%A proof of this fact will also follow from Conway's normal form to be discussed in \prettyref{sec:normal}

\section{Embedding the reals}
 Since $\no$ is an ordered field, it contains a unique subfield $\Q\subset \no$ isomorphic to the rational numbers. The subgroup of the dyadic
	rationals $\frac{m}{2^{n}}\in\Q$, with $m\in \Z$ and $n\in\N$, correspond
exactly to the surreal numbers $s:k\to\{0,1\}$ whose birthday is a finite ordinal $k\in\N$. 

Now, given a real number $r\in \R$, let 
$L\subseteq \Q$ be the set of rationals $<r$ and let $R \subseteq \Q$ 
be the set of rationals $>r$. If we identify $r$ with the surreal $\pair{L}{R}$ we obtain inclusions of ordered fields $$\Q\subset \R \subset \no.$$ 
Under this identification, a surreal number $x:\alpha\to \{+,-\}$ belongs to $\R$ if and only its birthday is $\leq \omega$ and its sign-expansion is not eventually constant (see \cite{Conway76} or \cite[p.\ 33]{Gonshor1986}). 
By the classical work of Tarski, the theory of real closed ordered field is model complete, so the field $\R$ is an elementary substructure of $\no$. 
 
\section{Embedding the ordinals}
The ordinal numbers admit a natural embedding in the surreals. 
The ordinal $\alpha$ is mapped to the sign-expansion $x:\alpha \to \{-,+\}$ consisting entirely of plus signs. The image of this embedding is given by the surreals which admit a representation of the form $\pair L \emptyset$. 
When there is no risk of confusion we identify the ordinals with their image in $\no$ and write $\on\subset \no$. Note however that we cannot make this identification when speaking about the birthday of a surreal number.

The natural numbers $\N\subset \Q \subset \no$ coincide with the finite ordinals under the above embedding. The simplest surreal bigger than all natural numbers is the ordinal $\omega = \pair{\{0,1,2,\ldots \}}{\emptyset}$; its successor is the ordinal $\omega+1 = \pair{\{0,1,2,\ldots, \omega \}}{ \emptyset}$. 

The surreal sum and product, when restricted to $\on\subset\no$, coincide with the Hessenberg sum and product
of ordinal numbers \cite[p. 28]{Conway76}. Unlike the usual sum and product of ordinals, the Hessenberg operations are commutative. 

Since $\no$ is a real closed ordered field which includes both $\R$ and $\no$, it contains some strange numbers like $\omega-1 = \pair{\{0,1,2,\ldots \}}{\{ \omega\}}$ or $1/\omega =\pair{\{0\}}{\{ 2^{-n}\}_{n\in \N}} $, or $\sqrt{\omega}$, and we shall later see that it also contains $\log(\omega)$ and $\exp(\omega)$. 

\section{Asymptotic notations}\label{sec:asymptotic}
Given $f,g$ in an ordered abelian group, we write $f\preceq g$
if $|f|\leq n|g|$ for some $n\in\N$; if $f\preceq g$ holds, we say that $f$ is {\bf dominated} by $g$. If both $f\preceq g$ and $g\preceq f$ hold, we say that $f$ and $g$ belong to the same {\bf Archimedean class}, and we write  $f\asymp g$. We say that $f$ is {\bf strictly dominated} by $g$, written $f\prec g$, if we have both $f\preceq g$ and $f\not\asymp g$.  We define $f\sim g$ as $f-g \prec f$ and we say in this case that $f$ is {\bf asymptotic} to $g$. Notice that $\sim$ is a symmetric relation. Indeed assume $f-g \prec f$ and let us prove that $f-g\prec g$. This is clear if $f\preceq g$. On the other hand if $g \prec f$, then $f-g \asymp f$, contradicting the assumption. 

\smallskip
We shall write $O(f)$ for the set of all $g$ such that $g\preceq f$ and $o(f)$ for the set of all $g$ such that $g\prec f$. 

\smallskip
Now let $K$ be an ordered field. We can use the above notations for elements of $K$ referring to the underlying structure of additive ordered group. 
Given a multiplicative subgroup $\mathfrak{N}\subseteq K^{>0}$, we say that $\mathfrak{N}$ is a \textbf{group of monomials} of $K$ if for every $f\in K\setminus \{0\}$
there is one and only one $\n\in\mathfrak{N}$ with $f\asymp \n$. 

\smallskip
Every ordered field $K$ admits a Krull valuation $v:K^*\to v(K^*)$ whose value ring is the subring of finite elements $O(1)$. This is called the {\em natural valuation}, or {\em Archimedean valuation}. The restriction of the natural valuation to a group of monomials $\mathfrak{N}$ is an isomorphism from $\mathfrak{N}$ to $v(K^*)$. In other words, a group of monomials is a section of the natural valuation. 
\begin{exa}
	Let $\R(\x)$ be the field of rational functions ordered by $\x>\R$ and let $f,g\in \R(\x)$. We have $f\prec g$ if $f/g$ tends to $0$; 
	$f\sim g$ if $f(x)/g(x)$ tends to $1$; and $f\asymp g$ if  $f/g$ tends to a non-zero limit in $\R$. The multiplicative group $\x^\Z$ is a group of monomials of $\R(\x)$. 
\end{exa}
If $K$ is a real closed field, its value group with respect to the Archimedean valuation is a $\Q$-vector space. From the existence of basis in vector spaces it follows that every real closed field admits a group of monomials, but this property may fail if we relax the assumption that the field is real closed. 

\section{Generalised power series}
Let $(\mathfrak{N},<,\cdot,1)$ be an abelian ordered group, written in multiplicative notation. We write $\R((\mathfrak{N}))$ to denote Hahn's field of generalised power series with monomials in $\mathfrak{N}$ and we recall that $\R((\mathfrak{N}))$ is a maximal ordered field with a group of monomials isomorphic to $\mathfrak{N}$ (\citet{Kaplansky1942}). 
%We recall the definition of the Hahn field $\R((\mathfrak{N}))$ of generalised power series. 
% In \prettyref{sec:normal} we shall represent the surreal numbers as generalised power series. 

\begin{defn}[\citet{Hahn1907}]
An element of $\R((\mathfrak{N}))$ is a function $f:\mathfrak{N}\to \R$ whose {\bf support} $\{\n \in \mathfrak{N} \mid f(\n ) \neq 0\}$ is a reverse well ordered subset of $\mathfrak{N}$. To denote such a map, we use the notation 
$f= \sum_{\n \in \mathfrak{N}} \n f(\n )$, or the notation 
$$f=\sum_{i<\alpha}\n_{i}r_{i}$$
 where $\alpha\in \on$, $(\n_i)_{i<\alpha}$ is a decreasing enumeration of the support of $f$, and $0\neq r_i = f(\n_i)\in \R$ for all $i<\alpha$ (if $\alpha = 0$, the sum is empty, and $f=0$). 

	 The pointwise addition of functions makes $\R((\mathfrak{N}))$ into a group. 
	   The multiplication $fg$ is defined by the usual convolution formula: the coefficient of $\n$ in the product $fg$ is the sum $\sum f (\mathfrak a)g(\mathfrak b)$ taken over all pairs $(\mathfrak a, \mathfrak{b})$ with $\mathfrak{a}\mathfrak{b}= \m$. Since the supports of $f$ and $g$ are reverse well ordered, there are only finitely many pairs $(\mathfrak a, \mathfrak{b})$ such that the real number $f (\mathfrak a)g(\mathfrak b)$ is non-zero, so $fg$ is well defined. With these operations $\R((\mathfrak{N}))$ is obviously a ring, and we shall see below that it is a field. We order $\R((\mathfrak{N}))$ in the obvious way: 
	   if $f = \sum_{i<\alpha}\n_{i}r_{i} \neq 0$, then $f>0 \iff r_0 >0$. 
	   
	   \end{defn}
	   \begin{exa}
	   	The field of Laurent series in descending powers of $\x$ coincides with the Hahn field $\R((\x^\Z))$ ordered by $\x>\R$. 
	   \end{exa}	
	We introduce a notion of infinite sum in $\R((\mathfrak{N}))$ as follows. 
	\begin{defn}
			A family $(f_{i}\suchthat i\in I)$ in $\R((\mathfrak{N}))$ is {\bf summable} if
		each $\n\in \mathfrak{N}$ belongs to the support of finitely many $f_{i}$ and 
		there is no strictly increasing sequence $(\n_k)_{k \in \N}$ in $\mathfrak{N}$ such that each $\n_k$ belongs to the support of some $f_i$. 
		The sum $$f=\sum_{i\in I}f_{i}$$ is then defined adding the coefficients of the corresponding monomials. 
	\end{defn}
	Given a multi-index $i = (i_1,\ldots,i_n) \in \N^n$ and $x = (x_1,\ldots,x_n)$ in $\R((\mathfrak{N}))^n$, let $x^i = x_1^{i_1}x_2^{i_2}\cdots x_n^{i_n}$.  We write $x\prec 1$ if $x_i\prec 1$ for all $i=1,\ldots, n$.
	
\begin{lem}[Neumann's lemma  \citep{Neumann1949,Alling1987}]\label{lem:Neumann} 
	For every $\eps \prec 1$ in $\R((\mathfrak{N}))^n$ and  every $\{r_i \}_{i \in \N^n} \subseteq \R$ the family $( r_i \eps^i)_{i \in \N^n}$ is summable. 
\end{lem}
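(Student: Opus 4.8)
The plan is to reduce summability of $(r_i\eps^i)_{i\in\N^n}$ to two statements about the monomial group $\mathfrak N$, prove the first by a minimal--bad--sequence argument, and deduce the second from the first by passing to a slightly larger monomial group.

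\emph{Reduction.} Put $S:=\bigcup_{j=1}^n\supp(\eps_j)$. Since $\eps_j\prec 1$ forces every monomial of $\eps_j$ into $\mathfrak N^{<1}$, we have $S\subseteq\mathfrak N^{<1}$, and $S$ is reverse well ordered, being a finite union of reverse well ordered sets. For $k\in\N$ write $S^{(k)}$ for the set of products $\m_1\cdots\m_k$ with all $\m_l\in S$ (so $S^{(0)}=\{1\}$ and $S^{(a)}S^{(b)}=S^{(a+b)}$). As the support of a product is contained in the product of the supports, $\supp(\eps^i)\subseteq S^{(|i|)}$ with $|i|=i_1+\cdots+i_n$, hence $\bigcup_{i\in\N^n}\supp(r_i\eps^i)\subseteq M:=\bigcup_{k\ge 0}S^{(k)}$. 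The two conditions defining summability will then follow from:
\begin{enumerate}[label=(\alph*)]
\item $M$ contains no strictly increasing sequence (i.e.\ $M$ is reverse well ordered);
\item for every $\n\in\mathfrak N$ the set $\{k\in\N:\n\in S^{(k)}\}$ is finite.
\end{enumerate}
Indeed (a) gives the second summability condition outright, and (b) gives the first: if $\n\in\supp(r_i\eps^i)$ then $\n\in S^{(|i|)}$, so $|i|$ is bounded in terms of $\n$, leaving only finitely many such $i\in\N^n$.

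\emph{Proof of (a).} Argue by contradiction with a minimal bad sequence. Call an infinite strictly increasing sequence in $M$ \emph{bad}, and for $\nu\in M$ let $\ell(\nu)\in\N$ be the least $k$ with $\nu\in S^{(k)}$. If a bad sequence exists, build one $(\nu_i)_{i\ge1}$ greedily, choosing at each step $\nu_i$ with $\ell(\nu_i)$ minimal among all elements continuing $\nu_1<\cdots<\nu_{i-1}$ to a bad sequence (possible since $\N$ is well ordered). Every element of $M$ is $\le 1$, so a strictly increasing sequence in $M$ has all terms $<1$ and $\ell(\nu_i)\ge 1$; write $\nu_i=s_{i,1}\cdots s_{i,\ell(\nu_i)}$ with $s_{i,1}$ the largest factor. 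Since $(S,\ge)$ is a well order, $(s_{i,1})_i$ has a non-increasing infinite subsequence $s_{i_1,1}\ge s_{i_2,1}\ge\cdots$. Set $\nu'_{i_m}:=\nu_{i_m}\,s_{i_m,1}^{-1}\in M$, so $\ell(\nu'_{i_m})\le\ell(\nu_{i_m})-1$. As $s_{i_m,1}<1$ we get $\nu'_{i_m}>\nu_{i_m}$, and as $(s_{i_m,1})$ is non-increasing while $(\nu_{i_m})$ is increasing, $(\nu'_{i_m})_m$ is again strictly increasing. Hence $\nu_1<\cdots<\nu_{i_1-1}<\nu'_{i_1}<\nu'_{i_2}<\cdots$ (using $\nu'_{i_1}>\nu_{i_1}>\nu_{i_1-1}$) is a bad sequence in which $\nu'_{i_1}$ has strictly smaller $\ell$-value than $\nu_{i_1}$, contradicting the minimal choice of $\nu_{i_1}$.

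\emph{Proof of (b) and conclusion.} Given $\n\in\mathfrak N$, apply (a) in the ordered abelian group $\mathfrak N\times\Z$ with the lexicographic order in which $\mathfrak N$ dominates and $(1,1)$ is infinitesimal, i.e.\ $(g,m)>(g',m')$ iff $g>g'$, or $g=g'$ and $m<m'$. Then $S':=\{(s,-1):s\in S\}$ lies in $(\mathfrak N\times\Z)^{<1}$ and is reverse well ordered (order-isomorphic to $S$), and its generated set $\bigcup_k(S')^{(k)}=\bigcup_k\bigl(S^{(k)}\times\{-k\}\bigr)$ is reverse well ordered by (a). For each $k$ with $\n\in S^{(k)}$ the point $(\n,-k)$ lies there, and $k<k'$ gives $(\n,-k)<(\n,-k')$; so if $\n$ were in infinitely many $S^{(k)}$ we would obtain an infinite strictly increasing sequence there, contradicting (a). This gives (b), and with (a) the summability of $(r_i\eps^i)_{i\in\N^n}$.

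\emph{Main obstacle.} The real work is (a). Finite unions and finite products of reverse well ordered subsets of an ordered abelian group are routinely reverse well ordered, but $M=\bigcup_k S^{(k)}$ is an \emph{infinite} union, which is precisely where that elementary bookkeeping fails; some genuinely well-founded recursion (here the minimal bad sequence, in the spirit of Nash--Williams/Higman) appears to be unavoidable. The remaining ingredients — the reduction, the device of adjoining a $\Z$-factor for (b), and the fact that a sequence in a reverse well ordered set has a non-increasing subsequence — are elementary.
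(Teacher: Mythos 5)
The paper gives no proof of this lemma, deferring entirely to \citet{Neumann1949} and \citet{Alling1987}; your argument is a correct, self-contained rendition of precisely that classical proof, recast as a Nash--Williams minimal-bad-sequence argument. The reduction to properties (a) and (b) of $M=\bigcup_k S^{(k)}$ matches the two clauses of the paper's definition of summability, the minimal-bad-sequence proof of (a) is sound (including the edge case $\ell(\nu_{i_1})=1$, which only sharpens the contradiction), and the auxiliary lexicographic factor $\mathfrak{N}\times\Z$ for (b) is a legitimate device --- indeed some such device is needed, since the naive bound $\n\le s_{\max}^{k}$ does not force $k$ to be bounded in a general ordered abelian group.
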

If $0\neq f \in \R((\mathfrak{N}))$, we can find the multiplicative inverse of $f$ as follows. Factoring out the leading monomial we write $f = \n r(1+\eps)$ with $\n \in \mathfrak{N}$, $r\in \R^*$ and $\eps \prec 1$. Then $f^{-1} = \n^{-1}r^{-1}(1+\eps)^{-1}$ where  $(1+\eps)^{-1}= \sum_{n\in \N} (-1)^n \eps^n$ (the sum exists by Neumann's lemma). 
	
%	\begin{thm}[See \citep{Kaplansky1942}] $\R((\mathfrak{N}))$ is a maximal ordered field with a group of monomials isomorphic to $\mathfrak{N}$. 
%	\end{thm}

	\section{The omega-map}
	For $x \in \no$, we say that $x$ is a {\bf surreal monomial} if $x$ is the simplest positive surreal in its Archimedean class. 
	The surreals monomials form a group under the multiplication of $\no$, so they are indeed a group of monomials of $\no$ according to the previous definitions. Since $\no$ is a proper class, its group of monomials is also a proper class. What is more interesting is that the class of monomials is order isomorphic to $\no$ and it can be parametrised as follows. 
	\begin{thm}[\citet{Conway76}]
		There is an increasing map  $$x\in \no \mapsto \oxp{x} \in \no$$ whose image $\oxp{\no}$ is the class of surreal monomials and such that
		\begin{enumerate}
			\item $\oxp{0} = 1$;
			\item $\oxp{1} = \omega$ (the ordinal $\omega$ seen as a surreal); 
			\item $\oxp{x+y} = \oxp{x} \oxp{y}$. 
		\end{enumerate}
	This implies in particular that $\omega^{(xn)} = (\omega^x)^n$ for each $n\in \Z$, so we can write $\omega^{xn}$ without parenthesis. 
	\end{thm}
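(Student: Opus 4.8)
%
The plan is to construct the map $x\mapsto\oxp{x}$ by induction on simplicity, using the Conway bracket, and then verify the three properties together with surjectivity onto the class of monomials. The natural guess for the recursive definition is
\[
\oxp{x}=\bigl\{\,0,\ r\,\oxp{x^L}\,\bigr\}\mid\bigl\{\,s\,\oxp{x^R}\,\bigr\},
\]
where $x^L,x^R$ range over the left and right options of $x$ and $r,s$ range over the positive reals (equivalently, over positive natural numbers, since $\oxp{x^L}\prec r\,\oxp{x^L}$ for every real $r>0$ once one knows the values are in the right Archimedean classes). First I would check that this is a legitimate Conway cut: one needs $0<s\,\oxp{x^R}$ and $r\,\oxp{x^L}<s\,\oxp{x^R}$ for all the relevant options, which by the inductive hypothesis that $x^L<x<x^R$ reduces to showing that $\oxp{}$ is (strictly) increasing on simpler arguments and that $\oxp{x^L}\prec\oxp{x^R}$ — i.e.\ that values at simpler arguments that are $<$-separated lie in distinct Archimedean classes, with the cut of reals inserted precisely to force the bracketed element into a fresh Archimedean class above all $\oxp{x^L}$ and below all $\oxp{x^R}$.

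The key steps, in order: (1) prove simultaneously by induction on $\birth(x)+\birth(y)$ (or on simplicity of the pair) that $\oxp{x}>0$, that $x<y\Rightarrow \oxp{x}<\oxp{y}$, and in fact the sharper statement $x<y\Rightarrow \oxp{x}\,r\prec \oxp{y}$ for every real $r>0$, i.e.\ $\oxp{x}$ and $\oxp{y}$ are in different Archimedean classes with $\oxp x\prec\oxp y$; this is what makes the defining cut well formed and simultaneously gives monotonicity. (2) Establish the functional equation $\oxp{x+y}=\oxp{x}\oxp{y}$ by the standard Conway-bracket computation: expand the product $\oxp{x}\oxp{y}$ using the recursive product formula, expand $\oxp{x+y}$ using the recursive sum formula inside the $\omega$-map definition, and use the cofinality/coinitiality theorem (a surreal is unchanged if left and right option sets are replaced by mutually cofinal/coinitial ones, here exploiting that $\N\cdot$ absorbs into the reals and that sums/products of monomials-in-the-making are again of the right form by induction) to see the two brackets define the same surreal. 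From the functional equation, $\oxp{0}=1$ is forced (it is the unique positive idempotent, or directly: the cut $\{0,r\}\mid\emptyset$ with $r$ ranging over positive reals is $1$), and $\oxp{1}=\{0,r\}\mid\emptyset=\omega$ since $0$ has no right options; the remark $\omega^{xn}=(\omega^x)^n$ then follows by an easy induction on $n\in\Z$ from property (3). (3) Show the image is exactly the class of surreal monomials: surjectivity comes from proving that $\oxp{x}$ is the simplest element of its Archimedean class (so it is a monomial) and that every Archimedean class is hit — for the latter, given a monomial $\m$, its Archimedean class determines, via the reals inserted at each stage, a unique $x$ with $\oxp{x}\asymp\m$, and since both $\m$ and $\oxp{x}$ are simplest in that class, $\m=\oxp{x}$; injectivity/strict monotonicity is already in step (1).

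The main obstacle will be step (2): verifying $\oxp{x+y}=\oxp{x}\oxp{y}$ requires carefully matching up the left/right option sets of the two sides through the recursive definitions of $+$ and $\cdot$, and this is where one must invoke the cofinality theorem and the inductive hypotheses (including the Archimedean-class sharpening from step (1), which guarantees that cross terms like $r\,\oxp{x^L}\cdot s\,\oxp{y}$ and $\oxp{x}\cdot t\,\oxp{y^L}$ are absorbed, up to mutual cofinality, into the family $\{u\,\oxp{x^L+y},\,u\,\oxp{x+y^L}\}$). The bookkeeping is genuinely intricate because the product formula produces terms of the form $\oxp{x^L}\oxp{y}+\oxp{x}\oxp{y^L}-\oxp{x^L}\oxp{y^L}$, and one has to see, using $\oxp{x^L}\oxp{y^L}\prec\oxp{x^L}\oxp{y}\asymp\oxp{x}\oxp{y^L}$ (from step (1) and the still-being-proved property (3) at lower rank), that such a term is asymptotic to $\oxp{x^L+y}$ from below by a real factor — so after replacing option sets by cofinal subsets the two descriptions coincide. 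Everything else is routine induction on simplicity; I would also note at the outset that one should prove monotonicity and the functional equation in a single combined induction rather than sequentially, since the well-definedness of the cut for $x$ already uses monotonicity at simpler arguments.
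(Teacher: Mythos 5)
Your plan is the standard Conway--Gonshor construction, and your recursive cut agrees with the paper's \prettyref{defn:omega-map} up to replacing the multipliers $k$ and $2^{-k}$ ($k\in\N$) by arbitrary positive reals, which is harmless by mutual cofinality; the paper itself only states the definition and defers the proof to Conway, so the architecture you propose --- one simultaneous induction giving positivity, strict monotonicity and the Archimedean separation $\oxp{x}\prec\oxp{y}$ for $x<y$, then the functional equation by matching cuts, then the identification of the image with the class of monomials --- is the right one.

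There are, however, two concrete slips, both located in the parts you yourself flag as delicate. First, the relation $\oxp{x^L}\oxp{y}\asymp\oxp{x}\oxp{y^L}$ invoked in your step (2) is false in general: by the functional equation at lower rank these are $\asymp\oxp{x^L+y}$ and $\asymp\oxp{x+y^L}$ respectively, and $x^L+y$ versus $x+y^L$ can go either way (it amounts to comparing $y-y^L$ with $x-x^L$). What the cofinality matching actually requires is only that each cross term of the form $\oxp{x}^L\oxp{y}+\oxp{x}\oxp{y}^L-\oxp{x}^L\oxp{y}^L$ be dominated by a real multiple of $\max(\oxp{x^L+y},\oxp{x+y^L})$, hence absorbed into the left options of $\oxp{x+y}$; as written, your justification of this step would not go through. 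Second, your ``direct'' computation of $\oxp{0}$ is garbled: since $0=\pair{\emptyset}{\emptyset}$ has no options, the defining cut for $\oxp 0$ is $\{0\}\mid\emptyset=1$, whereas the cut $\{0,r\}\mid\emptyset$ with $r$ ranging over the positive reals --- which is the correct cut for $\oxp{1}$ --- equals $\omega$, not $1$; the same cut cannot evaluate both to $1$ and to $\omega$. (Your alternative derivation of $\oxp 0=1$ from the functional equation is fine.) A smaller point: when you replace the positive-real multipliers by integers, the right-hand family must be coinitial in $\R^{>0}$, e.g.\ $2^{-k}$ as in the paper, not cofinal.
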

	The definition of $\oxp{x}$ is the following.  
	\begin{defn}\label{defn:omega-map}
		For $x\in \no$, let $\oxp{x}=\{0,k\oxp{x'} \} \mid \{2^{-k} \oxp{x''} \}$ where $x'$ ranges over the left-options of $x$, $x''$ ranges over its right-options, and $k$ ranges in $\N$. 
	\end{defn}
	%In other words $\omega^x$ is the simplest surreal $>0$ such that for all $x'<x$ simpler than $x$ and for all $x''>x$ simpler than $x$, we have $k\oxp{x'} < \oxp{x} < 2^{-k} \oxp{x''}$ for all positive $k\in \N$. 
	
	It follows from the definition that $\oxp{x}$ is the simplest representative $>0$ of its Archimedean class and that if $x$ is simpler than $y$, then $\oxp{x}$ is simpler than $\oxp{y}$. 
	
	\section{Conway normal form}\label{sec:normal}
	Let $\M= \oxp{\no}$ be the class of surreal monomials and let  $\R((\oxp{\no}))_{\on}=\R((\M))_\on$ be the field of generalised power series with monomials in $\M$; the subscript ``$\on$'' is meant to emphasise that, although the group $\oxp{\no}$ is a proper class, in the definition of  $\R((\oxp{\no}))_{\on}$ we only consider series $\sum_{i<\alpha} \m_i r_i$ whose support is a set (indexed by an ordinal). In this section we shall see that 
	$\no=\R((\omega^\no))_\on$ via a canonical identification.
	
		\begin{rem} Let $\kappa$ be a regular uncountable cardinal and consider the subfield $\no(\kappa) \subset \no$ whose elements are the surreal numbers born before day $\kappa$. It can be shown that $\no(\kappa)$ is isomorphic to a field of the form $\R((\mathfrak{N}))_\kappa$, where $\R((\mathfrak{N}))_\kappa \subseteq \R((\mathfrak{N}))$ consists of the series with support of cardinality $<\kappa$ \cite{DriesE2001}. So $\no$ should not be thought as a field of the form $\R((\mathfrak{N}))$, but rather as a field of the form $\R((\mathfrak{N}))_\kappa$ with $\kappa$ being an inaccessible cardinal living in some larger set theoretic universe. 
			%Similar examples of exponential fields can be found in \cite{Kuhlmann2005,Berarducci2018b}. 
		\end{rem}  
	
	In \S \ref{sec:exp} we shall see that $\no$ admits an exponential map. 
	This should be contrasted with a result of \citet*{Kuhlmann1997} where it is shown that a field of the form $\R((\mathfrak{N}))$ (where $\mathfrak{N}$ is a {\em set} of monomials) never admits an exponential map. 

		\begin{thm}[\citet{Conway76}]\label{thm:normal-form} We can make the identification
		$\no = \R((\oxp{\no}))_\on$ via a canonical isomorphism of ordered fields. 
	\end{thm}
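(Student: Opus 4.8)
The plan is to construct the canonical isomorphism $\no \to \R((\oxp{\no}))_\on$ explicitly and show it is a bijective ordered-field homomorphism. First I would define, for each $x\in\no$, its \emph{Conway normal form} by transfinite recursion on simplicity: given $x$, find the leading term by taking the Archimedean class of $x$, whose simplest positive representative is some monomial $\m_0 = \oxp{y_0}$ (here $y_0$ is the surreal with $\oxp{y_0}\asymp x$, which exists by the omega-map theorem since $\oxp{\no}$ is a group of monomials of $\no$); let $r_0\in\R^*$ be the unique real with $x \sim \m_0 r_0$ (this uses that $\R$ is the residue field, i.e.\ that every finite surreal is infinitely close to a unique real, which follows from the embedding $\R\subset\no$). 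Then $x - \m_0 r_0 \prec \m_0$, and one continues recursively on $x - \m_0 r_0$. The key point is that this process terminates at an ordinal stage: each remainder is strictly simpler in the relevant sense, or more precisely the birthdays/Archimedean classes strictly decrease, so by well-foundedness one obtains a set-indexed decreasing sequence of monomials $\m_0 > \m_1 > \cdots$ (indexed by some $\alpha\in\on$) and reals $r_i$, giving an element $\Phi(x) = \sum_{i<\alpha}\m_i r_i \in \R((\oxp{\no}))_\on$. One must check the support is reverse well ordered — this is where the recursion-theoretic bookkeeping matters, and it is essentially forced by the definition together with the fact that at no stage can an infinite strictly increasing sequence of monomials appear below a fixed monomial.

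Next I would verify that $\Phi$ is order-preserving: if $x<y$, compare their normal forms term by term; at the first index where they differ, the sign of the difference is controlled by the leading real coefficient, matching the order on $\R((\oxp{\no}))_\on$ defined in the excerpt. Injectivity follows from $\Phi$ being a strict order embedding. For surjectivity, given a series $\sum_{i<\alpha}\m_i r_i$, I would produce a surreal realizing it — either by a direct recursion building the sign-expansion, or (cleaner) by invoking the universal/simplicity properties: the partial sums $s_\beta = \sum_{i<\beta}\m_i r_i$ form a chain, and the series is the simplest surreal lying in the appropriate cut determined by $\{s_\beta + \m_\beta q : q<r_\beta,\ \beta<\alpha\}$ below and $\{s_\beta + \m_\beta q : q>r_\beta\}$ above; one checks this cut's simplest element has exactly the prescribed normal form, using the defining recursion for $\oxp{\cdot}$ from Definition~\ref{defn:omega-map} to see that $\m_\beta r_\beta$ behaves correctly against smaller monomials.

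Finally I would check that $\Phi$ respects the ring operations. Additivity is relatively direct: the normal form of $x+y$ is obtained by merging and adding coefficients of common monomials, and one verifies this agrees with $\Phi(x)+\Phi(y)$ by induction on simplicity using the recursive definition of surreal addition together with the strong-additivity/summability framework (Neumann's lemma, \prettyref{lem:Neumann}) to justify manipulating transfinite sums. Multiplicativity is the analogous but heavier computation: one must show $\Phi(xy)$ equals the Hahn-convolution product $\Phi(x)\Phi(y)$, reducing via $\omega^{x+y}=\omega^x\omega^y$ to the case of multiplying monomials by series and then invoking summability to expand $(\sum\m_i r_i)(\sum\n_j s_j)$. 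I expect the \textbf{main obstacle} to be precisely this step — reconciling the recursive surreal product with the convolution formula — because it requires simultaneously controlling the combinatorics of the options in the surreal product definition and the summability of the resulting transfinite family of cross terms; the cleanest route is probably to first establish the identification additively and as an ordered set, then prove multiplicativity by showing both sides satisfy the same recursive characterization, leaning on the already-known fact (stated in the excerpt, after the product definition) that $\no$ is an ordered field so that no independent verification of the ring axioms is needed.
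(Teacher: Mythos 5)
Your overall architecture is the same as the paper's: two mutually inverse maps, one extracting the normal form of a surreal by peeling off leading terms, the other realizing a series as a surreal, with the ring-homomorphism verification deferred. However, there is a genuine gap at the step you yourself flag as "recursion-theoretic bookkeeping": the termination of the peeling process. Your stated reason — that "each remainder is strictly simpler" or that "the birthdays/Archimedean classes strictly decrease" — is not correct and does not suffice. The remainder $g_{\alpha+1}=g_\alpha-\m_\alpha r_\alpha$ need not be simpler than $g_\alpha$ or than $f$, and the fact that the monomials $\m_i$ strictly decrease only tells you the support is reverse well ordered \emph{if} the process stops at a set-sized ordinal; it does not by itself rule out the extraction running through a proper class of stages, in which case $\Phi(f)$ would not be an element of $\R((\oxp{\no}))_\on$ at all. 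The argument that is actually needed (and that the paper uses) is that the partial sums $\sum_{i<\alpha}\m_i r_i$ are truncations of $f$ and hence simpler than $f$, giving $\alpha\leq\birth(\sum_{i<\alpha}\m_i r_i)\leq\birth(f)$, so the process halts in at most $\birth(f)$ steps. This compatibility of truncation with simplicity is a nontrivial fact (cf.\ \prettyref{thm:omega-simplicity}) and cannot be waved through as "forced by the definition."

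A second, related problem is an ordering issue in your construction. At a limit stage $\lambda$ of the extraction you must form the remainder $f-\sum_{i<\lambda}\m_i r_i$, which presupposes that the partial sum $\sum_{i<\lambda}\m_i r_i$ already denotes a surreal number; but in your plan the realization of series as surreals is only built later, in the surjectivity step. The paper avoids this circularity by constructing the map $\R((\oxp{\no}))_\on\to\no$ \emph{first} (successor stage by surreal addition, limit stage as the simplest $z$ with $z-\sum_{i<\beta}\m_i r_i\asymp\m_\beta$ for all $\beta<\lambda$, using that every nonempty convex class has a simplest element), and only then extracting normal forms to prove surjectivity. Your cut-based surjectivity argument is a reasonable variant of that same construction, but it needs to come before, not after, the normal-form extraction. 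With the order of construction fixed and the truncation-simplicity lemma supplied, the rest of your outline (order preservation, additivity, and the heavier convolution check for products) matches the intended proof.
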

	%\begin{defn}\label{defn:sums}
	\begin{proof}
		By induction on the ordinal $\alpha$, we will associate to a generalised power series $\sum_{i<\alpha} \m_i r_i\in \R((\oxp{\no}))_\on$ a surreal number $f\in \no$ and we call $\sum_{i<\alpha} \m_i r_i$ the {\bf normal form} of $f$. In this case we write $$f=\sum_{i<\alpha} \m_i r_i,$$ identifying the surreal $f$ with its normal form. The strategy is to first define a field embedding from $\R((\omega^\no))_\on$ to $\no$, and then show that it is surjective. Clearly $0$ goes to $0$ under the embedding. If $\alpha=\beta+1$, we put 
		$$\sum_{i<\alpha} \m_i r_i = \sum_{i<\beta}\m_i r_i + \m_\beta r_\beta$$
		  where ``$+$'' is the addition in $\no$; if $\alpha$ is a limit ordinal, then the image of $\sum_{i<\alpha} \m_i r_i$ in $\no$ is the simplest $z\in \no$ such that $z-\sum_{i<\beta} \m_i r_i \asymp \m_\beta$ for all $\beta<\alpha$ (we can consider $\sum_{i<\beta} \m_i r_i$ as an element of $\no$ by the inductive hypothesis).  The existence of $z$ follows from the fact that every convex class of surreal numbers has a simplest element. 
		  
%	\end{defn}
We have thus defined a field embedding from $\R((\oxp{\no}))_\on$ to $\no$. To prove that it is surjective we define the inverse map, that is, we compute the normal form of a surreal number. So let $f\in \no$. If $f=0$, then $f$ is already in normal form (represented by the empty sum). If $f\neq 0$ there is a unique monomial $\m$ and a unique real number $r\in \R^*$ such that $f=r\m + g$ with $g\prec f$. Then $r\m$ is the first term of the normal form of $f$ and to find the other terms we iterate the process. More precisely, suppose we have defined the $i$-th term $r_i \m_i$ of $f$ for each $i<\alpha$, so that we can write $$f= \sum_{i<\alpha} r_i \m_i + g_\alpha$$ with $g_\alpha\prec \m_i$ for all $i<\alpha$. If $g_\alpha=0$ we have finished. In the opposite case, the $\alpha$-th term of $f$ is first term of $g_{\alpha}$.  It can be shown that $$\alpha \leq \birth(\sum_{i<\alpha} r_i \m_i) \leq \birth(f),$$ so the process must stop in a number of steps $\leq \birth(f)$. 
\end{proof}
% of the embedding we write $\R((\oxp{\no}))_\on \subseteq \no$. It can be shown that the embedding is surjective, so we have: 
%
%\newcommand{\gonshor}{\cite[Lemma 5.3]{Gonshor1986}}
\begin{defn}\label{defn:Conway-nf}
Let  
$f=\sum_{i<\alpha}\m_i r_i \in \no$
be written normal form, as in the proof of \prettyref{thm:normal-form}. 
% where $\alpha$ is an ordinal, $(\m_i)_{i<\alpha}$ is a decreasing sequence of surreal monomials and $r_i\in \R^*$. This is the {\bf normal} form of $f$, as defined in the proof of \prettyref{thm:normal-form}. Now, 
Since the omega-map parametrises the surreal monomials, we can also write
 	\begin{equation*}\label{eq:Conway-nf}
 	f = \sum_{i<\alpha} \oxp{x_i}r_i
 	\end{equation*}
 	where $(x_i)_{i<\alpha}$  is a decreasing sequence in $\no$.  This is called the {\bf Conway normal form} of $f$. 

 Thanks to the identification $\no=\R((\M))_\on$, we can define the {\bf support} of a surreal number and the sum of a summable family of surreal numbers as in the context of generalised series. Note that the support of $0$ is the empty set.
\end{defn}

\begin{prop}[\cite{Conway76}] Conway's omega-map extends the homonymous map on the ordinal numbers. 
	If $f\in \on \subset \no$ is an ordinal, its Conway normal form $\sum_{i<\alpha}\omega^{x_i} r_i$ coincides with the Cantor normal form. So in this case $x_i\in \on$, $r_i\in \N$, and $\alpha <\omega$.     		
\end{prop}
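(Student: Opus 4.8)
The plan is to verify both that Conway's $\omega$-map agrees with ordinal exponentiation on $\on$, and then that the Conway normal form of an ordinal reduces to its Cantor normal form. I would argue by induction on simplicity (equivalently, on the ordinal). First I would show that for $\alpha \in \on \subseteq \no$, the surreal $\omega^{(\alpha)}$ defined by Definition~\ref{defn:omega-map} coincides with the ordinal $\omega^\alpha$ computed by ordinal exponentiation. The key point is that an ordinal $\alpha$ has canonical representation $\alpha = \{\beta : \beta < \alpha\}\mid\emptyset$, so Definition~\ref{defn:omega-map} gives $\omega^{(\alpha)} = \{0,\, k\,\omega^{(\beta)} : \beta<\alpha,\ k\in\N\}\mid\emptyset$. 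Using the inductive hypothesis $\omega^{(\beta)} = \omega^\beta$ (an ordinal) and the fact that surreals of the form $L\mid\emptyset$ are exactly the ordinals, this is the simplest surreal exceeding every $k\omega^\beta$ with $\beta<\alpha$; one checks, using that Hessenberg and ordinary operations agree on the relevant finite-support computations, that this is precisely the ordinal $\omega^\alpha$ (it equals $\sup_{\beta<\alpha}\omega^\beta\cdot\omega$ when $\alpha$ is a limit, and $\omega^{\beta}\cdot\omega$ when $\alpha=\beta+1$).

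Next I would handle the normal form itself. Given an ordinal $f\in\on$, write its Cantor normal form $f = \omega^{\gamma_0} n_0 + \omega^{\gamma_1} n_1 + \cdots + \omega^{\gamma_{k-1}} n_{k-1}$ with $\gamma_0 > \gamma_1 > \cdots$ in $\on$ and $n_i\in\N\setminus\{0\}$, and where the $+$ here is ordinary ordinal addition. The claim is that this is also the Conway normal form, i.e.\ that the sums coincide when $+$ is read as surreal (Hessenberg) addition and $\omega^{\gamma_i}$ as $\omega^{(\gamma_i)}$. By the first part, $\omega^{(\gamma_i)} = \omega^{\gamma_i}$ as ordinals. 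Since the Cantor normal form has strictly decreasing exponents, the Hessenberg sum $\omega^{\gamma_0} n_0 \oplus \cdots \oplus \omega^{\gamma_{k-1}} n_{k-1}$ equals the ordinary sum $\omega^{\gamma_0} n_0 + \cdots + \omega^{\gamma_{k-1}} n_{k-1}$ — this is the standard fact that Hessenberg and ordinary sum agree on ordinals presented in (jointly) decreasing Cantor normal form. Hence $f$, as a surreal, equals $\sum_{i<k}\omega^{(\gamma_i)} n_i$, a finite generalised power series with real (indeed natural-number) coefficients and strictly decreasing, ordinal-valued exponents. By the uniqueness of the normal form established in Theorem~\ref{thm:normal-form}, this \emph{is} the Conway normal form, so $x_i = \gamma_i \in\on$, $r_i = n_i\in\N$, and $\alpha = k < \omega$.

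The main obstacle, and the step I would spend the most care on, is the interface between the two additions: confirming that surreal addition restricted to $\on$ is Hessenberg sum (cited from Conway, \cite[p.~28]{Conway76}) and that Hessenberg sum coincides with ordinary ordinal sum exactly when the exponents are arranged in decreasing order, so that reading the Cantor normal form with surreal $+$ gives back the same ordinal $f$. A secondary but routine obstacle is the induction in the first part: one must check that the simplest surreal of the form $L\mid\emptyset$ above all $k\omega^{(\beta)}$ ($\beta<\alpha$, $k\in\N$) is genuinely the ordinal $\omega^\alpha$ and not merely some surreal in the same Archimedean class — this follows because $\omega^{(\alpha)}$ is a monomial and a monomial that is an ordinal power of $\omega$ must be $\omega^\alpha$, but it is worth spelling out via the limit/successor case split. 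Everything else is bookkeeping with Definition~\ref{defn:omega-map} and the uniqueness clause of Theorem~\ref{thm:normal-form}.
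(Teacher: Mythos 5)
Your proposal is correct. Note, however, that the paper offers no proof of this proposition at all --- it is stated with a bare citation to Conway --- so there is nothing in the text to compare your argument against. Your two-step route (induction on simplicity to show $\orexp{\alpha}$ equals the ordinal $\omega^{\alpha}$, using that the canonical representation of an ordinal has empty right options so that Definition~\ref{defn:omega-map} produces a surreal of the form $L\mid\emptyset$, i.e.\ an ordinal; then matching the Cantor normal form with the Conway normal form via the fact that the Hessenberg sum agrees with the ordinal sum when the exponents are arranged in decreasing order, and invoking the uniqueness clause of Theorem~\ref{thm:normal-form}) is the standard argument, and every ingredient you use is either established or explicitly quoted in the paper (surreal $+$ and $\cdot$ restrict to the Hessenberg operations on $\on$, the omega-map is increasing, ordinals are exactly the surreals of the form $\pair{L}{\emptyset}$). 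The one step you rightly flag as needing care --- that $\pair{L}{\emptyset}$ for a set of ordinals $L$ is the least ordinal strictly exceeding every element of $L$, so that the limit/successor case split really yields $\omega^{\alpha}$ and not merely something in its Archimedean class --- is handled adequately by your sketch.
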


%\section{Finite and purely infinite elements}		
%\label{sec:purely-infinite} 
The class $O(1)$ of the finite elements of $\no$ is an $\R$-vector subspace of $\no$, and as such it has many complementary spaces. We can however select one specific complement as follows. 
\begin{defn}
	\label{defn:purely-infinite}
Let $\M = \oxp{\no}$ be the class of all surreal monomials and notice that $\M^{>1} = \oxp{\no^{>0}}$ is the class of all infinite monomials. 
A surreal $x = \sum_{i<\alpha} \m_i r_i \in \R((\M))_{\on}$ is  {\bf purely infinite} if all monomials $\m_i$ in its support are $>1$ (hence infinite). Let $\no^\uparrow$ be the (non-unitary) ring of purely infinite surreals. Every $x\in \no$ can be written in a unique way in the form 
$$x = x^\uparrow + x^\circ + x^\downarrow$$
where $x^\uparrow \in \no^\uparrow$, $x^\circ \in \R$ and $x^\downarrow \prec 1$. 
This yields a direct sum decomposition $$\no = \no^\uparrow +\R + o(1)$$ of $\R$-vector spaces, where $o(1)$ is the set of elements $\prec 1$. Note that $\no^\uparrow$ is a complement of the ring of {\bf finite} elements $O(1)=\R+o(1) = \{x\in \no \mid x \preceq 1\}$. 
\end{defn}

\section{Restricted analytic functions}
In this section we show that every real analytic function $f:\R\to \R$ has a natural extension to a function $f:O(1)\to \no$ where $O(1)$ is the class of finite surreal numbers. More generally, a real analytic functions defined on an open subset $U\subseteq \R^n$, has an extension to a function $f:U+o(1)\to \no$ as in the following definition. 

\begin{defn}\label{defn:extension}
	Let $U\subseteq \R^n$ be an open set and let $f:U\to \R$ be a real analytic function. 
	Now let 
	$$\widetilde{U} = U+o(1)$$
	be the infinitesimal neighbourhood of $U$ in $\no^n$. There is a natural extension of $f$ to a function 
	$$\tilde{f}:\widetilde{U} \to \no$$
	defined as follows. For $r\in U$, let $\sum_{i\in \N^n} \frac{D^i f(r)}{i!} X^i$ be the Taylor series of $f$ around $r$, where $i=(i_1,\ldots,i_n)$ is a multi-index. Now for $\eps\in o(1)^n \subseteq \no^n$, define $\tilde{f}(r+\eps) = \sum_{i\in \N^n}\frac{D^i f(r)}{i!} \eps^i$, where the summability is ensured by \prettyref{lem:Neumann} (Neumann's lemma). Since $f$ is analytic on $U$, the function $\tilde{f}:\widetilde{U}\to \no$ extends $f$. 
\end{defn}	

\begin{rem}\label{rem:composition} We have
	$\widetilde{f\circ g}= \widetilde{f}\circ \widetilde{g}$ whenever the image of $g$ is contained in the domain of $f$.
\end{rem}

\begin{defn} Let $\R$ be the field of real numbers, let $\R_{an}$ be the expansion of $\R$ with all analytic functions restricted to some box $[-1,1]^n$, and let $\R_{an}(\exp)$ be the expansion of $\R_{an}$ with the real exponential function. Now let $T_{an}$ be the theory of $\R_{an}$, let $T_{\exp}$ be the theory of $(\R,\exp)$ and $T_{an}(\exp)$ be the theory of $\R_{an}(\exp)$.
\end{defn}
We recall that $T_{\exp}$, $T_{an}$ and $T_{an}(\exp)$ are model complete by \cite{Wilkie1996,Denef1988,Dries1994} respectively.

\begin{thm}[\cite{DriesE2001}] \label{thm:Tan} For each analytic function $f:U\to \R$ whose domain includes $[-1,1]^n$ consider the function $\tilde{f}:\widetilde{U}\to \no$ in 
	\prettyref{defn:extension} and its restriction to $[-1,1]^n$. The expansion of  $\no$ with all these restricted functions is a model of $T_{an}$. 
\end{thm}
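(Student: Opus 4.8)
The plan is to reduce the statement to the model completeness of $T_{an}$ together with a quantifier-elimination/Tarski–Seidenberg style transfer, using the fact that $\no$ with the given restricted functions is an elementary extension of $\R_{an}$ once we know it is a model of $T_{an}$. So the real content is: the extended structure on $\no$ satisfies all the axioms of $T_{an}$. The first step is to make precise what these axioms are. Since $T_{an}$ is the theory of $\R_{an}$, it suffices (by model completeness) to check that $\no$ with the restricted $\tilde f$'s satisfies a set of universal-existential sentences axiomatising $\R_{an}$; in practice the cleanest route is van den Dries's axiomatisation of $\R_{an}$ by the theory of the ring of convergent power series together with Weierstrass preparation and division, plus the ordered-field axioms and the real-closedness of $\no$ (which we already have from Conway). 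Concretely, the axioms say: (i) each $\tilde f$ restricted to $[-1,1]^n$ agrees with a convergent power series, and off the box is defined by some fixed convention (e.g.\ constantly $0$); (ii) the algebraic identities among the restricted functions that hold in $\R_{an}$ (these are captured by the ideal of relations among convergent power series); (iii) Weierstrass division. Items (i) and (ii) are ``for free'' because $\tilde f$ is defined by the \emph{same} power series used in $\R_{an}$, and any convergent-power-series identity valid on a real box remains valid after substituting infinitesimals, by Neumann's lemma (Lemma~\ref{lem:Neumann}) and the compatibility of $\tilde{\cdot}$ with composition (Remark~\ref{rem:composition}). The substantive axiom is Weierstrass division.

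First I would set up the extension carefully: fix, for each analytic $f$ defined on a neighbourhood of $[-1,1]^n$, the restricted function $F_f:\no^n\to\no$ equal to $\tilde f$ on $[-1,1]^n+o(1)$ intersected with $[-1,1]^n$-translates (i.e.\ on the set of surreal tuples infinitely close to a point of the closed box) and equal to $0$ elsewhere, matching the convention used for $\R_{an}$. The key observation is the \emph{transfer of identities}: if $p(x,y_1,\dots,y_k)$ is a term in the language of ordered rings and $p(x,f_1(x),\dots,f_k(x))\equiv 0$ on $[-1,1]^n$ in $\R_{an}$, then the corresponding identity holds in $\no$ for $x$ infinitely close to the box. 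This is because both sides, expanded around a real point $r\in[-1,1]^n$, are given by convergent power series in $\eps$ with real coefficients, and two such series that agree on a real box have the same coefficients, hence agree after plugging in $\eps\in o(1)^n$; the interchange of summation needed is licensed by Lemma~\ref{lem:Neumann}, and Remark~\ref{rem:composition} handles nested compositions.

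Next I would verify Weierstrass preparation/division in $\no$. Given $F_f$ for an $f$ that is regular of order $d$ in the last variable at the origin, one wants to write, for surreal $x=(x',x_n)$ infinitely close to $0$, $F_f(x) = U(x)\cdot\big(x_n^d + a_{d-1}(x')x_n^{d-1}+\dots+a_0(x')\big)$ with $U$ a unit and the $a_i$ infinitesimal, where $U$ and the $a_i$ are again among the restricted analytic functions of the structure (these are exactly the functions produced by Weierstrass preparation in $\R_{an}$, restricted to a smaller box). The point is that the \emph{same} real-analytic $U$ and $a_i$ that witness preparation in $\R_{an}$ continue to witness it in $\no$, because the identity $f = U\cdot(\text{Weierstrass polynomial})$ is a convergent-power-series identity on a real box and so transfers by the mechanism of the previous paragraph; one only needs that the real functions involved are themselves in the language, which holds by closing the family of restricted functions under the Weierstrass operations (or by appealing to the specific axiomatisation of $\R_{an}$ in \cite{Dries1994}). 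Division by the Weierstrass polynomial, and hence general division, then follows formally. Finally, having checked that $(\no, +, \cdot, <, (F_f))$ satisfies the axioms, model completeness of $T_{an}$ gives that it is elementarily equivalent to $\R_{an}$, i.e.\ a model of $T_{an}$.

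The main obstacle is the transfer of Weierstrass division: one must be sure that the power-series identities encoding preparation, which hold on a genuine real neighbourhood of $0$, survive the passage to surreal infinitesimals, and that the auxiliary functions (the unit and the coefficients of the Weierstrass polynomial) are themselves among the restricted analytic functions with which we expanded $\no$ — i.e.\ that the family is closed under the operations used in the chosen axiomatisation of $T_{an}$. Everything else is bookkeeping: the ordered-field and real-closed axioms come from Conway's theorem, the convergent-power-series identities transfer via Neumann's lemma and Remark~\ref{rem:composition}, and the off-box behaviour is matched by definitional fiat.
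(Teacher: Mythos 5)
Your proposal is correct and follows essentially the same route as the paper, which for this theorem gives only the one-line indication that ``the proof is based on the axiomatisation of $T_{an}$ given in \cite{Dries1994}''; your elaboration --- transferring the convergent-power-series identities and the Weierstrass preparation/division data from $\R_{an}$ to $\no$ via Taylor expansion at real points, Neumann's lemma, and compatibility with composition --- is exactly the content of that cited argument as carried out in \cite{DriesE2001}.
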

The proof is based on the axiomatisation of $T_{an}$ given in \cite{Dries1994}.
%The proof in \cite{Dries1994} is given in the case of generalised power series fields $\R((\mathfrak{N}))$, but the same proof applies to $\no$ thanks to the identification $\no=\R((\M))_\on$, see \cite{DriesE2001}. 

\section{Exponentiation}\label{sec:exp}
The next goal is to expand $\no$ to a model of $T_{an}(\exp)$
through the introduction of an exponential function. We already know how to extend the real exponential function $\exp:\R\to \R$ to a function $\exp:O(1)\to \no$ using \prettyref{defn:extension}, so the problem is how to define $\exp(x)$ when $x\in \no$ is infinite. 
Since we want the surreal $\exp$ to be an isomorphism of ordered groups from $(\no,+,<)$ to $(\no^{>0},\cdot, <)$, the image $\exp(\no^\uparrow)$ of the class of purely infinite elements (\prettyref{defn:purely-infinite}) must be a complement of $O(1)^{>0}$ in the multiplicative group $\no^{>0}$. We already have a natural choice for such a complement, namely the class $\M= \omega^\no$ of surreal monomial. 
It is then natural to require that 
$$\exp(\no^\uparrow) = \M,$$ 
so in particular $\exp(\M^{>1}) \subset \M$, i.e. the class of infinite monomials is closed under $\exp$. 
%Every infinite element is in the same Archimedean class of some element of $\M^{>1}$. It is then natural to require that each infinite element is in the same multiplicative Archimedean  class of some element of $\exp(\M^{>1})$. We shall arrange so that $\exp(\M^{>1}) = \omega^{\omega^\no} \subset \omega^\no = \M$.  
To achieve this goal, for $x>0$ we define $\exp(\omega^x) = \omega^{\omega^{g(x)}}$ for a suitable increasing bijection $g:\no^{>0}\to \no$. To ensure that $\exp$ grows faster than any polynomial, the function $g$ is chosen in such a way that for $\m\in \M^{>1}$, we have $\exp(\m ) > \m^n$ for all $n\in \N$. This translates into the condition  $\omega^{g(x)} \succ x$, so we define $g$ as the simplest increasing map with this property. The following definition formalises the idea.   
%We define $g$ and its inverse $h:\no\to \no^{>0}$ as follows. 

\def\Gonshor169{\citet[p. 169]{Gonshor1986}}
\begin{defn}[\Gonshor169]\label{defn:g-h}
	The {\bf index} of $x\in \no^{\neq 0}$ is the unique $c= \ind(x)$ such that $x\asymp \omega^c$. In particular $\ind(\omega^c) = c$. Define $g:\no^{>0}\to \no$ by the recursive equation 
	\begin{equation}
	g(x) =
	\{\ind(x), g(x')\} \mid  \{g(x'')\}	 
	\end{equation}
	where $x'$ ranges over the left options of $x$ and $x''$ ranges over its right options. 
\end{defn} 
\begin{rem}
	 It can be proved that $g$ is an increasing bijection $g: \no^{>0}\to \no$ (with inverse given by \prettyref{defn:h}). Since $g(x) > \ind (x)$, we have $\omega^{g(x)} \succ x$.
\end{rem}
The function $g$ can be difficult to compute, but \citet{Gonshor1986} showed that $g(n) = n$ for every $n\in \N$. More generally, if $\alpha$ is an ordinal, then $g(\alpha) = \alpha$ unless there is an epsilon number $\eps$ such that $\eps \leq \alpha < \eps + \omega$, in which case $g(\alpha) = \alpha+1$ \cite[Thm. 10.14]{Gonshor1986}. 
\begin{defn}\label{defn:exp}
	Given $x\in \no$, we write 
	$x = x^\uparrow + x^\circ + x^\downarrow$ as in \prettyref{defn:purely-infinite} and define 
	$\exp(x) = \exp(x^\uparrow)\exp(x^\circ) \exp(x^\downarrow)$
	where the factors on the right-hand side are defined as follows: 
	\begin{enumerate}
		\item The restriction of $\exp$ to $\R$ coincides with the real exponential function. 
		\item $\exp(\eps) = \sum_{n\in \N} \frac{\eps^n}{n!}$ for $\eps\prec 1$.
		\item $\exp(\omega^x) = \omega^{\omega^{g(x)}}$ for  $x>0$. This defines the restriction of $\exp$ to the class $\M^{>1}$ of all infinite monomials. 
		\item The extension to $\no^\uparrow$ is given by the formula $$\exp(\sum_{i<\alpha} \omega^{x_i} r_i) = \omega^{\sum_{i<\alpha} \omega^{g(x_i)} r_i}$$ where $x = \sum_{i<\alpha} \omega^{x_i} r_i\in \no^\uparrow$ (i.e. $x_i>0$ for all $i$). 
	\end{enumerate}
\end{defn}
  The above definition is equivalent to the one of \citeauthor{Gonshor1986} \cite[Thms. 10.2, 10.3, 10.13]{Gonshor1986}. 
  Note that  (1) and (2) agree with \prettyref{defn:extension}. 
  % while (4) is motivated by idea that both $\exp$ and the omega-map take infinite sums to infinite products. 
  \begin{thm}
  	\label{thm:Gonshor} We have: 
  	\begin{enumerate}
  		%		\item For $x\in \R$, $\exp(x) = e^x$ (real exponentiation with base $e$);
  		%	\item $\exp$ is a surjective group homomorphism from $(\no,+)$ to $(\no^{>0},\cdot)$; 
  		\item $\exp$ is a group isomorphism from $(\no,+,<)$ to $(\no^{>0},\cdot,<)$; 
  		\item $\exp$ extends the real exponential function; 
  		\item $\exp(\eps) = \sum\limits_{n=0}^\infty \frac{\eps^n}{n!}$ for $\eps\prec 1$; 
  		\item   $\exp(\no^\uparrow)$ is the class $\M$ of surreal monomials;
  		\item $\exp(x) > x^n$ for all $x\in \no^{>\N}$ and all $n\in \N$. 
  	\end{enumerate}
  \end{thm}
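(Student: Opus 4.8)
The plan is to verify the five properties of \prettyref{thm:Gonshor} by unpacking \prettyref{defn:exp} and reducing each claim to a statement about the omega-map, the index function, and the map $g$ from \prettyref{defn:g-h}. Properties (2) and (3) are immediate from clauses (1) and (2) of \prettyref{defn:exp}, since those clauses literally define $\exp$ on $\R$ and on $o(1)$; the only thing to note is consistency with \prettyref{defn:extension}, which was already observed after the definition. Property (4) is also nearly immediate: clause (4) of \prettyref{defn:exp} sends $\sum_{i<\alpha}\omega^{x_i}r_i \in \no^\uparrow$ to $\omega^{\sum_{i<\alpha}\omega^{g(x_i)}r_i}$; since $g:\no^{>0}\to\no$ is a bijection and the exponent $\sum_{i<\alpha}\omega^{g(x_i)}r_i$ ranges exactly over all of $\no$ as $x$ ranges over $\no^\uparrow$, the image $\exp(\no^\uparrow)$ is exactly $\omega^\no = \M$. (One should check that $\sum_i \omega^{g(x_i)}r_i$ is a legitimate Conway normal form, i.e.\ that $x\mapsto g(x)$ preserves the strict decreasing order of the exponents — this is where we invoke that $g$ is increasing.)

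The substantive work is property (1): that $\exp$ is an order isomorphism $(\no,+,<)\to(\no^{>0},\cdot,<)$. I would prove this in three stages. First, the \emph{morphism} property $\exp(x+y)=\exp(x)\exp(y)$: writing $x+y$ via its decomposition into purely infinite, real, and infinitesimal parts and using that $\no^\uparrow$, $\R$, $o(1)$ are additive subgroups, it suffices to check multiplicativity separately on each piece and then cross-compatibility. On $\no^\uparrow$ it follows from clause (4) together with $\omega^{a+b}=\omega^a\omega^b$ (property (3) of the omega-map theorem) and additivity of $\sum_i\omega^{g(x_i)}r_i$ in the coefficients $r_i$; on $\R$ it is the classical functional equation of $\exp$; on $o(1)$ it is the formal identity $(\sum\eps^n/n!)(\sum\delta^n/n!)=\sum(\eps+\delta)^n/n!$, which holds by Neumann's lemma (\prettyref{lem:Neumann}) and the binomial theorem rearrangement. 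Second, \emph{injectivity}: since $\exp$ is a group homomorphism it is enough to show $\exp(x)=1\Rightarrow x=0$, and since the three factors $\exp(x^\uparrow)\in\M$, $\exp(x^\circ)\in\R^{>0}$, $\exp(x^\downarrow)\in 1+o(1)$ lie in complementary pieces of the multiplicative decomposition $\no^{>0}=\M\cdot\R^{>0}\cdot(1+o(1))$, each factor must be $1$, forcing $x^\uparrow=x^\circ=x^\downarrow=0$. Third, \emph{surjectivity onto $\no^{>0}$ and monotonicity}: given $y>0$, write $y=\m\cdot r\cdot(1+\eps)$ with $\m\in\M$, $r\in\R^{>0}$, $\eps\prec1$; by property (4) there is $u\in\no^\uparrow$ with $\exp(u)=\m$, by surjectivity of the real $\exp$ there is $v\in\R$ with $\exp(v)=r$, and $\exp(\log(1+\eps))=1+\eps$ where $\log(1+\eps)=\sum_{n\geq1}(-1)^{n+1}\eps^n/n$ is again legitimate by Neumann; then $\exp(u+v+\log(1+\eps))=y$. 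Monotonicity (strict increasingness) then follows because a homomorphism of ordered groups that is a bijection and sends some positive element to an element $>1$ (e.g.\ $\exp(1)=e>1$, or $\exp(\omega)=\omega^{\omega^{g(1)}}=\omega^\omega>1$) is automatically order-preserving — alternatively check directly on each of the three pieces.

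Property (5), $\exp(x)>x^n$ for all $x\in\no^{>\N}$ and $n\in\N$, I would reduce to the case of monomials. Write $x\asymp\omega^c$ with $c=\ind(x)>0$ (the hypothesis $x>\N$ forces $c>0$); then $x^n\asymp\omega^{cn}$, while $\exp(x)\asymp\exp(\omega^c)=\omega^{\omega^{g(c)}}$ by the morphism property applied to $x=\omega^c\cdot(\text{finite unit})$ — more precisely $\exp(x)=\exp(\omega^c r(1+\eps))$; one handles the finite factor by noting it only changes $\exp(x)$ by a finite positive factor and by a monomial $\exp(\omega^c(r-1))$-type correction of strictly smaller index. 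The key inequality is then $\omega^{g(c)}\succ\omega^{cn}$ for all $n$, i.e.\ $g(c)>cn$ for all $n$ (using that $y\mapsto\omega^y$ is increasing and compares Archimedean classes), which in turn follows once we know $g(c)$ is \emph{infinitely larger} than $c$ — and indeed from the remark after \prettyref{defn:g-h} we have $\omega^{g(x)}\succ x$, equivalently $g(x)>\ind(x)$, but to get the full strength $g(c)>cn$ we need $g(c)\succ c$, which should be read off the defining recursion for $g$ by a birthday/induction argument (the left option $\ind(x)$ in the definition of $g(x)$ forces $g(x)$ to dominate every finite multiple of $\ind(x)$).

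I expect the main obstacle to be the bookkeeping in property (1) — specifically, verifying that the infinitary rearrangements (the functional equation of $\exp$ on $o(1)$ and the construction of $\log(1+\eps)$ for surjectivity) are justified by Neumann's lemma and respect summability, and checking carefully that clause (4) of \prettyref{defn:exp} genuinely defines a well-formed map on $\no^\uparrow$ that is additive in the normal-form coefficients. The monotonicity and the reduction in (5) from arbitrary $x$ to the leading monomial are conceptually routine but require care about how finite and lower-order corrections interact with Archimedean classes. Since the excerpt states that \prettyref{defn:exp} is equivalent to Gonshor's, an alternative (and perhaps cleaner) route is to cite Gonshor's development \cite[Chapter 10]{Gonshor1986} for parts of (1), but I would prefer to give the self-contained argument above, using only the omega-map theorem, \prettyref{defn:g-h}, and Neumann's lemma.
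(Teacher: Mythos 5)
Your treatment of (1)--(4) is fine and matches the paper, which disposes of these items with the remark that they ``follow easily from the definitions''; the extra detail you supply (multiplicativity checked piecewise on $\no^\uparrow$, $\R$ and $o(1)$, injectivity via the decomposition $\no^{>0}=\M\cdot\R^{>0}\cdot(1+o(1))$, surjectivity via $\log(1+\eps)$) is all correct and consistent with the definitions.

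The problem is in (5), the one item the paper actually proves. You reduce $\exp(\omega^c)>\omega^{cn}$ to the inequality $\omega^{g(c)}\succ\omega^{cn}$, i.e.\ $g(c)>cn$ for all $n\in\N$, equivalently $g(c)\succ c$. That inequality is false: Gonshor shows $g(1)=1$, so $g(1)>1\cdot 2$ already fails. You have applied the monotonicity of the omega-map one time too many. Since $\exp(\omega^c)=\omega^{\omega^{g(c)}}$ and the omega-map is increasing, the comparison $\omega^{\omega^{g(c)}}>\omega^{cn}$ is equivalent to $\omega^{g(c)}>cn$ --- a comparison between the monomial $\omega^{g(c)}$ and the surreal $cn$, not between $\omega^{g(c)}$ and $\omega^{cn}$. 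This weaker inequality needs only the fact you already quoted, $g(c)>\ind(c)$, via $\omega^{g(c)}\succ\omega^{\ind(c)}\asymp c$, hence $\omega^{g(c)}>cn$ for all $n$; that is precisely the paper's argument. Your attempted repair --- extracting $g(c)\succ c$ from the recursion on the grounds that the left option $\ind(x)$ ``forces $g(x)$ to dominate every finite multiple of $\ind(x)$'' --- does not work either: a left option $\ind(x)$ only forces $g(x)>\ind(x)$, and even $g(x)\succ\ind(x)$ would not yield $g(x)\succ x$ (take $x=\omega$: $\ind(x)=1$ and $g(x)=\omega\asymp x$). Replacing your displayed key inequality by $\omega^{g(c)}>cn$ repairs the proof; the remaining reduction from a general $x\in\no^{>\N}$ to its leading monomial is routine, as you and the paper both indicate.
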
	
  \begin{proof}
  	All points follow easily from the definitions except possibly (5). By \prettyref{defn:g-h} we have $g(z) > \ind(z)$ for all $z\in \no^{>0}$. We thus obtain $\omega^{g(z)} \succ \omega^{\ind(z)} \asymp z$. It follows that for $z>0$ we have $\exp(\omega^z) = \omega^{\omega^{g(z)}} > \omega^{zn}$ for all $n\in \N$, so (5) holds whenever $x = \omega^z$ is an infinite monomial and the general case easily follows.  
  \end{proof}
  
\section{Logarithm}	
The surreal logarithm $\log:\no^{>0}\to \no$ is the inverse of $\exp:\no\to \no^{>0}$. To be able to give a direct definition we need an auxiliary function. 
\begin{defn}\label{defn:h}
	Define 
	$h:\no\to \no^{>0}$  by 
	\begin{equation}
	h(x) =
	\{0, h(x')\} \mid  \{h(x''), \oxp{x}/2^n\}	 
	\end{equation}
	where $x'$ ranges over the left-options of $x$, $x''$ ranges over its right-options, while $n$ ranges over the natural numbers. 	
\end{defn}

\begin{rem}\label{rem:h}
	\citet{Gonshor1986} showed that 
	$h$ is the inverse of the function $g:\no^{>0}\to \no$ in \prettyref{defn:g-h}.
\end{rem}
	
	We are now ready to define $\log:\no^{>0}\to \no$. 
	\begin{defn}\label{defn:log}
	Given $x\in \no^{>0}$, we write $x = r\m(1+\eps)$, with $r\in \R^{>0}$, $\m\in \M$ and $\eps \prec 1$, and we define $\log(x) = \log(r) + \log(\m) + \log(1+\eps)$, where the right-hand side is defined as follows. 
	\begin{enumerate}
		\item The restriction of $\log$ to $\R^{>0}$ agrees with the natural logarithm on $\R$;
	\item 	$\log(1+\eps) = \sum_{n=1}^\infty \frac{(-1)^{n+1}}{n} \eps^n$ for $\eps\prec 1$;
	\item $\log(\omega^{\omega^x}) = \omega^{h(x)}$;
		\item $\log(\oxp{\sum_{i<\alpha} \oxp{x_i} r_i})=\sum_{i<\alpha}\oxp{h(x_i)}r_i$. 
	\end{enumerate}

\end{defn}	
	
	The following proposition is an easy consequence of Remarks \ref{rem:composition} and \ref{rem:h}.
	\begin{prop}
		$\log:\no^{>0}\to \no$ is the inverse of $\exp:\no\to \no^{>0}$.  
	\end{prop}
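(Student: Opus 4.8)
The plan is to verify that the maps $\exp$ and $\log$ defined in \prettyref{defn:exp} and \prettyref{defn:log} are mutually inverse by checking the composition $\log\circ\exp$ (and $\exp\circ\log$) piece by piece on the canonical decompositions used in the two definitions. Since $\exp$ is already known to be a group isomorphism $(\no,+,<)\to(\no^{>0},\cdot,<)$ by \prettyref{thm:Gonshor}(1), it suffices to produce \emph{any} right inverse, or equivalently to show $\log(\exp(x))=x$ for all $x\in\no$; the surjectivity of $\exp$ then upgrades this to a two-sided inverse. So I would concentrate on the identity $\log\circ\exp=\mathrm{id}_{\no}$.

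First I would reduce to the three ``atomic'' cases via the decomposition $x=x^\uparrow+x^\circ+x^\downarrow$. Because $\exp$ is a homomorphism, $\exp(x)=\exp(x^\uparrow)\exp(x^\circ)\exp(x^\downarrow)$, and the three factors lie respectively in $\M^{>0}$ (a monomial), in $\R^{>0}$, and in $1+o(1)$; hence this product is exactly the factorisation $r\m(1+\eps)$ that \prettyref{defn:log} feeds into $\log$, with $r=\exp(x^\circ)$, $\m=\exp(x^\uparrow)$, $1+\eps=\exp(x^\downarrow)$. Thus $\log(\exp(x))=\log(\exp(x^\uparrow))+\log(\exp(x^\circ))+\log(\exp(x^\downarrow))$ and it remains to treat each summand. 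For the real part, clauses (1) of the two definitions give $\log(\exp(x^\circ))=x^\circ$ since $\log$ and $\exp$ are mutually inverse on $\R$. For the infinitesimal part, \prettyref{rem:composition} applied to the real-analytic functions $t\mapsto e^t$ and $t\mapsto\log(1+t)$ gives $\widetilde{\log(1+\cdot)}\circ\widetilde{\exp}=\mathrm{id}$ on $o(1)$, so $\log(\exp(x^\downarrow))=x^\downarrow$; this is precisely where clause (2) of each definition is used.

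The substantive case is the purely infinite part, and this is the step I expect to be the main obstacle. Writing $x^\uparrow=\sum_{i<\alpha}\oxp{x_i}r_i$ with all $x_i>0$, \prettyref{defn:exp}(4) gives $\exp(x^\uparrow)=\oxp{\sum_{i<\alpha}\oxp{g(x_i)}r_i}$, and this is a monomial $\m$ whose exponent $\sum_{i<\alpha}\oxp{g(x_i)}r_i$ is again purely infinite (each $g(x_i)>\ind(x_i)\ge 0$, so in fact $g(x_i)>0$). Now \prettyref{defn:log}(4) applies to $\log(\m)=\log\bigl(\oxp{\sum_{i<\alpha}\oxp{g(x_i)}r_i}\bigr)=\sum_{i<\alpha}\oxp{h(g(x_i))}r_i$, and by \prettyref{rem:h} we have $h\circ g=\mathrm{id}_{\no^{>0}}$, so this collapses to $\sum_{i<\alpha}\oxp{x_i}r_i=x^\uparrow$. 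One must double-check the bookkeeping: that the exponent appearing inside $\log$ in clause (4) is genuinely of the form required (purely infinite, with a decreasing enumeration of monomials $\oxp{g(x_i)}$, which holds because $g$ is increasing), and that clauses (3) and (4) of the two definitions are compatible on single monomials $\oxp{\oxp{x}}$, i.e.\ $\log(\oxp{\oxp{x}})=\oxp{h(x)}$ matches $\exp(\oxp{x})=\oxp{\oxp{g(x)}}$ under $g\leftrightarrow h$. All of this is bookkeeping once $g\circ h=h\circ g=\mathrm{id}$ and $\widetilde{\exp},\widetilde{\log}$ being inverse on $o(1)$ are in hand, which is why the proposition is billed as an easy consequence of Remarks \ref{rem:composition} and \ref{rem:h}; the only real care needed is in matching the normal-form data across the two definitions. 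Finally, having shown $\log\circ\exp=\mathrm{id}_{\no}$ and knowing $\exp:\no\to\no^{>0}$ is a bijection, I conclude $\exp\circ\log=\mathrm{id}_{\no^{>0}}$ as well, which completes the proof.
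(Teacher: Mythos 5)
Your proof is correct and follows exactly the route the paper intends: the paper gives no written proof beyond declaring the proposition ``an easy consequence of Remarks \ref{rem:composition} and \ref{rem:h}'', and your argument is precisely the expansion of that --- decompose $x=x^\uparrow+x^\circ+x^\downarrow$, match the factorisation $r\m(1+\eps)$, and use $h=g^{-1}$ on the monomial part and \prettyref{rem:composition} on the infinitesimal part. One parenthetical slip worth noting: for $x_i>0$ one need not have $\ind(x_i)\ge 0$ (e.g.\ $\ind(\omega^{-1})=-1$), so $g(x_i)$ can be negative and the exponent $\sum_{i<\alpha}\oxp{g(x_i)}r_i$ need not be purely infinite; but this is harmless, since \prettyref{defn:log}(4) applies to any monomial $\orexp{y}$ with $y$ written in normal form, no positivity of the exponents being required.
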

		In particular, since $\exp(\no^\uparrow) = \M$, we have $\log(\M) = \no^\uparrow$.
	
\section{Ressayre's axioms}
To prove that $\no$ is an elementary extension of $\R_{\exp}$ we need the following result of \citet{Ressayre1993}, which can also be derived from the axiomatisation of $T_{an}(\exp)$ given by \citet*{DriesMM2001}. 
 \begin{thm}[\cite{Ressayre1993,DriesMM2001}] \label{thm:Ressayre}A real closed ordered field $K$ endowed with an  isomorphism of ordered groups $E: (K,+,<)\to (K^{>0},\cdot,<)$ is a model of $T_{\exp}$ if and only if the following axioms hold: 
 \begin{enumerate}
 	\item[(i)] $E(x) >x^n$ for all $x\in K^{>\N}$ and all $n\in \N$ ;
\item[(ii)]  the restriction of $E$ to $[0,1]$ makes $K$ into a model of the theory of $(\R,\exp\rest [0,1])$. 
\end{enumerate}
\end{thm}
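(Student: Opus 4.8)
The plan is to treat the two implications separately: the left-to-right implication is routine, while the right-to-left implication is the substantial content (this is Ressayre's theorem, and it can alternatively be extracted from the analysis of logarithmic--exponential series of van den Dries, Macintyre and Marker).

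For the forward implication, suppose $K\models T_{\exp}$, so that $(K,E)\equiv(\R,\exp)$. Axiom~(i) is a scheme of first-order sentences, one for each $n\in\N$, each expressing the elementary truth that $\exp$ eventually dominates $x^{n}$; each holds in $(\R,\exp)$ and hence in $K$. For axiom~(ii), the restricted function $\exp\rest[0,1]$ is $\emptyset$-definable in $(\R,\exp)$ — say as the map sending $x$ to $\exp(x)$ when $0\le x\le 1$ and to $0$ otherwise — so $\mathrm{Th}(\R,\exp\rest[0,1])$ is interpretable in $T_{\exp}$; running the same interpretation inside $K$ and invoking $(K,E)\equiv(\R,\exp)$ gives $(K,E\rest[0,1])\equiv(\R,\exp\rest[0,1])$.

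For the reverse implication, let $(K,E)$ satisfy the real closed field axioms together with~(i) and~(ii); I must show $K\models T_{\exp}$. Since $T_{\exp}$ is complete, it suffices to prove $(K,E)\equiv(\R,\exp)$, and I would obtain this from two ingredients: Wilkie's model completeness of $T_{\exp}$ (recalled above) and an \emph{embedding theorem} asserting that every model of the present axiom system embeds, as a structure with a distinguished group isomorphism $E$, into a model of $T_{\exp}$. Granting these, one argues as follows. The axiom system itself — real closed field axioms, ``$E$ a group isomorphism of $(K,+,<)$ onto $(K^{>0},\cdot,<)$'', the universal growth scheme~(i), and the (o-minimal, model complete) theory of the restricted exponential appearing in~(ii), with embeddings automatically respecting $E$ — can be shown, using~(i) crucially, to be model complete. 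Given two of its models $(K_{1},E_{1}),(K_{2},E_{2})$, the embedding theorem places each inside a model $(L_{i},F_{i})\models T_{\exp}$; since $T_{\exp}$ is complete, $(L_{1},F_{1})\equiv(L_{2},F_{2})$, so they have a common elementary extension $(L,F)\models T_{\exp}$ into which both $K_{i}$ embed. Thus the axiom system has the joint embedding property, and being model complete it is complete; as $(\R,\exp)$ is one of its models (by the forward implication), the axiom system coincides with $T_{\exp}$, whence $K\models T_{\exp}$.

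The real work — and the step I expect to be the main obstacle — is the embedding theorem. Following Ressayre, one first uses~(ii) and real closedness to pin $E$ down to the Taylor series $\sum_{n}\eps^{n}/n!$ on the infinitesimals and to the genuine real exponential on the real closure of $\Q$ inside $K$, and one uses the growth axiom~(i) to force how $E$ must act on the value group of the natural valuation, producing a compatible logarithm on monomials. Iterating an ``exponential/logarithmic closure'' construction one lands inside a Hahn field $\R((\mathfrak{N}))$ carrying a Gonshor-type exponential — equivalently, inside an elementary extension of the field $\T$ of logarithmic--exponential series — where the assertion that $T_{\exp}$ holds is precisely the theorem of van den Dries, Macintyre and Marker (itself resting on Wilkie's model completeness). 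Carrying out this closure and verifying that the resulting canonical models satisfy $T_{\exp}$ is what makes the statement genuinely deep; in a survey of this kind it is legitimate to invoke it as a black box rather than reprove it.
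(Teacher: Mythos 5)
This theorem is stated in the paper as a quoted result, credited to Ressayre and to van den Dries--Macintyre--Marker, and the paper supplies no proof of it; so there is no in-paper argument to measure your proposal against. Judged on its own terms, your forward implication is complete and correct: axiom (i) transfers because, for each fixed $n$, ``$\exp(x)>x^{n}$ for all $x$ beyond an explicit natural number'' is a first-order sentence true in $(\R,\exp)$, and every element of $K^{>\N}$ lies beyond that bound; and axiom (ii) transfers because $\exp\rest[0,1]$ is $\emptyset$-definable in $(\R,\exp)$. Your reduction of the reverse implication to (a) model completeness of the axiom system and (b) an embedding theorem into models of $T_{\exp}$, followed by the standard ``model complete $+$ joint embedding $\Rightarrow$ complete'' argument, is also logically sound and is consistent with how the cited sources actually proceed.

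The caveat is that (a) and (b) \emph{are} Ressayre's theorem: essentially all of the mathematical content of the statement is concentrated in the two ingredients you invoke as black boxes, and your closing paragraph sketches rather than executes the exponential--logarithmic closure construction that delivers them. You are explicit and honest about this, and since the survey itself cites the entire theorem without proof, deferring to \cite{Ressayre1993} and \cite{DriesMM2001} for those ingredients is an entirely reasonable choice here; just be aware that what you have written is a correct architecture for the proof together with a complete proof of the easy direction, not a self-contained proof of the hard one.
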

\begin{rem}
If $(K, E\rest [0,1])$ is elementary equivalent to $(\R,\exp \rest [0,1])$, then these two structures have isomorphic ultrapowers $K^*$ and $\R^*$. Since the ultrapower construction yields elementary extensions in any language, we can use the isomorphism to expand $K^*$ to a model of $T_{an}$.  It follows that any structure $(K,E)$ satisfying the hypothesis of \prettyref{thm:Ressayre} (Ressayre's axioms) has an elementary extension which can be expanded to a model of $T_{an}(\exp)$. 
\end{rem}

\begin{cor}[\citet{DriesE2001}]\label{cor:Dries2001}
	The field $\no$ of surreal numbers, with Gonshor's $\exp$ and the natural interpretation of the restricted analytic function, is a model of $T_{an}({\exp})$. 
	% hence an elementary superstructure of the real field with the exponential function and all restricted analytic functions.   
	%The same holds if we add to the language symbols for all the restricted analytic functions.  
\end{cor}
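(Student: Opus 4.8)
The plan is to reduce to Ressayre's characterisation (\prettyref{thm:Ressayre}) together with the fact (\prettyref{thm:Tan}) that $\no$, equipped with the restricted analytic functions of \prettyref{defn:extension}, is a model of $T_{an}$; the extra input needed to pass from $T_{\exp}$ to $T_{an}(\exp)$ is supplied by the Remark following \prettyref{thm:Ressayre} (or, equivalently, by the van den Dries--Macintyre--Marker axiomatisation \cite{DriesMM2001}).

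First I would verify that $(\no,\exp)$ satisfies the hypotheses of \prettyref{thm:Ressayre}. The field $\no$ is real closed (Conway), and by \prettyref{thm:Gonshor}(1) the map $\exp$ is an isomorphism of ordered groups $(\no,+,<)\to(\no^{>0},\cdot,<)$; axiom (i) is precisely \prettyref{thm:Gonshor}(5). The one point requiring argument is axiom (ii), namely that $\exp\rest[0,1]$ makes $\no$ into a model of the theory of $(\R,\exp\rest[0,1])$. Since $\exp$ is entire, its restriction to $[-1,1]$ is one of the restricted analytic functions of \prettyref{defn:extension}; hence by \prettyref{thm:Tan} the expansion $\no_{an}$ of $\no$ by all restricted analytic functions is a model of $T_{an}$. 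But $T_{an}$ is by definition the complete theory of $\R_{an}$, so $\no_{an}\equiv\R_{an}$, and taking reducts to the ordered ring language augmented by the single symbol for $\exp\rest[-1,1]$ gives $(\no,\exp\rest[-1,1])\equiv(\R,\exp\rest[-1,1])$; a fortiori $(\no,\exp\rest[0,1])\equiv(\R,\exp\rest[0,1])$, which is axiom (ii). Thus \prettyref{thm:Ressayre} yields $(\no,\exp)\models T_{\exp}$.

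To upgrade this to $\no_{an}(\exp)\models T_{an}(\exp)$, I would invoke the Remark after \prettyref{thm:Ressayre}: since $(\no,\exp)$ satisfies Ressayre's axioms, it has an elementary extension $(N,E)\succ(\no,\exp)$ in the language of ordered exponential fields whose expansion $(N_{an},E)$ by restricted analytic functions is a model of $T_{an}(\exp)$. One then checks that the restricted analytic functions of $N_{an}$ restrict, on the subfield $\no\subseteq N$, to the functions of \prettyref{defn:extension}: both make $\R$ into an elementary copy of $\R_{an}$, and on the infinitesimal neighbourhood of each box $[-1,1]^n$ they are determined by Taylor expansion at real points together with the $o(1)$-size of the remainders (model completeness of $T_{an}$ being used for the uniqueness). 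Hence $\no_{an}(\exp)$ is a substructure of $(N_{an},E)$ containing $\R_{an}(\exp)$, and since $T_{an}(\exp)$ is model complete and $\no_{an}\models T_{an}$ already, the inclusion is elementary, giving $\no_{an}(\exp)\models T_{an}(\exp)$. As a cross-check one can bypass $N$ entirely and appeal to the DMM axiomatisation of $T_{an}(\exp)$: it is enough that $\no_{an}\models T_{an}$ (\prettyref{thm:Tan}), that $\exp$ is an ordered group isomorphism extending the restricted analytic $\exp\rest[-1,1]$ (\prettyref{thm:Gonshor}(1) and \prettyref{defn:exp}, \prettyref{defn:extension}), and that $\exp(x)>x^n$ for infinite $x$ (\prettyref{thm:Gonshor}(5)).

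The verifications against \prettyref{thm:Gonshor} and the extraction of axioms (i)--(ii) are routine. The main obstacle is the last paragraph: one must genuinely see that Gonshor's $\exp$ is compatible not merely with the ordered field structure of $\no$ but with the entire restricted analytic structure, i.e.\ that combining $\exp$ with all the restricted analytic functions introduces no new first-order behaviour. This is exactly what the ultrapower reduction to $\R_{an}(\exp)$ in the Remark --- equivalently, model completeness of $T_{an}(\exp)$ applied at the abstract DMM axioms --- is designed to settle, and it is the step I would write out carefully.
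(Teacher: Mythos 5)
Your overall strategy coincides with the paper's: verify Ressayre's hypotheses via \prettyref{thm:Gonshor} (points (1) and (5)) and \prettyref{thm:Tan}, and then pass from $T_{\exp}$ to $T_{an}(\exp)$. The verification of axioms (i) and (ii) is correct; in particular your use of \prettyref{thm:Tan} plus reduct-taking to get $(\no,\exp\rest[0,1])\equiv(\R,\exp\rest[0,1])$ is exactly the intended argument, resting on the paper's observation that clauses (1) and (2) of \prettyref{defn:exp} agree with \prettyref{defn:extension}.

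The weak point is your primary route for the upgrade to $T_{an}(\exp)$, namely passing to the elementary extension $(N,E)$ of the Remark and then ``matching'' the transported restricted analytic functions with those of \prettyref{defn:extension} on $\no$. The isomorphism in the Remark is only an isomorphism of ordered fields with restricted $\exp$, so the analytic structure it transports onto $N$ has no a priori relation to the natural one on the subfield $\no$; and your proposed repair --- that both are determined by Taylor expansion at real points --- does not close the gap. The axioms of $T_{an}$ only force $f(r+\eps)$ to agree with the partial Taylor sums up to errors $\prec\eps^n$ for each $n$, and in $\no$ there are plenty of nonzero elements (e.g.\ $\omega^{-\omega}$ relative to $\eps=\omega^{-1}$) that are $\prec\eps^n$ for all $n$, so the value at a nonstandard point of $[-1,1]^n$ is not pinned down this way; model completeness of $T_{an}$ does not supply the missing uniqueness. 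The correct argument is the one you relegate to a ``cross-check'': use the van den Dries--Macintyre--Marker axiomatisation of $T_{an}(\exp)$ over $T_{an}$, for which \prettyref{thm:Tan}, \prettyref{thm:Gonshor}(1),(5) and the compatibility of Gonshor's $\exp$ with the restricted exponential are exactly the required inputs. That is the route the paper (and the cited source \cite{DriesE2001}) actually takes; I would promote it to the main proof and drop the matching argument.
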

Since $T_{an}(\exp)$ is model complete, it follows that $\no$ is an elementary extension of $\R$ as an exponential field with all restricted analytic functions.

		\begin{rem} In the presence of the other hypothesis, 
			condition (i) in \prettyref{thm:Ressayre} is equivalent to: 
			$E(x) \geq x+1$ for all $x\in K$. The latter is a first order axiom, so the first-order theory $T_{\exp}$ is finitely axiomatisable over the theory of restricted $\exp$. As a consequence, these two theories are either both decidable or both undecidable. \citet{Macintyre1996a} prove the decidability of $T_{\exp}$ assuming ``Schanuel's conjecture'', but the unconditional decidability remains an open problem.  
		\end{rem}
		
\section{Exponential normal form}
The presence of $\exp$ and the omega-map generates a conflict of notation, as $\omega^x$ could be interpreted either as the omega-map applied to $x$ or as $\exp(x \log(\omega))$. To avoid ambiguities, when there is a risk of confusion we write $\orexp{x}$ (with a little dot) for the omega-map. 
\begin{defn}
Given $a,b\in \no$ with $a>0$, we define 
$$a^b = \exp(b \log(a))$$ 
%Unfortunately, when $a=\omega$, this notation is in conflict with the previous notation for Conway's omega-map. 
and we write $\orexp{b}$ or $\Omega(b)$ for the image of $b$ under the omega-map. 
% so we are free to write $\omega^b$ for $\exp(b\log(\omega))$. 
\end{defn}
In general $\Omega(b) = \orexp{b}\neq \omega^b = \exp(b \log(\omega))$. Indeed $\orexp{b}$ is always a monomial, while $\omega^b$ can be any positive surreal. 

From $\no=\R((\M))_\on$ (\prettyref{thm:normal-form}) and $\M = \exp(\no^\uparrow) = \orexp{\no}$ (\prettyref{thm:Gonshor}), we obtain the following result.   
\begin{prop}
	Every surreal number $f$ can be written in a unique way in the form 
	\begin{equation*}
	f = \sum_{i<\alpha} e^{\gamma_i} r_i
	\end{equation*}
	where 
	$\alpha$ is an ordinal, $(\gamma_i)_{i<\alpha}$ is a decreasing sequence in $\no^\uparrow$ and $r_i \in \R^*$.  
	%(as usual the empty sum is $0$).  
This is called the {\bf Ressayre form} of $f$, or the {\bf exponential normal form}.
\end{prop}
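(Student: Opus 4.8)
The plan is to derive the statement directly from the Conway normal form (\prettyref{thm:normal-form}) by translating between surreal monomials and their logarithms. Recall that by \prettyref{thm:normal-form} every $f\in\no$ has a unique Conway normal form $f=\sum_{i<\alpha}\m_i r_i$, where $\alpha\in\on$, the $r_i$ are nonzero reals, and $(\m_i)_{i<\alpha}$ is a decreasing enumeration of the support of $f$ in $\M=\orexp{\no}$. By \prettyref{thm:Gonshor}(4) we have $\exp(\no^\uparrow)=\M$, and since $\exp$ is injective (being an isomorphism onto $\no^{>0}$ by \prettyref{thm:Gonshor}(1)), the restriction $\exp\rest\no^\uparrow$ is a bijection from $\no^\uparrow$ onto $\M$ with inverse $\log\rest\M$; indeed $\log(\M)=\no^\uparrow$, as noted just after the proposition identifying $\log$ as the inverse of $\exp$. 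Moreover $\exp$ and $\log$ are strictly increasing, so $\exp\rest\no^\uparrow\colon\no^\uparrow\to\M$ is an order isomorphism.

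For existence, take the Conway normal form $f=\sum_{i<\alpha}\m_i r_i$ and set $\gamma_i=\log(\m_i)\in\no^\uparrow$, so that $\m_i=e^{\gamma_i}$ and hence $f=\sum_{i<\alpha}e^{\gamma_i}r_i$. Since $\log$ is increasing and $(\m_i)_{i<\alpha}$ is decreasing, $(\gamma_i)_{i<\alpha}$ is a decreasing sequence in $\no^\uparrow$; the family $(e^{\gamma_i}r_i)_{i<\alpha}$ is summable because its support coincides with that of the original normal form.

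For uniqueness, suppose $f=\sum_{i<\beta}e^{\delta_i}s_i$ with $\beta\in\on$, $s_i\in\R^*$, and $(\delta_i)_{i<\beta}$ decreasing in $\no^\uparrow$. Each $e^{\delta_i}$ belongs to $\M$ by \prettyref{thm:Gonshor}(4), and since $\exp$ is increasing, $(e^{\delta_i})_{i<\beta}$ is a decreasing sequence of surreal monomials; therefore $\sum_{i<\beta}e^{\delta_i}s_i$ is a Conway normal form of $f$. The uniqueness clause of \prettyref{thm:normal-form} then forces $\beta=\alpha$, $e^{\delta_i}=\m_i=e^{\gamma_i}$ and $s_i=r_i$ for all $i<\alpha$, whence $\delta_i=\gamma_i$ by injectivity of $\exp$. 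There is no genuine difficulty here beyond this bookkeeping: the substance lies entirely in the cited facts $\no=\R((\M))_\on$ and $\M=\exp(\no^\uparrow)$, and the one point needing a word of care is that $\m\mapsto\log\m$ carries decreasing sequences of monomials to decreasing sequences in $\no^\uparrow$ and back, which is precisely the order isomorphism $\exp\rest\no^\uparrow$ recorded above.
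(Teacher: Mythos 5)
Your proposal is correct and follows exactly the route the paper takes: the paper obtains the proposition in one line from $\no=\R((\M))_\on$ (Theorem~\ref{thm:normal-form}) together with $\M=\exp(\no^\uparrow)$ (Theorem~\ref{thm:Gonshor}), which is precisely the substitution $\m_i=e^{\gamma_i}$ via the order bijection $\exp\rest\no^\uparrow\colon\no^\uparrow\to\M$ that you carry out. Your write-up merely makes explicit the bookkeeping (monotonicity, summability, and uniqueness via injectivity of $\exp$) that the paper leaves implicit.
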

To convert the Conway normal form to the Ressayre form we need to show how to pass from the representation $\M = \orexp{\no}$ to the representation $\M= \exp(\no^\uparrow)$. It is convenient to introduce the following definition.  

%By the above remarks, there is a function $H: \no\to \no^\uparrow$ such that   $e^{H(x)} = \orexp{x}$ for all $x\in \no$. Below we explain how to compute $H$ and its inverse. 

\begin{defn} \label{defn:H-G} Let $g: \no^{>0}\to \no$ and $h: \no\to \no^{>0}$ be as in Definitions \ref{defn:g-h} and \ref{defn:h} and recall that $h$ is the inverse of $g$. Define $G: \no^\uparrow \to \no$ and $H:\no \to \no^\uparrow$ as follows. 
	\begin{enumerate}
			\item $G(\sum_{i<\alpha} \orexp{y_i} r_i) = \sum_{i<\alpha} \orexp{g(y_i)} r_i$; 
			\item $H(\sum_{i<\alpha} \orexp{x_i} r_i) = \sum_{i<\alpha} \orexp{h(x_i)} r_i$.
	\end{enumerate}
Clearly $H$ and $G$ are strongly $\R$-linear (i.e. they are $\R$-linear and distributive over infinite sums) and $H$ is the inverse of $G$. 
\end{defn}
The following proposition is a rephrasing of Definitions \ref{defn:exp}(4) and \ref{defn:log}(4). 
%The surreal $\exp$ and Conway's omega-map $\Omega$ are related as follows. 
%\def\GonshorH{\citep[Thm. 10.13]{Gonshor1986}}
\begin{prop} We have: 
	\begin{enumerate}
		\item 
	$e^\gamma = \orexp{G(\gamma)}$ for all $\gamma \in \no^\uparrow$; 
		\item 
		$\orexp{x} = e^{H(x)}$ for all $x\in \no$. 
		\end{enumerate}
\end{prop}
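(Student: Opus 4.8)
The plan is to unwind the definitions: as the remark preceding the statement says, the proposition is just a rephrasing of Definitions~\ref{defn:exp}(4) and \ref{defn:log}(4) in the notation introduced in Definition~\ref{defn:H-G}.

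First I would prove (1). Fix $\gamma\in\no^\uparrow$ and write it in Conway normal form $\gamma=\sum_{i<\alpha}\orexp{y_i}r_i$. Since $\gamma$ is purely infinite every monomial $\orexp{y_i}$ is $>1$, so $y_i>0$ and hence $g(y_i)$ is defined (the domain of $g$ is $\no^{>0}$). Applying Definition~\ref{defn:exp}(4) — where the symbol ``$\omega^{(-)}$'' denotes the omega-map, now written $\orexp{(-)}$ — gives $e^\gamma=\exp(\gamma)=\orexp{\sum_{i<\alpha}\orexp{g(y_i)}r_i}$. By Definition~\ref{defn:H-G}(1) the exponent on the right is exactly $G(\gamma)$, whence $e^\gamma=\orexp{G(\gamma)}$.

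Then (2) follows formally from (1): given $x\in\no$, set $\gamma:=H(x)$, which lies in $\no^\uparrow$ by the definition of $H$; applying (1) and using that $G$ is the inverse of $H$ (Definition~\ref{defn:H-G}) yields $e^{H(x)}=\orexp{G(H(x))}=\orexp{x}$. Alternatively one can obtain (2) directly by unwinding Definition~\ref{defn:log}(4) together with Definition~\ref{defn:H-G}(2): writing $x=\sum_{i<\alpha}\orexp{x_i}r_i$ one gets $\log(\orexp{x})=\sum_{i<\alpha}\orexp{h(x_i)}r_i=H(x)$, and since $\exp$ and $\log$ are mutually inverse, $\orexp{x}=\exp(\log(\orexp{x}))=e^{H(x)}$.

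There is essentially no obstacle here; the only point that needs care is the notational clash flagged just above the statement — one must read the ``$\omega^{x_i}$'' in Definition~\ref{defn:exp}(4) and the ``$\oxp{x_i}$'' in Definition~\ref{defn:log}(4) as values of the omega-map $\orexp{(-)}$, not as $\exp(x_i\log\omega)$ — together with the trivial bookkeeping that a purely infinite surreal has all its normal-form exponents $y_i$ strictly positive, which is what makes $g(y_i)$ meaningful. Once this is in place both identities are immediate.
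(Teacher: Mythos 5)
Your proposal is correct and matches the paper exactly: the paper offers no argument beyond the remark that the proposition is a rephrasing of Definitions~\ref{defn:exp}(4) and \ref{defn:log}(4) via Definition~\ref{defn:H-G}, which is precisely the unwinding you carry out (including the correct reading of $\omega^{(-)}$ as the omega-map and the observation that the exponents of a purely infinite surreal are positive). Nothing is missing.
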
	

\begin{cor}
 For $r\in \R$ and $x\in \no$, $\Omega(x)^r= \Omega(xr)$, so a real power of a monomial is a monomial. In particular, $\omega^r = \orexp{r}$. 
\end{cor}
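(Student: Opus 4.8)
The plan is to reduce everything to the definition $a^b=\exp(b\log a)$ together with the two identities $e^{\gamma}=\orexp{G(\gamma)}$ and $\orexp{x}=e^{H(x)}$ from the proposition preceding the corollary, combined with the $\R$-linearity of $H$ recorded in \prettyref{defn:H-G}. Since $\Omega(x)=\orexp{x}$ is a monomial it is positive, so the expression $\Omega(x)^r$ makes sense for every $r\in\R$.

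Concretely, fix $x\in\no$ and $r\in\R$. First I would unwind
\[
\Omega(x)^r=\exp\bigl(r\log\Omega(x)\bigr)=\exp\bigl(r\log\orexp{x}\bigr).
\]
By the proposition above, $\orexp{x}=e^{H(x)}$ with $H(x)\in\no^\uparrow$; since $\log$ is the inverse of $\exp$ (established in the section on the logarithm), this gives $\log\orexp{x}=H(x)$, hence $\Omega(x)^r=\exp\bigl(rH(x)\bigr)$. Next, using that $H$ is $\R$-linear (\prettyref{defn:H-G}) and that $\no$ is commutative, $rH(x)=H(rx)=H(xr)$, so $\Omega(x)^r=\exp\bigl(H(xr)\bigr)$. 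A second application of the same proposition (the identity $e^{H(y)}=\orexp{y}$ for $y\in\no$) yields $\exp\bigl(H(xr)\bigr)=\orexp{xr}=\Omega(xr)$, which is the desired equality. Since a monomial is by definition an element of $\M=\orexp{\no}$ and $\orexp{xr}\in\M$, it follows that a real power of a monomial is again a monomial (the case $r=0$ being $\Omega(x)^0=1=\orexp{0}$). For the last assertion, take $x=1$: recalling that $\orexp{1}=\omega$ for Conway's omega-map, we obtain $\omega^r=\Omega(1)^r=\Omega(r)=\orexp{r}$.

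There is essentially no obstacle here, as befits a corollary: the only thing requiring a moment of care is the bookkeeping of domains — one must know that $H$ maps $\no$ into $\no^\uparrow$ so that the identity $\orexp{x}=e^{H(x)}$ is applicable, and that $\exp$ and $\log$ are genuine mutual inverses on the relevant classes, both of which are already available. Everything else is a single line of rewriting.
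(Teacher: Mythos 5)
Your argument is correct and is essentially the paper's own proof: the paper computes $\Omega(x)^r = (e^{H(x)})^r = e^{H(x)r} = e^{H(xr)} = \Omega(xr)$ using the $\R$-linearity of $H$, which is exactly your chain of rewrites once $(e^{H(x)})^r=\exp\bigl(rH(x)\bigr)$ is unwound from the definition $a^b=\exp(b\log a)$. The extra bookkeeping you add (domains of $H$, the inverse relation between $\exp$ and $\log$, the case $r=0$, and the specialisation $x=1$) is all consistent with the paper.
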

\begin{proof} By
	the $\R$-linearity of $H$, $\Omega(x)^r = (e^{H(x)})^r  = e^{H(x)r} = e^{H(xr)} = \Omega(xr)$  
\end{proof}
	
% 
%	\begin{defn} Given a subclass $V\subseteq \no$ closed under sums of summable families and a map $F:V\to \no$, we say that $F$ is {\em strongly additive} if whenever $(a_i)_{i\in I}$ is a summable family in $V$, then $(F(a_i))_{i\in I}$ is a summable family and $F(\sum_{i\in I} a_i r_i) = \sum_{i\in I}F(a_i)r_i$. 
%		If $V$ is a $\R$-vector subspace, we say say that $F$ 
%		is {\em strongly $\R$-linear} if it is strongly additive and $\R$-linear. 
%	\end{defn}
%	
\section{Normal forms as asymptotic expansions}
Given a function $f(x)$ defined as a composition of algebraic operations, $\log$ and $\exp$, we may consider $f(\omega)$ as a surreal number, and the normal form of $f(\omega)$ corresponds to an asymptotic development of $f(x)$ for $x\to +\infty$. 
%For example consider the Skolem function $(\omega+1)^\omega$ and identify $\omega$ with $\omega\in \no$. 
\begin{exa}
To find the normal form of $(\omega+1)^\omega\in \no$ we write 
%$$(\omega+1)^\omega = \exp({\omega \log(1+\omega)})$$ and 
%\begin{align*}
%\log(1+\omega) &= \log(\omega(1+\omega^{-1})) \\
%& =   \log(\omega) + \sum_{n=1}^{\infty} \frac{(-1)^{n+1}}{n}\omega^{-n}.
%%\\&= \log(\omega) + \omega^{-1} - \omega^{-2}/2 + \ldots
%\end{align*} 
%We thus obtain 
\begin{align*}
(\omega+1)^\omega &= \exp\left(\omega(\log(1+\omega))\right)\\
& = \exp\left(\omega \left(\log(\omega) + \log(1+\omega^{-1})\right)\right)\\
&= \exp\left(\omega  \log(\omega) + \sum_{n=1}^{\infty} \frac{(-1)^{n+1}}{n}\omega^{-n+1} \right) \\
& = \exp\left(\omega  \log(\omega) + 1 - \frac{1}{2}\omega^{-1} + \ldots \right)\\
&= \omega^\omega e^1 \exp(-2^{-1}\omega^{-1} + \ldots) \\
& = e \omega^\omega \left( 1 -2^{-1}\omega^{-1} + \ldots\right)\\
& = e \omega^\omega - e 2^{-1}\omega^{\omega -1} + \ldots
\end{align*}
Replacing $\omega$ with a real variable $\x$ this corresponds to the asymptotic development 
$$(\x+1)^\x = e \x^\x - e2^{-1} \x^{\x-1} + \ldots$$
for $\x\to +\infty$. 
\end{exa}

The idea of normal forms as asymptotic expansions in exploited in \cite{Berarducci2019a} to study the possible limits at $+\infty$ of the quotients of two ``Skolem functions'', improving some results of \cite{Dries1984} on a problem of Skolem \citep{Skolem1956}. 

\section{Infinite products}
In $\no$ we have a notion of infinite sum due to the identification $\no = \R((\M))_\on$, but we also have an $\exp$ and a $\log$ function, so we can introduce infinite products via the formula 
\begin{equation*}
\prod_{i\in I} f_i = \exp\left(\sum_{i\in I}\log(f_i)\right)
\end{equation*}
where $(f_i)_{i\in I}$ in $\no^{>0}$ is such that $(\log(f_i))_{i\in I}$ is summable. We say in this case that the product of the family $(f_i)_{i\in I}$ exists. 
%
%Given a family of monomial  $(\omega^{x_i})_{i\in I}$ such that $(x_i)_{i\in I}$ is summable, there is another possible definition of the product given by 
%$$\prod_{i\in I} \omega^{x_i} = \omega^{\sum_{i\in I} x_i}$$
It follows at once from the definition that if $(x_i)_{i\in I}$ is summable, then the product of the family $(e^{x_i})_{i\in I}$ exists and 
$$\prod_{i\in I} e^{x_i} = e^{\sum_{i\in I} x_i}.$$
As observed in \cite{Aschenbrenner2019}, a similar statement holds for the omega-map: if $(x_i)_{i\in I}$ is summable, then the product of the family $(\orexp{x_i})_{i\in I}$ exists and 
\begin{equation*}\label{eq:prod-monomials}
\prod_{i\in I} \orexp{x_i} = \orexp{\sum_{i\in I} x_i}.
\end{equation*}
For the proof it suffices to observe that $\orexp{x_i} = e^{H(x_i)}$ where $H:\no\to \no^\uparrow$ is the strongly $\R$-linear isomorphism in \prettyref{defn:H-G}. In particular we have: 
\begin{prop} \label{prop:prodM} Given a family of monomials $(\m_i)_{i\in I}$, if the product $\prod_{i\in I}\m_i$ exists, it is a monomial.
\end{prop}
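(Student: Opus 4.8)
The plan is to deduce \prettyref{prop:prodM} directly from the displayed formula $\prod_{i\in I}\orexp{x_i} = \orexp{\sum_{i\in I} x_i}$ established just above it. Suppose $(\m_i)_{i\in I}$ is a family of monomials and assume the product $\prod_{i\in I}\m_i$ exists, meaning $(\log(\m_i))_{i\in I}$ is summable. Since each $\m_i$ is a surreal monomial, by \prettyref{thm:Gonshor}(4) (or rather the identification $\M = \orexp{\no}$) we may write $\m_i = \orexp{x_i}$ for a unique $x_i\in \no$. The first step is to check that $(x_i)_{i\in I}$ is itself summable: this follows because $x_i = H^{-1}$ applied to $\log(\m_i)$, or more directly because $\log(\m_i) = \log(\orexp{x_i}) = e^{H(x_i)}$... wait, that is not linear. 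Let me instead use that $\log(\orexp{x_i}) = \orexp{h(x_i)}\cdot(\text{something})$; cleaner is to invoke the relation $\orexp{x_i} = e^{H(x_i)}$ from the proposition preceding the corollary, so $\log(\m_i) = H(x_i)$. Since $H:\no\to\no^\uparrow$ is a strongly $\R$-linear isomorphism (\prettyref{defn:H-G}), and in particular preserves and reflects summability, the summability of $(\log(\m_i))_{i\in I} = (H(x_i))_{i\in I}$ is equivalent to the summability of $(x_i)_{i\in I}$.

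Once we know $(x_i)_{i\in I}$ is summable, the displayed formula $\prod_{i\in I}\orexp{x_i} = \orexp{\sum_{i\in I} x_i}$ applies and gives $\prod_{i\in I}\m_i = \orexp{\sum_{i\in I} x_i}$. Since the omega-map sends every surreal to a surreal monomial (it is a parametrisation of $\M$ by the theorem on the omega-map), the right-hand side $\orexp{\sum_{i\in I} x_i}$ is a monomial, which is exactly the claim.

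The only real content beyond bookkeeping is the claim that a strongly $\R$-linear isomorphism such as $H$ reflects summability, i.e.\ that $(H(x_i))_i$ summable implies $(x_i)_i$ summable. This is where I expect the main (minor) obstacle to lie, since strong linearity is phrased as distributivity over summable families, not a priori as an ``iff'' on summability. But $H$ has a strongly $\R$-linear inverse $G$ (also in \prettyref{defn:H-G}), and applying $G$ to the summable family $(H(x_i))_i$ recovers $(G(H(x_i)))_i = (x_i)_i$ as a summable family. So the two-sidedness of the isomorphism does all the work, and no delicate argument about supports is needed.

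I would therefore write the proof as: write $\m_i = \orexp{x_i}$; note $\log \m_i = H(x_i)$ so $(x_i)_i$ is summable because $G$ is strongly $\R$-linear and $(H(x_i))_i$ is; apply the product formula for the omega-map to get $\prod_i \m_i = \orexp{\sum_i x_i}$; conclude this is a monomial since $\orexp{\no} = \M$. This keeps the argument to a few lines and reuses exactly the machinery already set up in the preceding sections.

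\begin{proof}
Write each $\m_i = \orexp{x_i}$ with $x_i\in\no$, using that the omega-map parametrises $\M$. By the preceding proposition, $\orexp{x_i} = e^{H(x_i)}$, so $\log(\m_i) = H(x_i)$, where $H:\no\to\no^\uparrow$ is the strongly $\R$-linear isomorphism of \prettyref{defn:H-G}. By hypothesis the product $\prod_{i\in I}\m_i$ exists, i.e.\ the family $(\log(\m_i))_{i\in I} = (H(x_i))_{i\in I}$ is summable. Applying the strongly $\R$-linear map $G = H^{-1}$ to this summable family shows that $(x_i)_{i\in I} = (G(H(x_i)))_{i\in I}$ is summable. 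Hence the product formula $\prod_{i\in I}\orexp{x_i} = \orexp{\sum_{i\in I} x_i}$ applies and gives
\[
\prod_{i\in I}\m_i = \orexp{\textstyle\sum_{i\in I} x_i}.
\]
Since $\orexp{\no} = \M$, the right-hand side is a surreal monomial.
\end{proof}
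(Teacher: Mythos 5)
Your proof is correct and follows essentially the same route as the paper, which derives the proposition as an immediate consequence of the formula $\prod_{i\in I}\orexp{x_i} = \orexp{\sum_{i\in I} x_i}$ and the identity $\orexp{x} = e^{H(x)}$. The only addition is your explicit check that summability of $(\log\m_i)_i = (H(x_i))_i$ transfers to $(x_i)_i$ via the strongly $\R$-linear inverse $G$, a detail the paper leaves implicit but which you handle correctly.
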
 
The notation of infinite products will be employed in the definition of a surreal derivation. In general the infinite distributivity law fails, namely $\prod_{i\in I}\sum_{j\in J}x_{i,j} \neq \sum_{f:I\to J} \prod_{i\in I} x_{i,f(j)}$. Indeed it is easy to find examples in which the left-hand side is well defined, but the right-hand side is not because of summability problems. 

%Notice that we could have defined the product $\prod_{i\in I} \orexp{x_i}$ via the above identity without using $\exp$ and $\log$. 
%One could perhaps try to generalize this observation to the case of arbitrary products. 

%The following proposition follows easily from the results established so far. 

\section{Levels and log-atomic numbers}
\begin{defn}
	Given $x,y \in \no^{>\N}$, we say that $x$ and $y$ have the same {\bf level} if $\log_n(x) \asymp \log_n(y)$ for some $n\in \N$. If $x$ and $y$ have different levels and $x<y$, we say that $x$ has lower level than $y$,  or that $y$ has higher level than $x$. 
	% We write $x\prec_L y$ if $x$ has lower level than $y$, $x \asymp_L y$ if $x$ has the same level than $y$, and $x\preceq_L$ if $x\prec_L y$ or $x\asymp_L y$. 
\end{defn}
By definition, $x$ has lower level than $y$ if and only if $\log_n(x) \prec \log_n(y)$ for every $n\in \N$, or equivalently $\exp_n(k\log_n(x))<y$ for every $n,k\in \N$. 

To be in the same level is a coarser equivalence relation than to be in the same Archimedean class: given $x\in \no^{>\N}$, the elements $x$ and $x^n$ are in the same level for each $n\in \N$, while $\exp(x)$ is in a higher level. The class of levels has the following density property.  
\begin{prop}
	Given two subsets $A$ and $B$ of $\no^{>\N}$ such that the level of every element of $A$ is smaller than the level of every element of $B$, there is $z \in \no$ which has higher level than all elements of $A$ and lower level than all elements of $B$. 	
\end{prop}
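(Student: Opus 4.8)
The plan is to exhibit $z$ directly as a cut $\pair{L}{R}$, with $L$ and $R$ chosen so that the inequalities defining ``$z$ has higher level than every element of $A$'' and ``$z$ has lower level than every element of $B$'' become literally the conditions $L<z$ and $z<R$. I will use the characterisation recalled just after the definition of level: for $x,y\in\no^{>\N}$, the surreal $x$ has lower level than $y$ if and only if $\exp_n(k\log_n x)<y$ for all $n,k\in\N$. I also note that $\exp$ and $\log$ restrict to mutually inverse increasing bijections of $\no^{>\N}$: if $w\in\no^{>\N}$ then $\exp w>w>\N$ by \prettyref{thm:Gonshor}, so $\exp w\in\no^{>\N}$; conversely, were $\log w\leq m$ for some $m\in\N$, then $w=\exp(\log w)\leq e^m$ would be finite, a contradiction. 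Accordingly I set
$$L=\N\cup\{\exp_n(k\log_n a):a\in A,\ n,k\in\N\},\qquad R=\{\exp_n(\tfrac1{k+1}\log_n b):b\in B,\ n,k\in\N\}.$$

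The first step is to verify $L<R$, so that $z:=\pair{L}{R}$ is a legitimate surreal, lying in $\no^{>\N}$ because $\N\subseteq L$. For $k\geq1$ one has $\log_n(\exp_n(k\log_n a))=k\log_n a\asymp\log_n a$, so $\exp_n(k\log_n a)$ has the same level as $a$; likewise $\exp_n(\tfrac1{k+1}\log_n b)$ has the same level as $b$ since $\tfrac1{k+1}\log_n b\asymp\log_n b$. As the level of every element of $A$ is strictly below that of every element of $B$, and an element of lower level is in particular smaller, every element of $L$ of the form $\exp_n(k\log_n a)$ with $k\geq1$ is below every element of $R$; the remaining terms — the finite reals $\exp_n(0)$ and the integers in $\N$ — are trivially below the infinite elements of $R$. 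Next, $z$ has higher level than each $a\in A$: for all $n,k$ we have $\exp_n(k\log_n a)<z$, either because this element lies in $L$ (when $k\geq1$) or because it is finite while $z>\N$ (when $k=0$), which is exactly the characterisation. Finally, fix $b\in B$ and $n,k\in\N$; since $\exp_n(\tfrac1{k+1}\log_n b)\in R$ we get $z<\exp_n(\tfrac1{k+1}\log_n b)$, and applying the increasing map $\log_n$ yields $\log_n z<\tfrac1{k+1}\log_n b$, hence $k\log_n z<(k+1)\log_n z<\log_n b$ (using $\log_n z>0$), and then $\exp_n(k\log_n z)<b$. So $z$ has lower level than each $b\in B$, completing the argument.

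The one point that needs care — and the reason the naive attempt $L=A$, $R=B$ fails — is strictness: from $a<z<b$ alone one only concludes that the level of $z$ is $\geq$ that of $a$ and $\leq$ that of $b$. The fix is to saturate both sides: every term $\exp_n(k\log_n a)$ has the same level as $a$ and every term $\exp_n(\tfrac1{k+1}\log_n b)$ has the same level as $b$, but letting $k$ range over all of $\N$ is precisely what forces $z$ to be of strictly higher level than each $a$ and of strictly lower level than each $b$. Adjoining $\N$ to $L$ is a small but necessary precaution ensuring $z\in\no^{>\N}$ (so that $z$ has a level at all) in the degenerate case $A=\emptyset$.
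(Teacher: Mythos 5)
Your proof is correct and follows essentially the same route as the paper: the paper takes $z=\pair{A'}{B'}$ with $A'=\{\exp_n(k\log_n a)\}$ and $B'=\{\exp_n(2^{-k}\log_n b)\}$, which differs from your $L,R$ only in using $2^{-k}$ in place of $\tfrac1{k+1}$ (and in omitting your sensible precaution of adjoining $\N$ to handle $A=\emptyset$). Your verification of $L<R$ and of the strict level inequalities is a more detailed write-up of exactly the argument the paper leaves implicit.
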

\begin{proof}It suffices to take $z = \pair{A'}{B'}$ where $A'\supseteq A$ and $B'\supseteq B$ are defined by $A'=\{ \exp_n(k\log_n(a)) \mid n,k\in \N, a\in A\}$ and $B' = \{\exp_n(2^{-k} \log_n(b)) \mid n,k\in \N, b\in B\}$. 
\end{proof}
It follows from the above density properties that there are intermediate levels between $\omega$ and $\exp(\omega)$. Our next goal is to parametrize a class of representative of the levels by $\no$ itself.    
\begin{defn}\label{defn:lambda}
	By induction on simplicity, we define $\lambda_x\in \no^{>\N}$ as the simplest element such that:
	\begin{enumerate}
		\item  $\lambda_x$ is the simplest element of its level,
		\item $\lambda_{x'} < \lambda_x < \lambda_{x''}$ for all left-options $x'$ of $x$ and all right-options $x''$ of $x$.
	\end{enumerate} 
\end{defn}
The above is equivalent to \cite[Definition 5.12]{Berarducci2018}. 
\begin{prop}
	The map $x\mapsto \lambda_x$ is increasing and its image $\lambda_\no = \{\lambda_x \mid x\in \no\}$ contains one and only one representative for each level.
\end{prop}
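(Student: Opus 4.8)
\emph{Plan.} The plan is to reformulate \prettyref{defn:lambda} so that $\lambda_x$ becomes the simplest element of a convex class, and then to read off both assertions. For well-definedness we argue by induction on simplicity. Assume $\lambda_z$ is defined for all $z$ simpler than $x$. Any two options of $x$ are ancestors of $x$, hence comparable for simplicity, so if $x'$ is a left-option and $x''$ a right-option of $x$ then one of them is an option of the other, and the second clause of \prettyref{defn:lambda} forces $\lambda_{x'}<\lambda_{x''}$; by the first clause each of $\lambda_{x'},\lambda_{x''}$ is the simplest element of its level, so these levels are distinct and $\lambda_{x'}$ has strictly lower level than $\lambda_{x''}$. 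Let $E_x$ be the class of $y\in\no^{>\N}$ whose level is strictly above the level of every such $\lambda_{x'}$ and strictly below the level of every such $\lambda_{x''}$. By the density property proved above (directly when $x=0$), $E_x\neq\emptyset$; and since for positive infinite surreals a strictly lower level forces a strictly smaller value, $E_x$ is convex, so it has a unique simplest element. Membership in $E_x$ depends only on the level of $y$, so the simplest element of $E_x$ is the simplest element of its own level (first clause), it satisfies the second clause because a higher level forces a larger value, and conversely any $y$ satisfying both clauses lies in $E_x$. Hence $\lambda_x$ is well defined and equals the simplest element of $E_x$.

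\emph{Monotonicity, and ``at most one''.} Let $x<y$ and let $p$ be their longest common ancestor. If $p=x$ then $x$ is a left-option of $y$; if $p=y$ then $y$ is a right-option of $x$; otherwise $x$ and $y$ both properly extend $p$ and, since $x<y$, the sign of $x$ just after $p$ is ``$-$'' while that of $y$ is ``$+$'', so $p$ is a right-option of $x$ and a left-option of $y$. In every case the second clause of \prettyref{defn:lambda} gives $\lambda_x<\lambda_y$ (through $\lambda_x<\lambda_p<\lambda_y$ in the third case), so $x\mapsto\lambda_x$ is strictly increasing and, in particular, injective. If $\lambda_x$ and $\lambda_y$ lie in the same level then both equal the simplest element of that level, so $\lambda_x=\lambda_y$ and $x=y$; hence $\lambda_\no$ meets each level in at most one point.

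\emph{``At least one'', and the main obstacle.} Let $m$ be the simplest element of a level $\ell$; we prove $m\in\lambda_\no$ by induction on $\beta=\birth(m)$. Let $X_A$ (resp.\ $X_B$) be the set of $x$ with $\lambda_x$ the simplest element of some level below (resp.\ above) $\ell$ of birthday $<\beta$; by the induction hypothesis and the previous paragraph these are sets, $X_A<X_B$, and $x^\ast:=\pair{X_A}{X_B}$ is defined. We claim $\lambda_{x^\ast}=m$. This rests on a \emph{uniformity} property of $\lambda$: for any presentation $x=\pair LR$, not just the canonical one, $\lambda_x$ is still the simplest $y\in\no^{>\N}$ whose level is strictly above that of every $\lambda_a$ ($a\in L$) and strictly below that of every $\lambda_b$ ($b\in R$). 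Granting it, $\lambda_{x^\ast}$ is the simplest positive infinite surreal whose level lies strictly between the levels below $\ell$ and the levels above $\ell$ that are realised by simplest-of-their-level surreals of birthday $<\beta$; the number $m$ has this property, while any strictly simpler candidate would be the simplest element of its level of birthday $<\beta$, hence would coincide with some $\lambda_a$ ($a\in X_A$), some $\lambda_b$ ($b\in X_B$), or with $m$ itself — each impossible. Therefore $\lambda_{x^\ast}=m$. Everything above is routine given the density property and the tree structure of $\no$ \emph{except} the uniformity property, which is the $\lambda$-analogue of Conway's cofinality (``uniformity'') theorems for $+$, $\cdot$ and the $\omega$-map: since two presentations of the same surreal need not be mutually cofinal it is not automatic, and proving it — by induction on simplicity, using monotonicity together with the cofinality results for $\no$, and comparing the canonical options of $\pair{X_A}{X_B}$ with $X_A$ and $X_B$ while tracking the levels of the corresponding $\lambda$-values — is where the actual difficulty lies.
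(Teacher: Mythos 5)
The paper states this proposition without proof, deferring implicitly to \cite{Berarducci2018} (where $\lambda_x$ is defined by an explicit Conway-style recursion with cofinal option sets), so your argument has to be judged on its own terms. Most of it is correct and complete: the reformulation of \prettyref{defn:lambda} as ``$\lambda_x$ is the simplest element of the convex class $E_x$ of positive infinite surreals whose level lies strictly between the levels of the $\lambda_{x'}$ and of the $\lambda_{x''}$'', the verification that $E_x\neq\emptyset$ via the density proposition, the monotonicity argument through the longest common ancestor, and the ``at most one representative per level'' step are all sound.

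The one genuine gap is in the surjectivity step: you reduce it to the uniformity property --- that for an \emph{arbitrary} representation $x=\pair{L}{R}$ the value $\lambda_x$ is still the simplest element of the corresponding class $E_{L,R}$ --- and then explicitly decline to prove it, calling it the place ``where the actual difficulty lies''. As written the proof is therefore incomplete. You have, however, overestimated the difficulty: uniformity follows in a few lines from what you already established, with no further induction. First, $\lambda_x\in E_{L,R}$: for $a\in L$ one has $a<x$, hence $\lambda_a<\lambda_x$ by monotonicity; since both are simplest elements of their levels and are distinct (injectivity), the level of $\lambda_a$ is strictly below that of $\lambda_x$, and similarly on the right. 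Second, $E_{L,R}\subseteq E_x$: if $p$ is an ancestor of $x$ with $p<x$, then $p\notin\{z\mid L<z<R\}$ because $x$ is the simplest element of that convex class and $p$ is strictly simpler; the alternative $p\geq b$ for some $b\in R$ would force $p>x$, so $p\leq a$ for some $a\in L$, and by monotonicity the constraint coming from the canonical option $p$ is implied by the constraint coming from $a$. Since $\lambda_x$ is the simplest element of $E_x$ and belongs to the subclass $E_{L,R}$, it is the simplest element of $E_{L,R}$. With this lemma inserted, your induction on $\birth(m)$ goes through, modulo one slip of wording: it is the \emph{simplest} element of $E_{X_A,X_B}$, not an arbitrary candidate simpler than $m$, that is automatically the simplest element of its own level and hence, by the induction hypothesis, of the form $\lambda_z$.
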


\begin{rem} We have: 
	\begin{enumerate}
		\item The elements $\lambda_{-\alpha}$ with $\alpha \in \on$ are coinitial in $\no^{>\N}$. 
		\item The elements $\lambda_\alpha$ with $\alpha\in \on$ are cofinal in $\no$. 
	\end{enumerate}
\end{rem}

%Moreover, by construction, $\lambda_x$ is the simplest representative of its level. 
%We call {\bf $\lambda$-numbers} the elements of $\lambda_\no$. 

\begin{defn}
	For $n\in \N$ we define: 
	\begin{enumerate}
		\item $\log_0(x) = x$,  $\log_{n+1}(x) = \log(\log_n(x))$; 
		\item $\exp_0(x) = x$,  $\exp_{n+1}(x) = \exp(\exp_n(x))$.
	\end{enumerate}  
	We extend the definition to the case when $n\in \Z$ putting $\log_n= \exp_{-n}$. 
\end{defn}
\begin{prop}[\cite{Berarducci2018}] \label{prop:lambda-n} We have $\lambda_{-n} = \log_n(\omega)$ for all $n\in \Z$. In particular, $\lambda_{-1} = \log(\omega), \lambda_0 = \omega, \lambda_1 = \exp(\omega)$. 
	%	$\lambda_0 = \omega$, $\lambda_{-n} = \log_n(\omega)$ and $\lambda_n = \exp_n(\omega)$ for all $n\in \N$.
\end{prop}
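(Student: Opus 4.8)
The plan is to prove by induction on $|n|$ the two statements $\lambda_n=\exp_n(\omega)$ for $n\geq 0$ and $\lambda_{-n}=\log_n(\omega)$ for $n\geq 0$; as $\log_n=\exp_{-n}$, these together are precisely $\lambda_{-n}=\log_n(\omega)$ for all $n\in\Z$. Write $T$ for the class of surreals that are the simplest element of their level; by the proposition immediately following \prettyref{defn:lambda}, the map $x\mapsto\lambda_x$ is an increasing bijection of $\no$ onto $T$ with exactly one value in each level. (This uses that levels are convex classes: if $x<y<z$ with $\log_k x\asymp\log_k z$, then applying the increasing map $\log_k$ squeezes $y$ into the same Archimedean class; hence each level has a simplest element.)

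The base case is $\lambda_0=\omega$. Since $0$ has no options, \prettyref{defn:lambda} makes $\lambda_0$ the simplest element of $T$, and this is $\omega$: indeed $\omega$ is already the simplest element of the whole class $\no^{>\N}$ --- its birthday is $\omega$, and every surreal born earlier is bounded by a natural number --- hence \emph{a fortiori} the simplest element of its level, and simpler than every other element of $T\subseteq\no^{>\N}$. For the inductive step, one reads off from the canonical representations of $-n$ (no left option; right options $0,-1,\ldots,-(n-1)$) and of $n$ (left options $0,1,\ldots,n-1$; no right option), together with the inductive hypotheses $\lambda_{-(n-1)}=\log_{n-1}(\omega)$, $\lambda_{n-1}=\exp_{n-1}(\omega)$ and the monotonicity of $\lambda$, that $\lambda_{-n}$ is the simplest element of $T$ below $\log_{n-1}(\omega)$ and $\lambda_n$ is the simplest element of $T$ above $\exp_{n-1}(\omega)$.

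Now $\log_n(\omega)$ is a genuine candidate for $\lambda_{-n}$: it lies in $\no^{>\N}$, and since $\exp(t)>t$ for every $t\in\no^{>\N}$ by point (5) of \prettyref{thm:Gonshor}, we get $\log_n(\omega)=\log(\log_{n-1}(\omega))<\log_{n-1}(\omega)$; dually $\exp_n(\omega)=\exp(\exp_{n-1}(\omega))>\exp_{n-1}(\omega)$ is a candidate for $\lambda_n$. Thus the statement reduces to two points. The first: $\log$ and $\exp$ carry the simplest element of a level to the simplest element of a level --- a lower one for $\log$, a higher one for $\exp$ --- so that $\log_n(\omega),\exp_n(\omega)\in T$. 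One checks first that $\exp$, hence $\log$, is an order-automorphism of $\no^{>\N}$ preserving the relation ``$\log_k x\asymp\log_k y$ for some $k$'', so that it permutes the levels; that it then sends the simplest element of a level to the simplest element of the image level must be extracted from Gonshor's recursive bracket-formulas for $\exp$ and $\log$ (\prettyref{defn:exp}, \prettyref{defn:h}, \prettyref{defn:log}) by checking the cofinality hypotheses in Conway's simplicity theorem. It is cleanest to package this, as in \cite{Berarducci2018}, as the assertion that $x\mapsto\lambda_x$ is not merely order-preserving but also simplicity-preserving, hence a tree isomorphism of $\no$ onto $T$ with root $\omega$, whence $\lambda_{-n}$ is exactly the $n$-fold iterated left child of $\omega$ in $T$.

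The second point --- and this is the main obstacle --- is minimality: no element of $T$ simpler than $\log_n(\omega)$ may lie below $\log_{n-1}(\omega)$ (and dually for $\exp_n(\omega)$). Equivalently, iterating $\log$ (resp. $\exp$) from $\omega$ must yield at each stage the simplest element of $T$ strictly below (resp. above) the previous one; this rules out a ``shortcut'' through one of the many levels densely interpolated between $\log_n(\omega)$ and $\log_{n-1}(\omega)$ whose simplest representative could be born earlier than $\log_n(\omega)$. Establishing it requires comparing the birthdays of level-representatives --- using the explicit form of $\log_n(\omega)$ obtained from Gonshor's functions $g$ and $h$ --- and is the delicate content of the cited result of \cite{Berarducci2018}. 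Granting it, the induction closes, and the symmetry between $\log$ and $\exp$ disposes of the positive indices in the same way.
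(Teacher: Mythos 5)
The paper does not prove this proposition; it is quoted from \cite{Berarducci2018} without argument, so there is no in-paper proof to compare yours against. Judged on its own terms, your reduction is structurally correct: by \prettyref{defn:lambda} and the canonical options of $\pm n$, the claim does come down to (a) $\log_n(\omega)$ and $\exp_n(\omega)$ being simplest-of-level, and (b) the minimality statement that no simpler simplest-of-level element lies on the correct side of the previous iterate. Your base case $\lambda_0=\omega$ is fine. For point (a) you are working harder than necessary: rather than re-deriving from the bracket formulas that $\exp$ and $\log$ preserve ``simplest of its level'', you can invoke \prettyref{thm:log-atomic} (simplest of its level $\iff$ log-atomic $\iff$ in $\lambda_\no$) together with the observation that each $\log_n(\omega)=\omega^{\omega^{-n}}$ and $\exp_n(\omega)$ is a monomial (via Gonshor's $g$ and $h$), so that all iterated logarithms of $\log_n(\omega)$ are monomials and log-atomicity is immediate.

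The genuine gap is point (b), which you name but do not close: you assert that minimality ``must be extracted'' from Gonshor's formulas and ``is the delicate content of the cited result''. That step is not a technicality --- it is the entire content of the proposition. For instance, to get $\lambda_{-1}=\log(\omega)$ one must rule out every log-atomic $z$ with $\N<z<\omega$ that is simpler than $\omega^{\omega^{-1}}$; since log-atomic numbers are monomials, such a $z$ is $\omega^{y}$ with $0<y<1$, and one has to argue (comparing birthdays via \prettyref{thm:omega-simplicity} and computing $\log(\omega^{q})=q\log\omega$ for the simpler candidates $q$ dyadic) that none of these is log-atomic, and similarly at every later stage. As written, your argument establishes only that $\log_n(\omega)$ is \emph{a} candidate for $\lambda_{-n}$, not that it is the simplest one; the conclusion is therefore obtained by appeal to the very result being proved. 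To make this a proof you would need to supply the birthday comparison (or the simplicity-preservation of $x\mapsto\lambda_x$ that you allude to, which itself requires proof).
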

%We give below an interesting characterization of the the $\lambda$-numbers. 
\begin{defn} \label{defn:log-atomic}
	Given $x\in \no^{>\N}$, we say that $x$ is {\bf log-atomic} if, for each $n\in \N$, $\log_n(x)$ is a monomial. 
\end{defn}
\begin{thm}[\cite{Berarducci2018}]\label{thm:log-atomic}
	For $z\in \no^{>\N}$ the following are equivalent: 
	\begin{enumerate}
		\item $z$ is the simplest element of its level;
		\item $z$ is log-atomic; 
		\item $z \in \lambda_\no$.  
	\end{enumerate}
\end{thm}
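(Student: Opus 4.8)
\textbf{Proof plan for \prettyref{thm:log-atomic}.}

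The plan is to prove the cycle of implications $(1)\Rightarrow(2)\Rightarrow(3)\Rightarrow(1)$, using the characterisation of $\lambda_\no$ as a set of level-representatives together with \prettyref{prop:lambda-n} and the behaviour of $\log$ on the Conway normal form.

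First, for $(1)\Rightarrow(2)$: suppose $z$ is the simplest element of its level. I would argue that if $z$ were not log-atomic, then some $\log_n(z)$ fails to be a monomial, say $\log_n(z) = r\m(1+\eps)$ with $r\neq 1$ or $\eps\neq 0$; replacing $\log_n(z)$ by the monomial $\m$ and then applying $\exp_n$ produces an element $z'$ in the same level as $z$ (changing $\log_n$ by an Archimedean-equivalent amount does not change the level, since same Archimedean class implies same level) which is simpler than $z$ (because $\m$ is simpler than $r\m(1+\eps)$, the omega-map and $\exp$ preserve simplicity on the relevant classes, and one checks $\exp_n$ does too when applied to monomials in the appropriate range). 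This contradicts minimality, so $z$ is log-atomic. The delicate point is keeping track of simplicity under $\exp_n$; here I would lean on the facts recorded after \prettyref{defn:omega-map} and \prettyref{defn:g-h} that the omega-map and $g$ (hence $\exp$ on $\M^{>1}$) preserve simplicity, and handle the finite tower $\exp_n$ by induction on $n$.

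Next, $(2)\Rightarrow(3)$: suppose $z$ is log-atomic. I want to locate $z$ as some $\lambda_x$. Consider the set $L = \{\lambda_x : \lambda_x < z\}$ and $R = \{\lambda_x : \lambda_x > z\}$; since $\lambda_\no$ contains exactly one representative per level and the map $x\mapsto\lambda_x$ is increasing, these determine a cut, and I claim $z$ is itself the $\lambda_y$ for $y = \pair{\{x : \lambda_x < z\}}{\{x: \lambda_x > z\}}$. To verify this against \prettyref{defn:lambda}, I must check that $z$ satisfies the two defining clauses and is the \emph{simplest} such element: clause (2) is immediate from the choice of cut, and clause (1) — that $z$ is the simplest element of its level — is exactly where I invoke the (already-available) equivalence between log-atomicity and minimality in the level, i.e. the implication $(2)\Rightarrow(1)$, which should be proved alongside $(1)\Rightarrow(2)$ or extracted from \cite{Berarducci2018}. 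Then $z$ and $\lambda_y$ are both the simplest element of the same level, hence equal.

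Finally, $(3)\Rightarrow(1)$ is immediate from clause (1) of \prettyref{defn:lambda}: every $\lambda_x$ is by definition the simplest element of its level. The main obstacle I anticipate is the simplicity bookkeeping in $(1)\Rightarrow(2)$ and the symmetric direction $(2)\Rightarrow(1)$: one needs that truncating a Conway normal form to its leading monomial, and iterating $\log$/$\exp$ finitely many times, interacts predictably with the simplicity order. I would isolate this as a lemma: if $x\in\no^{>\N}$ is not log-atomic, there is a log-atomic $x^*$ in the same level with $x^*$ simpler than $x$ — proved by taking the first $n$ with $\log_n(x)$ not a monomial, replacing it by its leading monomial $\m$, and pushing back through $\exp_n$, using strong $\R$-linearity of $H$ (\prettyref{defn:H-G}) and simplicity-preservation of the omega-map to control each step. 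Once that lemma is in hand, all three implications fall out cleanly.
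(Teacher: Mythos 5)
The paper states this theorem without proof, citing \cite{Berarducci2018}, so there is no in-text argument to compare against; judged on its own terms, your plan has the right skeleton but a genuine gap at exactly the point you flag as delicate, namely the simplicity bookkeeping in $(1)\Rightarrow(2)$. You propose to push the replacement of $\log_n(z)=r\m(1+\eps)$ by $\m$ back up through $\exp_n$ using simplicity-preservation of the omega-map and of $g$. But the paper records no simplicity-preservation for $g$ (the remark after \prettyref{defn:g-h} only says $g$ is an increasing bijection), and even granting it, that would only control $\exp$ on \emph{monomials}; the element you must exponentiate at each stage, $\log_{n-1}(z)=\exp\bigl(r\m(1+\eps)\bigr)$, is not a monomial, and the paper explicitly warns that $\exp$ does not preserve simplicity in general ($\omega$ is simpler than $\log(\omega)$, yet $\exp(\omega)$ is not simpler than $\omega$). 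Strong $\R$-linearity of $H$ is likewise not the relevant fact. The tool that actually closes this step is the nested-truncation preorder of the section on simplicity and $\exp$: one checks $\m\ntrunc r\m(1+\eps)$ (via $\m\ntrunceq \m r\ntrunceq\log_n(z)$), and then clause (2) of the definition of $\ntrunceq$ (for purely infinite $\gamma\ntrunceq\delta$ one has $\pm e^{\gamma}\ntrunceq e^{\delta}s$), combined with the fact that leading terms are truncations, propagates the relation through each application of $\exp$, giving $\exp_n(\m)\ntrunc z$. Since nested truncation implies simplicity, $\exp_n(\m)$ is a strictly simpler element of the same level, which is the desired contradiction. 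Without routing the argument through $\ntrunceq$, the induction on the tower $\exp_n$ does not go through.

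A second, structural issue: your cycle $(1)\Rightarrow(2)\Rightarrow(3)\Rightarrow(1)$ does not close, because your proof of $(2)\Rightarrow(3)$ invokes $(2)\Rightarrow(1)$, which you do not prove. The missing piece is short and should be stated: two log-atomic numbers in the same level are equal (for suitable $n$ their $n$-th logarithms are Archimedean-equivalent monomials, hence the same monomial), so by $(1)\Rightarrow(2)$ the simplest element of the level of a log-atomic $z$ is itself log-atomic and therefore coincides with $z$. Once $(1)\Leftrightarrow(2)$ is in hand, your cut construction in $(2)\Rightarrow(3)$ is unnecessary and in any case not licensed as written, since the classes $\{x\mid \lambda_x<z\}$ and $\{x\mid\lambda_x>z\}$ are proper classes and the notation $\pair{L}{R}$ is reserved for sets: it suffices to quote the proposition following \prettyref{defn:lambda} that $\lambda_\no$ meets every level, note that levels are convex classes and hence have a unique simplest element, and conclude that this element is simultaneously $z$ and the $\lambda_x$ in its level.
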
 
Since $x\mapsto \lambda_x$ is increasing, it follows that $\lambda_{1/2}$ represents an intermediate level between $\omega= \lambda_0$ and $\exp(\omega)= \lambda_1$, while $\lambda_{-\omega}$ is lower than $\lambda_{-n} = \log_n(\omega)$ for all $n\in \N$. 
\prettyref{prop:lambda-n} can be extended as follows. 
\begin{thm}[\cite{Aschenbrenner2019}] We have $\lambda_{x-1} = \log(\lambda_x)$ for all $x\in \no$.	
\end{thm}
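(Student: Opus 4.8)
The plan is to prove this by induction on the simplicity of $x$, comparing the two functions $x \mapsto \lambda_{x-1}$ and $x \mapsto \log(\lambda_x)$ directly via the recursive characterization in \prettyref{defn:lambda}. First I would record the two facts that will do most of the work: by \prettyref{thm:log-atomic}, $\lambda_\no$ is exactly the class of log-atomic numbers, hence is closed under $\log$ (if $z$ is log-atomic then so is $\log(z)$, since $\log_n(\log(z)) = \log_{n+1}(z)$ is a monomial for all $n$); and $\log$ preserves levels in the sense that $z$ and $w$ have the same level if and only if $\log(z)$ and $\log(w)$ do — indeed $\log$ is an order-preserving bijection $\no^{>0}\to\no$ commuting with all the $\log_n$ after an index shift. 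Consequently $z\mapsto \log(z)$ is an order-preserving bijection from $\lambda_\no$ to $\lambda_\no$ that carries "the simplest element of its level" to "the simplest element of its level", so it acts as some order-automorphism $\lambda_x \mapsto \lambda_{\sigma(x)}$ of the index class $\no$. The entire content of the theorem is that $\sigma(x) = x - 1$.

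Next I would pin down $\sigma$ using the known values and the group-like structure. From \prettyref{prop:lambda-n} we have $\lambda_{-n} = \log_n(\omega)$ for all $n\in\Z$, so $\log(\lambda_{-n}) = \log_{n+1}(\omega) = \lambda_{-n-1}$, i.e. $\sigma(n) = n-1$ for every $n\in\Z$; in particular $\sigma$ agrees with translation by $-1$ on the integers, which are cofinal and coinitial enough to anchor the induction. The substantive step is the inductive one: assuming $\lambda_{x'-1} = \log(\lambda_{x'})$ for every option $x'$ of $x$ (both left and right), I want to show $\log(\lambda_x)$ satisfies the defining clauses of $\lambda_{x-1}$. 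It is the simplest element of its level because $\lambda_x$ is and $\log$ is a level-preserving simplicity-isomorphism on $\lambda_\no$ in the precise sense needed — here I would invoke the fact, implicit in \prettyref{defn:lambda} and \prettyref{thm:log-atomic}, that the simplest element of a level is a simplicity-monotone function of the "level parameter", and that $\log$ restricted to $\lambda_\no$ is simplicity-preserving because the defining recursion for $h$ (\prettyref{defn:h}) and for $\log$ on monomials is built from option-wise operations. For clause (2), the options of $x-1$ in canonical form are controlled by the options of $x$ together with the integers $\{0,1,2,\dots\}$ and $\{0,-1,-2,\dots\}$ appearing in the sign-expansion of $-1$; using the inductive hypothesis $\log(\lambda_{x'}) = \lambda_{x'-1}$ and the anchored values $\log(\lambda_{-n}) = \lambda_{-n-1}$, one checks that $\lambda_{(x-1)'} < \log(\lambda_x) < \lambda_{(x-1)''}$ for all options of $x-1$, which forces $\log(\lambda_x)$ to be at least as simple as $\lambda_{x-1}$; the reverse simplicity comparison comes from applying the same argument to $\exp$ in place of $\log$ (using $\exp(\lambda_{x-1})$ as a candidate for $\lambda_x$).

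The main obstacle I anticipate is the bookkeeping in clause (2): one must understand precisely which surreals are the canonical options of $x-1$ relative to those of $x$, and match each inequality with either the inductive hypothesis or the integer base case. This is the kind of "uniformity of the option sets under the group operation $x\mapsto x-1$" argument that is routine in spirit but delicate in execution, and it is where an appeal to the cofinality/coinitiality remark (that the $\lambda_{\pm\alpha}$, $\alpha\in\on$, are cofinal/coinitial) together with the density of levels (the Proposition preceding \prettyref{defn:lambda}) will be needed to rule out the possibility that $\log(\lambda_x)$ lands strictly between two levels with no $\lambda$-representative, or that it is strictly simpler than it should be. A cleaner alternative, which I would try first, is to avoid the option-set combinatorics entirely: show that $x \mapsto \log(\lambda_x)$ and $x\mapsto \lambda_{x-1}$ are both order-preserving bijections $\no\to\lambda_\no$ taking simplest-of-level to simplest-of-level and agreeing on $\Z$, and then argue by a simplicity induction that any two such maps coincide — reducing the whole theorem to the uniqueness statement that an order- and simplicity-preserving bijection of $\no$ is determined by its values on a coinitial–cofinal simplicity-dense set, which is essentially the uniqueness half of \prettyref{defn:lambda} itself.
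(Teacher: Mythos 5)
The paper offers no proof of this theorem --- it is quoted from \cite{Aschenbrenner2019} --- so there is nothing internal to compare against; I can only assess your sketch on its own terms. The first reduction you make is correct and worth keeping: log-atomicity is preserved by $\log$ (since $\log_n(\log z)=\log_{n+1}(z)$) and by $\exp$ on infinite monomials (\prettyref{defn:exp}(3)), so by \prettyref{thm:log-atomic} the map $\log$ restricts to an order-preserving bijection of $\lambda_\no$ onto itself; hence $\log(\lambda_x)=\lambda_{\sigma(x)}$ for a unique order-automorphism $\sigma$ of $\no$, and \prettyref{prop:lambda-n} gives $\sigma(n)=n-1$ for $n\in\Z$. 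That is the easy half.

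The gap is in the step that would force $\sigma(x)=x-1$ for all $x$. Your load-bearing claim that ``$\log$ restricted to $\lambda_\no$ is simplicity-preserving'' is false: $\lambda_0=\omega$ is simpler than $\lambda_1=\exp(\omega)=\orexp{\omega}$, yet $\log(\lambda_0)=\log(\omega)=\orexp{1/\omega}$ is not simpler than $\log(\lambda_1)=\omega$, since the only surreals simpler than $\omega$ are the natural numbers. This is exactly the failure the paper warns about in the section on simplicity and $\exp$, and it does not disappear on restriction to $\lambda_\no$: the index map $x\mapsto x-1$ itself fails to preserve simplicity ($0$ is simpler than $1$, but $-1$ is not simpler than $0$). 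Without that claim you have no argument that $\log(\lambda_x)$ is \emph{the simplest} log-atomic number satisfying the inequalities defining $\lambda_{x-1}$, which is the entire content of the theorem. The ``cleaner alternative'' does not rescue this: two order-preserving bijections $\no\to\lambda_\no$ agreeing on $\Z$ need not coincide, because $\Z$ is neither order-dense nor simplicity-generating in $\no$ (order-automorphisms of $\no$ fixing $\Z$ pointwise abound), and the extra condition ``sends simplest-of-level to simplest-of-level'' is automatic for any bijection onto $\lambda_\no$ and so adds nothing. What is actually required is a quantitative comparison of option sets: one must work with the explicit cofinal options in the recursion for $\lambda_x$ (the sets $\exp_n(k\log_n\lambda_{x'})$ and $\exp_n(2^{-k}\log_n\lambda_{x''})$ of \cite[Def.~5.12]{Berarducci2018}, to which \prettyref{defn:lambda} is equivalent), show that $\log$ carries them to sets mutually cofinal and coinitial with the corresponding option sets for $x-1$, and invoke a uniformity (cofinality) theorem for that recursion. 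That lemma is where the theorem lives, and it is precisely the part your sketch defers as ``routine in spirit but delicate in execution.''
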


%In \cite{Aschenbrenner2019} we also find the following observation. 
When $\alpha$ is an ordinal it is suggestive to think of $\lambda_{-\alpha}$ as the $\alpha$-times iterated $\log$ applied to $\omega$ and write $\lambda_{-\alpha} = \log_\alpha(\omega)$ as in \cite{Aschenbrenner2019}. 
\begin{rem}[\cite{Aschenbrenner2019}] We have:
	\begin{enumerate}
		\item 	For each ordinal $\alpha$, $\lambda_{-\alpha} = \omega^{\omega^{-\alpha}}$; 
		\item For each limit ordinal $\alpha$, $\lambda_{-\alpha}$ is the simplest surreal $x>\N$ such that $x < \lambda_{-\beta}$ for all ordinals $\beta<\alpha$. 
		%	\item for any ordinal $\alpha$, $\log(\oxp{\oxp{-\alpha}}) = \oxp{\oxp{-(\alpha+1)}} $. 
	\end{enumerate}
\end{rem}
The search for a satisfactory definition of $\log_\alpha(x)$ for a general $x\in \no^{>\N}$  is connected to the problem of representing $\no$ as a field of ``hyperseries''. 
For some recent work on this project see \cite{Aschenbrenner2017g,VandenDries2018,Bagayoko2019}.

\section{Simplicity and the omega-map} \label{sec:chopping}
We have encountered two important representations of surreal numbers: sign-expansions and Conway's normal forms. In general it is not easy to convert one to the other, but the following theorem (and related results in \cite{Ehrlich2011}) shows that normal forms are well behaved with respect to the simplicity relation. 

\begin{defn}
	Given $f,g\in \no$, we say that $g$ is a {\bf truncation} of $f$, if $f=\sum_{i<\alpha} \m_i r_i$ and $g = \sum_{i<\beta} \m_i r_i$ for some $\beta<\alpha$, where both expressions are in normal form. 
\end{defn}

\begin{thm}[\protect{\citet{Conway76}, \citet[\S 6]{Gonshor1986}}]\label{thm:omega-simplicity} We have:
	\begin{enumerate}
		\item If $x$ is simpler than $y$, then $\orexp{x}$ is simpler than $\orexp{y}$.
		\item If $\m$ is a monomial and $r\in \R^*$, then $\pm \m$ is simpler or equal to $\m r$, where $\pm 1$ is the sign of $r$. 
		\item $\birth(x) \leq \birth(\orexp{x})$. 
		%	\item If $f = \sum_{i<\alpha} \m_ir_i\in \no$ and $\beta<\alpha$, then $g=\sum_{i<\beta}\m_ir_i$ is simpler than $f$. 
		\item If $g$ is a proper truncation of $f$, then $g$ is simpler than $f$. 
		\item if $\m_i r_i$ is a term of $f = \sum_{i<\alpha} \m_ir_i$, then $\birth(\m_ir_i) \leq \birth(f)$; if $i+1<\alpha$  the inequality is strict because in this case $\m_ir_i$ is a term of a proper truncation of $f$. 
		%unless $\alpha$ is a successor ordinal and $x_i$ is the smallest index, that is $i+1 = \alpha$. 
	\end{enumerate}
\end{thm}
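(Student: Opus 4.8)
The plan is to reduce everything to one elementary structural fact, which I shall call $(\star)$: the simplest element of a nonempty convex subclass of $\no$ (equivalently, its element of smallest birthday) is simpler than or equal to every element of that class. This follows from the tree structure of $\no$ by comparing longest common prefixes of sign-expansions, exactly as in the observation that convex classes have a simplest element. Granting $(\star)$, the items (1)--(3) are short. For (1) I would simply invoke the remark following \prettyref{defn:omega-map}, which already records that $x$ simpler than $y$ implies $\orexp{x}$ simpler than $\orexp{y}$. Item (3) then follows at once: the $\omega$-map is injective, being an order isomorphism onto the class of surreal monomials, and by (1) it carries the proper ancestors of $x$ to pairwise distinct proper ancestors of $\orexp{x}$, so $\birth(\orexp{x}) \ge \birth(x)$. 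For (2), the sign-reversal map $z \mapsto -z$ preserves birthdays and the simplicity relation, so I may assume $r > 0$; then $\m$ and $\m r$ lie in the same Archimedean class, whose positive part is a convex class whose element of smallest birthday is $\m$, by the very definition of a surreal monomial, and $(\star)$ yields that $\m$ is simpler than or equal to $\m r$, with equality exactly when $r = 1$.

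The heart of the matter is (4), which I would prove by strong induction on $\birth(f)$. Write $f = \sum_{i<\alpha} \m_i r_i$ in normal form and let $g = \sum_{i<\beta} \m_i r_i$, with $\beta < \alpha$, be a proper truncation. Consider the class $C_g = \{\, w \in \no : w - g \prec \m_i \text{ for every } i < \beta \,\}$. One checks immediately that $C_g$ is convex, that $g \in C_g$, and that $f \in C_g$, since $f - g \asymp \m_\beta \prec \m_i$ for $i < \beta$; moreover $w \in C_g$ holds exactly when the normal form of $g$ is an initial truncation of that of $w$, because in that case every monomial of $w - g$ is strictly dominated by every monomial of $g$, so that concatenating the two normal forms yields the normal form of $w$. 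It therefore suffices to show that $g$ has the smallest birthday among the elements of $C_g$: by $(\star)$ it is then simpler than or equal to $f \in C_g$, and since $g \ne f$ it is strictly simpler. So suppose $w \in C_g$ with $\birth(w) < \birth(g)$. Using the estimate $\birth(g) = \birth(\sum_{i<\beta}\m_i r_i) \le \birth(f)$ from the proof of \prettyref{thm:normal-form}, we obtain $\birth(w) < \birth(f)$; also $w \ne g$, so $g$ is a proper normal-form truncation of $w$, and the inductive hypothesis applied to $w$ gives that $g$ is simpler than $w$, whence $\birth(g) < \birth(w)$ --- contradicting $\birth(w) < \birth(g)$. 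Hence $g$ is the smallest-birthday element of $C_g$, as needed. (Alternatively, one can derive (4) from Gonshor's explicit description of the sign-expansion of a normal form in \citet[\S 5]{Gonshor1986}, which displays $g$ as a literal initial segment of $f$.)

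Item (5) then follows by induction on $\alpha$. If $i + 1 < \alpha$, then $\m_i r_i$ is the final term of the normal form of the proper truncation $f_{i+1} = \sum_{j \le i} \m_j r_j$, which by (4) is simpler than $f$, so $\birth(f_{i+1}) < \birth(f)$; the inductive hypothesis applied to $f_{i+1}$ gives $\birth(\m_i r_i) \le \birth(f_{i+1})$, hence $\birth(\m_i r_i) < \birth(f)$. The remaining case is the last term, $\alpha = \delta + 1$ and $i = \delta$, where $f = f_\delta + \m_\delta r_\delta$ and $f_\delta$ is an ancestor of $f$ by (4); here I would argue directly that $\birth(\m_\delta r_\delta) \le \birth(f)$ by reading off from the sign-expansion of $f$ --- which is that of $f_\delta$ followed by the block contributed by the last term --- that this final block has length at least $\birth(\m_\delta r_\delta)$.

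The main obstacle is (4), and more precisely the fine interplay between normal forms, birthdays and sign-expansions. The self-contained induction above rests on the estimate $\birth(\text{truncation}) \le \birth(f)$, which belongs to the very circle of ideas surrounding \prettyref{thm:normal-form}, so some care is needed to keep the argument non-circular; and the last-term estimate needed to finish (5) ultimately rests on Gonshor's combinatorial description of the sign-expansion of $\sum_{i<\alpha} \m_i r_i$. Getting that bookkeeping exactly right is where the real work lies.
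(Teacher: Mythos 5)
The paper itself does not prove this theorem: it is quoted from Conway and from \citet[\S 6]{Gonshor1986}, where the argument goes through an explicit computation of the sign-expansion of a normal form $\sum_{i<\alpha}\m_i r_i$. So there is no in-paper proof to compare yours with, and I assess the proposal on its own terms. Your reduction of (2) and (4) to the fact $(\star)$ --- that the simplest element of a nonempty convex class is an ancestor of, or equal to, every element of that class --- is correct and is a genuinely cleaner organizing idea than the brute-force sign-expansion computation: $C_g$ is convex, contains both $g$ and $f$, and consists exactly of the surreals admitting $g$ as a truncation, so once $g$ is identified as its simplest element, (4) follows. Item (3) via injectivity of the omega-map on chains of ancestors is also fine. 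For (1), however, you are merely quoting the unproved sentence after \prettyref{defn:omega-map}; that sentence is itself an instance of the theorem and requires the usual uniformity/cofinality induction on the recursive definition of $\orexp{x}$, so nothing is actually established there.

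The genuine gap is in the induction for (4) and in the last-term case of (5), and you have correctly located it yourself. To apply the inductive hypothesis to a hypothetical $w\in C_g$ with $\birth(w)<\birth(g)$ you need $\birth(g)\le\birth(f)$, i.e.\ that a truncation never has larger birthday than the full series. That estimate is only asserted, not proved, in the proof of \prettyref{thm:normal-form} (``it can be shown that\ldots''), and it is not materially easier than (4) itself: its known proofs run through the same analysis of how each term $\m_i r_i$ contributes a block of signs to the sign-expansion of $f$, which is also precisely what you would need for the bound $\birth(\m_\delta r_\delta)\le\birth(f)$ closing the final case of (5). So the proposal is a correct and attractive reduction of the theorem to that one combinatorial lemma about sign-expansions of normal forms, but that lemma is where all of the content of Gonshor's \S 5 lives, and as written you do not discharge it.
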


\begin{cor} \label{cor:simplicity-index}If $f=\sum_{i<\alpha}\orexp{x_i}r_i$ is in normal form, then for every $i<\alpha$ we have $\birth(x_i) \leq \birth(f)$ and the inequality is strict if $i+1<\alpha$. 
\end{cor}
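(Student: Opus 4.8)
The plan is to obtain the two inequalities by chaining together parts (2), (3), and (5) of \prettyref{thm:omega-simplicity}, applied to the monomial $\m_i = \orexp{x_i}$. Fix $i<\alpha$ and write $f = \sum_{i<\alpha}\m_i r_i$ in Conway normal form, so that $\m_i r_i$ is a term of $f$.

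First I would invoke part (3) of \prettyref{thm:omega-simplicity}, which gives $\birth(x_i) \leq \birth(\orexp{x_i}) = \birth(\m_i)$. Next I would use part (2): since $\m_i$ is a monomial and $r_i \in \R^*$, the surreal $\pm\m_i$ (with the sign of $r_i$) is simpler than or equal to $\m_i r_i$, and hence $\birth(\m_i) = \birth(\pm\m_i) \leq \birth(\m_i r_i)$. Finally, part (5) of \prettyref{thm:omega-simplicity} states exactly that $\birth(\m_i r_i) \leq \birth(f)$, with strict inequality when $i+1<\alpha$. Concatenating these three steps yields
\[
\birth(x_i) \leq \birth(\orexp{x_i}) \leq \birth(\m_i r_i) \leq \birth(f),
\]
and the inequality $\birth(x_i)\leq \birth(f)$ is strict as soon as the last inequality is strict, i.e. whenever $i+1<\alpha$.

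There is essentially no obstacle here: the corollary is a direct bookkeeping consequence of the theorem, the only care needed being to identify $\m_i$ with $\orexp{x_i}$ and to make sure the $\birth$ of a monomial and of its sign-negative coincide (both have the same sign-expansion length up to the leading sign, which is what part (2) encodes). If one wanted to be fully self-contained one could note that the strictness propagates through the chain because the first two inequalities are between birthdays that are $\leq \birth(\m_i r_i)$, so the strict drop at the last step is preserved.
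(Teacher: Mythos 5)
Your proof is correct and follows exactly the route the paper intends: the corollary is stated without proof as an immediate consequence of Theorem~\ref{thm:omega-simplicity}, and your chain $\birth(x_i) \leq \birth(\orexp{x_i}) \leq \birth(\m_i r_i) \leq \birth(f)$ via parts (3), (2), and (5) is the natural derivation, with strictness correctly propagated from the last link. No issues.
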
	
If $f$ is as in the corollary, the only possibility to have $\birth(x_i) = \birth(f)$, is that $\alpha$ is a successor ordinal and $\orexp{x_i}$ is the smallest monomial in the support of $f$, that is, $i+1=\alpha$. Moreover in this case $r_{i+1} = \pm 1$, so we can write $f = \sum_{j<i} \orexp{x_i} r_j \pm \omega^{x_i}$. 
%since we are assuming $\birth(f)= \birth(x_i)$, we necessarily have $x_i \neq 0$. 
A special case of this situation is when $f = \orexp{f}$. The surreal numbers with this property are called {\bf epsilon-numbers}, and they include the ordinal numbers with the homonymous property. 
%shows that normal forms are well behaved with respect to the simplicity relation.  
%Further information on the relation between normal forms and birthdays is given by the following proposition. 
%A more complete statement is given by \prettyref{prop:omega-simplicity} below, but let us first make the following remark. 
%\begin{rem}
%	If $f= \sum_{i<\alpha}\m_i r_i$ is the normal form of a non-zero surreal number, then the support $\{\m_i\mid i<\alpha\}$ of $f$ has always a greatest element $\m_0$, but it may not have a smallest element. The smallest element exists only if $\alpha$ is a successor ordinal. Indeed, if $i+1 = \alpha$, then $\m_i$ is the smallest element in the support.  
%\end{rem}

\section{Simplicity and exp}
%In general $\exp$ does not behave very well with respect to the simplicity relation. 
In general it is not true that if $x$ is simpler than $y$, then $\exp(x)$ is simpler than $\exp(y)$: it suffices to take $x = \omega$ and $y= \log(\omega)$. It can however be proved that if $x$ is a truncation of $y$, then $\exp(x)$ is simpler then $\exp(y)$. This suggests the following definition, which can be useful in inductive proofs based on the exponential normal form. 

\begin{defn}
	Define $\ntrunceq$ as the smallest preorder on $\no^*$ such that 
	\begin{enumerate}
		\item if $g$ is a non-zero truncation of $f$, then $g\ntrunceq f$; 
		%\item if $g$ and $f$ are purely infinite and $g \ntrunceq f$, then $e^g \ntrunceq e^f $ and $-e^g \ntrunceq -e^f$; 
		\item if $g$ and $f$ are purely infinite and $g \ntrunceq f$, then $\pm e^g \ntrunceq e^f r$, where $r\in \R^*$ and $\pm$ is the sign of $r$. 
		More generally, $x \pm e^g \ntrunceq x + e^f r$ where $x\in \no$ is such that all the monomials in the support of $x$ are greater than both $e^g$ and $e^f$.  
		%	\item if in addition we assume that $e^f$ and $e^g$ are smaller than all the monomials in the support of $\sum_{i<\alpha} \m_i r_i$, then $\sum_{i<\alpha} \m_i r_i \pm e^g \; \ntrunceq \; \sum_{i<\alpha} \m_i r_i + e^f r$
		%	\item if $r\in \R^*$ and $\m$ is a monomial, then $\pm \m \ntrunceq \m r$, where $\pm 1$ is the sign of $r$. 
		%		\item if $\m$ is the smallest monomial in the support of $f = \sum_{i<\alpha} \m_i r_i + \m r$ and $\pm \m' \ntrunceq \m r$, then  $\sum_{i<\alpha} \m_i r_i + \m' \ntrunceq f$. 
	\end{enumerate} 
	If $f \ntrunceq g$, we say that $f$ is a {\bf nested truncation} of $g$.
	We write $f \ntrunc g$ if we have both $f \ntrunceq g$ and $f\neq g$. 
\end{defn}
%It is easy to see that if $x,y\in \no^{>\N}$ are such that $y \ntrunc x$, then $y$ has the same level as $x$ \cite[Lemma 5.7]{Berarducci2018}. 
\begin{thm}[\protect{\cite[Thm. 4.26]{Berarducci2018}}] 
	If $y  \ntrunc x$, then $y$ is simpler than $x$. 
\end{thm}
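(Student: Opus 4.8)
The statement asserts that the relation $\ntrunc$ refines the simplicity order; since $\ntrunceq$ is defined as the smallest preorder generated by clauses (1) and (2), the natural strategy is to prove that the relation ``$y$ is simpler than $x$ (or $y=x$)'' is itself a preorder containing both generating relations, whence it contains $\ntrunceq$. Simplicity is transitive and reflexive, so it is a preorder; thus the whole proof reduces to verifying that each generator is contained in it. For clause (1) there is nothing to do: \prettyref{thm:omega-simplicity}(4) states exactly that a proper truncation of $f$ is simpler than $f$, and a truncation equal to $f$ gives equality. The entire weight of the argument therefore falls on clause (2).

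\smallskip

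For clause (2) I would first treat the ``pure'' case $\pm e^g \ntrunceq e^f r$, where $g,f$ are purely infinite, $g\ntrunceq f$, and $r\in\R^*$ has sign $\pm$. Arguing by induction on the simplicity of the pair (or on $\mathrm{birthday}(f)$), one may assume inductively that $g$ is simpler than or equal to $f$. Write everything via the exponential normal form / the isomorphism $e^\gamma = \orexp{G(\gamma)}$ of \prettyref{defn:H-G}: we have $e^g=\orexp{G(g)}$ and $e^f=\orexp{G(f)}$, and since $G$ is the inverse of the strongly $\R$-linear $H$ built from the simplicity-preserving bijections $g,h$ of Definitions \ref{defn:g-h} and \ref{defn:h}, one checks that $g$ simpler than $f$ (purely infinite) forces $G(g)$ simpler than $G(f)$, hence by \prettyref{thm:omega-simplicity}(1) that $e^g=\orexp{G(g)}$ is simpler than $e^f=\orexp{G(f)}$. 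Then \prettyref{thm:omega-simplicity}(2) gives that $\pm e^f$ is simpler than or equal to $e^f r$; combining, $\pm e^g$ is simpler than $e^f r$ (the case $g=f$ still yields $\pm e^f$ simpler-or-equal to $e^f r$, consistent with the preorder).

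\smallskip

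For the ``more general'' case one has $x\in\no$ with every monomial in $\mathrm{supp}(x)$ strictly larger than both $e^g$ and $e^f$, and must show $x\pm e^g$ is simpler than $x+e^f r$. Here $x+e^fr$ has $x$ as a proper truncation tail initial segment: since the monomials of $x$ dominate $e^f$, the normal form of $x+e^f r$ is literally the normal form of $x$ followed by the term $e^f r$, so $x$ is a proper truncation of $x+e^f r$; likewise $x$ is a truncation of $x\pm e^g$. The point is to compare $x\pm e^g$ with $x+e^f r$ when both share the common initial segment $x$. I would invoke the sign-expansion behaviour of normal forms — specifically the results of \citet[\S 6]{Gonshor1986} and \cite{Ehrlich2011} underlying \prettyref{thm:omega-simplicity} — to reduce the comparison of $x\pm e^g$ and $x+e^f r$ to the comparison of $\pm e^g$ and $e^f r$ ``relative to $x$'': appending the already-settled fact that $\pm e^g$ is simpler-or-equal to $e^f r$ after the common sign-sequence of $x$ shows $x\pm e^g$ simpler-or-equal to $x+e^f r$, and the inequality is strict exactly when $\pm e^g\neq e^f r$, i.e. when $y\neq x$.

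\smallskip

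\textbf{Main obstacle.} The delicate point is the last one: controlling how simplicity of normal-form tails propagates when a common purely-infinite head $x$ is prepended. This requires knowing that the sign-expansion of a surreal in normal form $\sum_{i<\alpha}\m_i r_i$ decomposes as the concatenation of blocks contributed by successive terms, with the block of $\m_i r_i$ depending only on $\m_i$, $r_i$ and the preceding terms in a monotone way — precisely the content of Gonshor's and Ehrlich's analysis cited in \prettyref{thm:omega-simplicity}. Once that concatenation picture is in hand, together with \prettyref{thm:omega-simplicity}(1)(2)(4), the induction closes routinely; without it, the ``relative to $x$'' comparison is the crux. I would therefore organise the write-up so that this concatenation/monotonicity fact is isolated as the single technical lemma, and everything else is a short induction on birthdays using the already-established structural results.
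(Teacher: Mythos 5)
There is a genuine gap, and it sits exactly where you locate ``the entire weight of the argument'': clause (2). Your plan is to show that the relation ``simpler than or equal to'' is itself a preorder closed under the two generating rules, and for rule (2) you reduce this to the claim that for purely infinite $g$ simpler than $f$ one gets $G(g)$ simpler than $G(f)$, hence $e^{g}=\orexp{G(g)}$ simpler than $e^{f}=\orexp{G(f)}$ by \prettyref{thm:omega-simplicity}(1). That claim is false, and the paper itself supplies the counterexample at the start of the section on simplicity and $\exp$: take the pair $\omega$ and $\log\omega$. Both are purely infinite, and $\omega$ is simpler than $\log\omega$ (being the simplest element of the convex class $\no^{>\N}$, it is an ancestor of every other element of that class), yet $\exp(\omega)$ is not simpler than $\exp(\log\omega)=\omega$. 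Concretely, $G(\omega)=\omega$ while $G(\log\omega)=1$, so $G$ does not preserve simplicity; equivalently, the function of \prettyref{defn:g-h} is not simplicity-preserving, since $1$ is simpler than $1/\omega$ but it sends $1$ to $1$ and $1/\omega$ to $0$. Since closure of ``simpler-or-equal'' under rule (2) would require precisely this false implication, the ``smallest preorder contained in a closed preorder'' strategy cannot work with that inductive invariant.

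What is needed instead (and what the cited proof in Berarducci--Mantova does; the present paper only states the theorem with a citation) is an induction on the derivation of $y\ntrunceq x$ in which the hypothesis retained when applying rule (2) is the full relation $g\ntrunceq f$, not merely ``$g$ is simpler than $f$''. The crux is then the genuinely harder lemma that a \emph{nested truncation} between purely infinite elements is turned by $\exp$ into a simplicity relation: if $g\ntrunceq f$ in $\no^{\uparrow}$, then $\pm e^{g}$ is simpler than or equal to $e^{f}r$, and similarly with a common head $x$ prepended. Proving this requires Gonshor's explicit sign-sequence formulas for $\orexp{x}$ and for $\exp$ on $\no^{\uparrow}$; \prettyref{thm:omega-simplicity}(1)--(2) alone do not suffice. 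Your remarks on the concatenation behaviour of sign expansions for the prepended head $x$ are in the right spirit, but they rest on the broken pure case, so the induction does not close as written.
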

It follows that $\ntrunc$ is a well founded relation, so we have an associated notion of rank defined by transfinite induction. 
\begin{defn}
	The nested truncation rank $\nr(x)\in \on$ of $x\in \no^*$ is recursively defined by $\nr(x) = \sup \{\nr(y)+1 \mid y \ntrunc x\}$.  
\end{defn} 

We have now an analogue of \prettyref{thm:omega-simplicity} and \prettyref{cor:simplicity-index}. 
% which can be useful in inductive proofs based on the Ressayre form.  
\begin{prop}[\cite{Berarducci2018}]\label{prop:nr-decreases} We have: 
	\begin{enumerate}
		\item if $g$ is a proper non-zero truncation of $f$, then $\nr(g)<\nr(f)$.
		\item 	If $f=\sum_{i<\alpha}\m_i r_i$ is in normal form, then for every $i<\alpha$ we have $\nr(\m_i r_i) \leq \nr(f)$ and the inequality is strict if $i+1<\alpha$.
		\item $\nr(\pm e^{\gamma}) = \nr(\gamma)$ for all $\gamma\in \no^\uparrow$; 
		\item if $r\neq \pm 1$, then $\nr(e^\gamma r) = \nr(e^{\gamma})+1$ for all $\gamma\in \no^\uparrow$;
		\item For $x\in \no^*$, we have
		$\nr(x) = 0$ if and only if $x$ has the form $\pm \lambda^{\pm 1}$ where $\lambda$ is log-atomic. 
	\end{enumerate}
\end{prop}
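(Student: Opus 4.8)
Since $\ntrunc$ is well founded, $\nr$ is a well-defined, $\ntrunceq$-monotone ordinal-valued function. I would first record two elementary facts, both obtained by inspecting the two generating clauses of $\ntrunceq$ and closing under reflexivity and transitivity: (A) if $a\ntrunceq b$ then $a$ and $b$ have the same leading monomial up to a positive real factor, so in particular $a\asymp b$ and $a,b$ have the same sign; and (B) a nested truncation of a purely infinite surreal is purely infinite — the only point being that in the second clause $g\ntrunceq f$ with $e^f>1$, so $f>0$, so by (A) also $g>0$ and $e^g>1$; hence along any witnessing chain each term stays in $\no^\uparrow$. Claim (1) is then immediate: a proper non-zero truncation $g$ of $f$ satisfies $g\ntrunc f$ by the first clause, so $\nr(g)+1\le\nr(f)$.

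\textbf{(3) and (4).} I would prove these together by induction on $\nr(\gamma)$, establishing (3) for $\gamma$ and then deducing (4) for the same $\gamma$. Since $\pm e^\gamma$ is a one-term normal form it has no truncation predecessor, so by (A) and (B) the last nontrivial step of any chain witnessing $y\ntrunc\pm e^\gamma$ must be a second-clause step $\pm e^g\ntrunc\pm e^\gamma$ with $g\ntrunc\gamma$ and $g\in\no^\uparrow$; conversely, every $g\ntrunc\gamma$ gives $\pm e^g\ntrunc\pm e^\gamma$ by the second clause. Hence $\nr(\pm e^\gamma)=\sup\{\nr(\pm e^g)+1:g\ntrunc\gamma\}=\sup\{\nr(g)+1:g\ntrunc\gamma\}=\nr(\gamma)$, the middle equality by the inductive hypothesis. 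For (4), if $r\ne\pm1$ there is the extra predecessor $\pm e^\gamma\ntrunc e^\gamma r$ (second clause with $g=f=\gamma$), while every other $y\ntrunc e^\gamma r$ still factors through some $\pm e^g$ with $g\ntrunceq\gamma$ and so has $\nr(y)\le\nr(\gamma)$ by (3); therefore $\nr(e^\gamma r)=\nr(\gamma)+1=\nr(e^\gamma)+1$.

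\textbf{(2).} Write $f=\sum_{j<\alpha}e^{\gamma_j}r_j$ in exponential normal form. The truncation $f_{(i)}=\sum_{j\le i}e^{\gamma_j}r_j$ is proper iff $i+1<\alpha$, so by (1) it is enough to prove $\nr(e^{\gamma_i}r_i)\le\nr(f_{(i)})$, i.e.\ the case $i+1=\alpha$; and by (3)/(4) this reduces to showing $\nr(\gamma_i)+[r_i\ne\pm1]\le\nr(h+e^{\gamma_i}r_i)$, where $h=\sum_{j<i}e^{\gamma_j}r_j$ has all monomials $\succ e^{\gamma_i}$. I would obtain this from the general claim, proved by induction on $\nr(\gamma)$ (first for $s=\pm1$, then for arbitrary $s$): \emph{if every monomial of $x\in\no$ is $\succ e^\gamma$, then $\nr(x+e^\gamma s)\ge\nr(e^\gamma s)$.} For $s=\pm1$, given $g\ntrunc\gamma$ one seeks a predecessor of $x\pm e^\gamma$ of rank $\ge\nr(g)$; when $e^g\prec$ every monomial of $x$ the second clause yields $x\pm e^g\ntrunc x\pm e^\gamma$, and the inductive hypothesis applied to $g$ gives $\nr(x\pm e^g)\ge\nr(g)$. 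The hard part is the complementary case, where $e^g$ is \emph{not} strictly dominated by all monomials of $x$; this forces $g>\gamma$ (as $e^g$ then dominates some monomial $\succ e^\gamma$), and one must split according to where $e^g$ sits among the monomials of $x$ and produce the required predecessor from a truncation of $x$ (equivalently, of $f$) and its terms, via (1) and the inductive hypothesis. Making this bookkeeping precise — reconciling an arbitrary nested truncation of the exponent $\gamma_i$ with the actual normal form of $f$ — is the delicate heart of the proposition, and I expect most of the work to lie here.

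\textbf{(5).} For the forward implication, $\nr(x)=0$ forces the normal form of $x$ to be a single term $\m r$ (no truncation predecessor), then $r=\pm1$ and, writing $\m=e^\gamma$ with $\gamma=\log\m\in\no^\uparrow$, also $\nr(\gamma)=0$ (no second-clause predecessor). Iterating, and using that each successive exponent lies in $\no^\uparrow$ — which forces all but the first one or two ``signs'' of the resulting exponential tower to be $+$ — one finds that $x$ is, up to sign, either a monomial all of whose iterated logarithms are monomials or the reciprocal of such a monomial; that is, $x=\pm\lambda$ or $x=\pm\lambda^{-1}$ with $\lambda$ log-atomic (\prettyref{thm:log-atomic}). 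Conversely, if $\lambda$ is log-atomic then $\lambda,\log\lambda,\log_2\lambda,\dots$ are all one-term normal forms, so by the description of predecessors from (A), (B) and the stage-by-stage construction of $\ntrunceq$, a hypothetical $\ntrunc$-predecessor of $\pm\lambda^{\pm1}=\pm e^{\pm\log\lambda}$ would have to descend through $\log_n\lambda$ for every $n\in\N$, which cannot be achieved in finitely many stages since a single monomial has no truncation predecessor. Hence $\nr(\pm\lambda^{\pm1})=0$.
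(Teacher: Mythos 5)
The paper itself gives no proof of this proposition: it is quoted verbatim from \cite{Berarducci2018}, so there is no in-text argument to measure yours against, and your proposal has to stand on its own. Parts (1), (3) and (4) do stand: the analysis of the last nontrivial step of a witnessing chain into a one-term normal form, and the simultaneous induction on $\nr(\gamma)$ that proves (3) before deducing (4), is exactly the right mechanism. Two caveats, though. Your preliminary claim (A) is false as stated: for a second-clause step with $x=0$ the two sides have leading monomials $e^g$ and $e^f$, which need not be asymptotic (take $g=\omega$, $f=\omega-\omega^{1/2}$, so that $e^g/e^f=e^{\omega^{1/2}}$ is infinite); only the sign-preservation consequence survives, and fortunately that is all you actually use. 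And in (5) the degenerate case $\m=1$ is not covered: for $x=r\in\R^*$ the normal form is the single term $e^0r$, clause 2 is vacuous because $0\notin\no^*$, hence $\nr(r)=0$ although $r$ is not of the form $\pm\lambda^{\pm1}$; your step ``then $r=\pm1$'' silently invokes (4), which needs $\gamma\neq0$. (This is arguably an imprecision in the statement as reproduced here rather than in your argument; the rest of your (5), including the stage-by-stage induction showing that a nontrivial predecessor of $\pm\lambda^{\pm1}$ would have to descend through $\log_n\lambda$ for every $n$, is sound.)

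The genuine gap is the one you flag yourself, in (2). The reduction to the claim that $\nr(x+e^\gamma s)\ge\nr(e^\gamma s)$ when every monomial of $x$ is $\succ e^\gamma$ is correct, and the easy half (when $g\ntrunc\gamma$ has $e^g$ strictly below every monomial of $x$, apply clause 2 and the inductive hypothesis) works. But the complementary case is not deferrable bookkeeping: a truncation of a purely infinite $\gamma$ can exceed $\gamma$ (again $\omega\ntrunc\omega-\omega^{1/2}$), so $e^g$ can genuinely overtake monomials of $h=\sum_{j<i}e^{\gamma_j}r_j$ --- e.g.\ for $f=e^{\omega}+e^{\omega-\omega^{1/2}}$ the truncation $g=\omega$ of the last exponent gives $e^g$ equal to the leading monomial of $h$ --- and then clause 2 simply does not produce a predecessor of $f$ of the form $h\pm e^g$. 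One must instead locate $e^g$ among the $e^{\gamma_j}$ and manufacture the required predecessor from a shorter truncation of $f$ together with (1) and the inductive hypothesis applied to an earlier term; this case analysis is where essentially all of the content of the proposition lives (it is the engine behind Corollary~\ref{cor:sameNR} and Propositions~\ref{prop:T4} and~\ref{prop:dominant}). As written, (2) is asserted but not proved, so the proposal is incomplete at its central point.
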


\begin{cor}\label{cor:sameNR}
	If $f = \sum_{i<\alpha} e^{\gamma_i} r_i$ is in exponential normal form, then $\nr(\gamma_i) \leq \nr(f)$ for all $i<\alpha$, and $\nr(\gamma_i) < \nr(f)$ if $i+1<\alpha$. If $f$ has the same nested truncation rank of one of the exponents $\gamma_i$ in its exponential normal form, then
	$f = \sum_{j<i} e^{\gamma_j} r_j \pm e^{\gamma_i}$, with $\gamma_i$ being the smallest exponent. 
\end{cor}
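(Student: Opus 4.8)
The plan is to deduce the corollary from \prettyref{prop:nr-decreases} alone, in exact parallel with the way \prettyref{cor:simplicity-index} was obtained from \prettyref{thm:omega-simplicity}. The point is that in the exponential normal form $f = \sum_{i<\alpha} e^{\gamma_i} r_i$ the monomials in the support of $f$ are precisely $\m_i = e^{\gamma_i}$ (with $\gamma_i \in \no^\uparrow$), so everything follows by combining the statement about monomials of a normal form with the computation of $\nr$ on terms of the shape $\pm e^\gamma$ and $e^\gamma r$.

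First I would record, from part (2) of \prettyref{prop:nr-decreases}, that $\nr(e^{\gamma_i} r_i) \le \nr(f)$ for all $i<\alpha$, with strict inequality when $i+1<\alpha$. Then I would bound $\nr(\gamma_i)$ from above by $\nr(e^{\gamma_i} r_i)$: if $r_i = \pm 1$ then $e^{\gamma_i} r_i = \pm e^{\gamma_i}$ and part (3) gives $\nr(e^{\gamma_i} r_i) = \nr(\gamma_i)$; if $r_i \neq \pm 1$ then part (4) together with part (3) gives $\nr(e^{\gamma_i} r_i) = \nr(e^{\gamma_i})+1 = \nr(\gamma_i)+1$. In all cases $\nr(\gamma_i) \le \nr(e^{\gamma_i} r_i)$, and chaining the two inequalities yields $\nr(\gamma_i) \le \nr(f)$, with strict inequality when $i+1 < \alpha$. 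This settles the first assertion.

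For the second assertion I would argue by equality of the chain: assume $\nr(\gamma_i) = \nr(f)$ for some $i$; then $\nr(\gamma_i) \le \nr(e^{\gamma_i} r_i) \le \nr(f) = \nr(\gamma_i)$ forces $\nr(e^{\gamma_i} r_i) = \nr(\gamma_i) = \nr(f)$. From $\nr(e^{\gamma_i} r_i) = \nr(e^{\gamma_i})$ and part (4) (no ordinal equals its successor) we get $r_i = \pm 1$. From $\nr(e^{\gamma_i} r_i) = \nr(f)$ and the strictness clause of part (2), the case $i+1<\alpha$ is excluded; since $i<\alpha$ this forces $i+1 = \alpha$, so $\alpha$ is a successor ordinal and $e^{\gamma_i}$ is the smallest monomial in the support of $f$, i.e.\ $\gamma_i$ is the smallest exponent. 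Writing out the sum gives $f = \sum_{j<i} e^{\gamma_j} r_j + e^{\gamma_i} r_i = \sum_{j<i} e^{\gamma_j} r_j \pm e^{\gamma_i}$, as claimed. There is no real obstacle here — all the substance (well-foundedness of $\ntrunc$, the behaviour of $\nr$ under truncation, and on $\pm e^\gamma$ versus $e^\gamma r$) is already packaged into \prettyref{prop:nr-decreases}; the only step deserving an explicit line is the trivial ordinal-arithmetic deduction that $\neg(i+1<\alpha)$ and $i<\alpha$ together give $i+1=\alpha$, which is what makes $\alpha$ a successor.
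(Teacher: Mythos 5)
Your proposal is correct and is exactly the derivation the paper intends: the corollary is stated without proof as a consequence of Proposition~\ref{prop:nr-decreases}, and your chain $\nr(\gamma_i)\le\nr(e^{\gamma_i}r_i)\le\nr(f)$ via parts (2)--(4), together with the equality analysis forcing $r_i=\pm1$ and $i+1=\alpha$, mirrors the remark the paper makes after Corollary~\ref{cor:simplicity-index} in the birthday setting.
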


\section{Branches in the tree of iterated exponents}
Given a surreal number $f=\sum_{i<\alpha} e^{\gamma_i}r_i$ written in exponential normal form we call each $\gamma_i\in \no^\uparrow$ an {\bf exponent} of $f$. Taking the exponents of the exponents and iterating the process we obtain a tree-like structure, where the children of $f$ are its exponents. We can then consider the infinite branches through this tree, as in the definition below. 
%The following definition formalizes the idea, and should not be confused with the definition of {\em path} in \prettyref{defn:path}.  
\begin{defn}
A {\bf branch} is a sequence $(B_n)_{n\in \N}$ of surreal numbers such that $B_{n+1}$ is an exponent of $B_n$ for each $n\in \N$. 
\end{defn}

 Notice that there are no branches $B$ starting with a real number $r= B_0\in \R$, because a non-zero real number $r = e^0 r$ has only zero as a possible exponent, and zero has no exponents at all. On the other hand any $f\in \no\setminus \R$ has at least one branch, because it has at least one non-zero exponent and such an exponent is necessarily in $\no^\uparrow \setminus \{0\} \subset \no \setminus \R$. 
A log-atomic number $x\in \no$ has exactly one branch, given by the iterated logarithms $\log_n(x)$ of $x$, which are themselves log-atomic elements. 
 
\begin{prop}\label{prop:max-branch}
Let $B$ be a branch such that $B_{n+1}$ is the greatest exponent of $B_n$ for every sufficiently large $n\in \N$. Then there is $m\in \N$ such that $B_m$ is log-atomic. 
\end{prop}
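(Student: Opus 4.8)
The plan is to analyze what it means for $B_{n+1}$ to be the \emph{greatest} exponent of $B_n$, and relate this to the exponential normal form of $B_n$. Suppose $B_{n+1}$ is the greatest exponent of $B_n$ for all $n \geq N$. Writing $B_n = \sum_{i<\alpha_n} e^{\gamma_i^{(n)}} r_i^{(n)}$ in exponential normal form, the greatest exponent is $\gamma_0^{(n)} = B_{n+1}$, since the exponents $\gamma_i^{(n)} \in \no^\uparrow$ are listed in decreasing order. So for $n \geq N$ we have $B_n = e^{B_{n+1}} r_0^{(n)} + (\text{lower-order terms})$, i.e.\ $e^{B_{n+1}}$ is the \emph{leading monomial} of $B_n$ and hence $B_n \asymp e^{B_{n+1}}$, equivalently $\log(B_n) \sim B_{n+1}$ up to the real coefficient: more precisely $\log(B_n) = \log(r_0^{(n)}) + B_{n+1} + \log(1+\eps)$ for some $\eps \prec 1$, and since $B_{n+1} \in \no^\uparrow$ is purely infinite while $\log(r_0^{(n)}) + \log(1+\eps) \in O(1)$, the purely infinite part of $\log(B_n)$ is exactly $B_{n+1}$.

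The key step is to use the nested truncation rank $\nr$. I would first observe that for each $n \geq N$, the leading term $e^{B_{n+1}} r_0^{(n)}$ of $B_n$ satisfies $B_{n+1} = \gamma_0^{(n)}$, and by \prettyref{cor:sameNR} we have $\nr(\gamma_0^{(n)}) \leq \nr(B_n)$. I then want to argue that in fact $\nr(B_{n+1}) = \nr(B_n)$ eventually, forcing the rank to stabilize; once it stabilizes, \prettyref{cor:sameNR} pins down the shape of $B_n$. Concretely: by \prettyref{prop:nr-decreases}(3)--(4), $\nr(B_n) = \nr(e^{B_{n+1}} r_0^{(n)})$ equals either $\nr(B_{n+1})$ (if $r_0^{(n)} = \pm 1$) or $\nr(B_{n+1})+1$ (otherwise), \emph{provided} $B_n = \pm e^{B_{n+1}}$ has only one term; in the general case $B_n$ has other terms and $\nr(B_n) \geq \nr(e^{B_{n+1}}r_0^{(n)}) \geq \nr(B_{n+1})$, with strict inequality $\nr(B_{n+1}) < \nr(B_n)$ whenever $\alpha_n > 1$ or $r_0^{(n)} \neq \pm 1$. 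Since $(\nr(B_n))_{n\geq N}$ is a sequence of ordinals, it cannot strictly decrease infinitely often, so there is $m \geq N$ with $\nr(B_{m+1}) = \nr(B_m)$; by the dichotomy just described this forces $\alpha_m = 1$ and $r_0^{(m)} = \pm 1$, i.e.\ $B_m = \pm e^{B_{m+1}}$, and moreover $\nr(B_{m+1}) = \nr(B_m)$ so the same reasoning applies at $m+1$, giving $B_{m+1} = \pm e^{B_{m+2}}$, and inductively $B_k = \pm e^{B_{k+1}}$ for all $k \geq m$. Then $B_k \in \no^\uparrow$ for $k \geq m+1$ (being an exponent), and since $B_k = \pm e^{B_{k+1}}$ is a monomial with $B_k > \N$, the sign must be $+$; so $B_m, B_{m+1}, \dots$ is a sequence with $B_{k+1} = \log(B_k)$ and each $B_k$ a monomial. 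That is precisely the condition that $B_m$ is log-atomic, using \prettyref{defn:log-atomic} together with the fact that $\log_n(B_m) = B_{m+n}$ is a monomial for every $n$.

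The cleanest route to the log-atomic conclusion: once we know $B_k = e^{B_{k+1}}$ with $B_k$ a monomial for all $k \geq m$, we have $\log_n(B_m) = B_{m+n}$, which is a monomial for each $n \in \N$ (indeed for $n \geq 1$ it is an exponent, hence in $\no^\uparrow$, and $B_k = e^{B_{k+1}}$ shows it lies in $\M = \exp(\no^\uparrow)$; for $n = 0$ it is $B_m = e^{B_{m+1}} \in \M$). By \prettyref{defn:log-atomic} this says $B_m$ is log-atomic, completing the proof.

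The main obstacle I anticipate is justifying cleanly the rank dichotomy "$\nr(B_{n+1}) < \nr(B_n)$ unless $B_n = \pm e^{B_{n+1}}$." The relevant facts are \prettyref{cor:sameNR} (giving $\nr(\gamma_i) < \nr(f)$ when $i+1 < \alpha$, so if $B_n$ has $\geq 2$ terms then $\nr$ strictly drops) and \prettyref{prop:nr-decreases}(4) (if the leading coefficient $r_0^{(n)} \neq \pm 1$ then $\nr(e^{B_{n+1}} r_0^{(n)}) = \nr(e^{B_{n+1}}) + 1 > \nr(B_{n+1})$, and this term's rank is $\leq \nr(B_n)$). Assembling these to get: either the rank strictly decreases at step $n$, or $B_n = \pm e^{B_{n+1}}$ — and then handling the sign — is the part requiring care, but it is essentially bookkeeping with the cited results rather than a genuinely new difficulty.
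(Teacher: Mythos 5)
Your argument is correct and is essentially the paper's own proof: both rest on the fact that $(\nr(B_n))_n$ is a non-increasing sequence of ordinals, hence eventually constant, and that equality of consecutive ranks combined with $B_{n+1}$ being the \emph{greatest} exponent forces $B_n=\pm e^{B_{n+1}}$, after which the sign is resolved by pure infiniteness of exponents (the paper gets the equality case directly from \prettyref{cor:sameNR}, whereas you re-derive it from \prettyref{prop:nr-decreases}, which is an inessential difference). Two small points to tighten: from ``cannot strictly decrease infinitely often'' you should conclude that the rank sequence is \emph{eventually constant}, so that $\nr(B_{n+1})=\nr(B_n)$ holds for \emph{all} $n\ge m$ --- equality at a single $m$ does not by itself let you ``apply the same reasoning at $m+1$'' --- and the assertion $B_k>\N$ needs the one-line justification that $B_{k-1}$ is purely infinite, so its unique monomial $e^{B_k}$ is infinite and therefore $B_k>0$.
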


\begin{proof} Since there are no infinite decreasing sequences of ordinals, by \prettyref{cor:sameNR}, for all large enough $n\in \N$ we have $\nr(B_{n+1}) = \nr(B_n)$. This equation implies that $B_n  = \sum_{j<i} e^{\gamma_j} r_j \pm e^{B_{n+1}}$ with $B_{n+1}$ being the smallest exponent. On the other hand for $n$ sufficiently large $B_{n+1}$ is also the greatest exponent, so $B_n = \pm e^{B_{n+1}}$. Moreover, for $n>0$, $B_n$ is purely infinite (being an exponent), so if  $B_n=\pm e^{B_{n+1}}$, then $B_{n+1}$ is positive and we can write $B_{n+1} = e^{B_{n+2}}$. It follows that, for all large enough $n\in \N$, $B_n = e^{B_{n+1}}$, and therefore $B_n$ is log-atomic. 
\end{proof} 

\section{Transseries}
\begin{defn} Given a class $\Delta\subseteq \lambda_\no$ of log-atomic numbers, let $\R\langle \langle \Delta \rangle\rangle$ be the class of all $f\in \no$ such that every branch $B$ starting at $B_0 = f$ reaches an element of $\Delta$, i.e. there is $n\in \N$ with $B_n\in \Delta$. 
\end{defn}
%Clearly $\R\langle \langle \Delta \rangle\rangle$ is a field closed under $\exp$ and $\sum$, and using the fact that $\log(\Delta)\subseteq \Delta$ it is not difficult to show that it is also closed under $\log$. 
\begin{prop}[\cite{Berarducci2018}]
$\R\langle \langle \Delta \rangle\rangle$ is the smallest subfield of $\no$ containing $\Delta$ and closed under $\exp$ and infinite sums of summable families. If $\log(\Delta)\subseteq \Delta$, then $\R\langle \langle \Delta \rangle\rangle$ is also closed under $\log$. The containment $\R\langle \langle \Delta \rangle\rangle \subset \no$ is always proper, because in $\no$ there are branches that do not reach any log-atomic number. 
\end{prop}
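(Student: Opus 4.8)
The plan is to establish three separate assertions: (a) $\R\langle\langle\Delta\rangle\rangle$ is a subfield of $\no$ closed under $\exp$ and under summable infinite sums, and it contains $\Delta$; (b) it is the \emph{smallest} such subfield; (c) if $\log(\Delta)\subseteq\Delta$ it is also closed under $\log$; and (d) the inclusion $\R\langle\langle\Delta\rangle\rangle\subsetneq\no$ is always proper. For (a) and (b) the key bookkeeping device is the nested truncation rank $\nr$ from \prettyref{prop:nr-decreases} together with \prettyref{cor:sameNR}: the exponents $\gamma_i$ appearing in the exponential normal form of $f$ all satisfy $\nr(\gamma_i)\le\nr(f)$, and this gives a handle on ``descending through branches'' by a well-founded induction. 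First I would observe that $\R\subseteq \R\langle\langle\Delta\rangle\rangle$ trivially (a real number has no branches at all, so the condition is vacuous), and that $\Delta\subseteq \R\langle\langle\Delta\rangle\rangle$ because a log-atomic number's unique branch is $\log_n(x)$ and already $B_0=x\in\Delta$. Closure under the field operations, under $\exp$, and under summable sums should all be proved simultaneously by checking that a branch starting at (say) $f+g$, $fg$, $e^f$, or $\sum_i f_i$ eventually enters a branch of one of the constituent elements: this is where one uses that the exponents of $f+g$, $fg$, etc.\ are (roughly) among the exponents of $f$ and of $g$, or are dominated in $\nr$-rank, so after finitely many steps any branch ``drops into'' a branch of a building block which by hypothesis reaches $\Delta$. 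The case of $\exp$ is slightly special: a branch of $e^f$ for $f\in\no^\uparrow$ begins $e^f$ then passes to exponents of $e^f$, which relate to $f$ via the exponential normal form of $f$, so again one reduces to a branch of $f$.

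For minimality (b), I would take any subfield $K$ containing $\Delta$, closed under $\exp$ and summable sums, and show $\R\langle\langle\Delta\rangle\rangle\subseteq K$ by induction on $\nr(f)$ for $f\in\R\langle\langle\Delta\rangle\rangle$. If $\nr(f)=0$ then by \prettyref{prop:nr-decreases}(5) $f=\pm\lambda^{\pm1}$ with $\lambda$ log-atomic; since every branch of $f$ reaches $\Delta$ and $f$'s branch is essentially the branch of $\lambda$, one gets $\log_n(\lambda)\in\Delta$ for some $n$, hence $\lambda=\exp_n(\log_n(\lambda))\in K$, hence $f\in K$. For $\nr(f)>0$ write $f=\sum_{i<\alpha}e^{\gamma_i}r_i$ in exponential normal form; each $\gamma_i\in\no^\uparrow$ lies in $\R\langle\langle\Delta\rangle\rangle$ (its branches are tails of branches of $f$) and has $\nr(\gamma_i)<\nr(f)$ when $i+1<\alpha$, or $\nr(\gamma_i)\le\nr(f)$ when $i+1=\alpha$; in the latter borderline case \prettyref{cor:sameNR} forces $f=\sum_{j<i}e^{\gamma_j}r_j\pm e^{\gamma_i}$ and a short separate argument handles the last monomial. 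By induction each $\gamma_i\in K$, so $e^{\gamma_i}\in K$, so $e^{\gamma_i}r_i\in K$, and since $K$ is closed under summable sums, $f=\sum_{i<\alpha}e^{\gamma_i}r_i\in K$. For (c), closure under $\log$ when $\log(\Delta)\subseteq\Delta$: given $f\in\R\langle\langle\Delta\rangle\rangle^{>0}$ write $f=r\m(1+\eps)$; then $\log f=\log r+\log\m+\log(1+\eps)$ where $\log(1+\eps)$ is a summable sum of powers of $\eps$ (already in the field), $\log r\in\R$, and $\log\m=\log e^\gamma=\gamma$ with $\gamma$ an exponent of $f$ hence in $\R\langle\langle\Delta\rangle\rangle$ — the hypothesis $\log(\Delta)\subseteq\Delta$ is what keeps branches from ``falling out the bottom'' of $\Delta$ when one takes logs, so that $\log\m$'s branches still reach $\Delta$.

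For the properness (d), the cleanest approach is to exhibit a surreal with a non-terminating branch: build $f$ so that $f=e^f$ is impossible but $f$ has an infinite branch $(B_n)$ none of whose members is log-atomic — for instance using a ``nested'' surreal whose exponents never stabilize, along the lines of the nested-truncation examples in \cite{Berarducci2018}. Concretely, I would invoke the existence of a surreal $f$ with an infinite descending branch $B_0=f, B_1, B_2,\dots$ in which each $B_n$ has at least two distinct exponents (so $B_n$ is never of the form $\pm\lambda^{\pm1}$, hence never log-atomic by \prettyref{prop:nr-decreases}(5)); such $f$ cannot lie in $\R\langle\langle\Delta\rangle\rangle$ for any $\Delta$ since that particular branch never meets $\Delta$, yet $f\in\no$. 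I expect the main obstacle to be (b), the minimality argument: making the induction on $\nr$ genuinely go through requires carefully matching ``branches of $f$'' with ``branches of the exponents and summands of $f$'', handling the borderline $\nr$-equality case via \prettyref{cor:sameNR}, and checking that the log-atomic base case correctly uses the hypothesis that \emph{every} branch of $f$ reaches $\Delta$ rather than just one; the closure statements in (a) are more routine once the right normal-form descriptions of the exponents of sums, products, and exponentials are in hand.
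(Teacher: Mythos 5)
The paper states this proposition without proof, citing \cite{Berarducci2018}, so there is no in-paper argument to compare against; judged on its own terms, your architecture (closure by tracking how exponents of sums, products, exponentials and summable families decompose into exponents of the constituents; minimality by induction on $\nr$; properness via a nested surreal) is the right one, and parts (a) and (c) are essentially correct as sketched. The first genuine gap is the borderline case of the minimality induction, which you flag but do not resolve, and which is where the real content lies. When $f=\sum_{i<\alpha}e^{\gamma_i}r_i$ and the last exponent satisfies $\nr(\gamma_i)=\nr(f)$, the inductive hypothesis gives nothing, and the same degeneracy can recur inside $\gamma_i$: one may obtain an infinite chain $f=B_0,B_1,B_2,\dots$ in which each $B_{n+1}$ is the smallest exponent of $B_n$, $\nr(B_{n+1})=\nr(B_n)$, and $B_n=(\text{previously handled terms})\pm e^{B_{n+1}}$ by \prettyref{cor:sameNR}. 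The missing step is to observe that this chain is itself a branch of $f$, hence meets $\Delta$ at some finite stage $N$ by the defining property of $\R\langle\langle\Delta\rangle\rangle$; then $B_N\in\Delta\subseteq K$ and one propagates membership in $K$ backwards through $B_n=(\text{sum of terms in }K)\pm e^{B_{n+1}}$ using closure of $K$ under $\exp$, summable sums and the field operations. Note that the hypothesis that \emph{every} branch reaches $\Delta$ is used here, not only in the rank-zero base case. A related small inaccuracy: your claim that every exponent $\gamma_i$ of $f$ lies in $\R\langle\langle\Delta\rangle\rangle$ because ``its branches are tails of branches of $f$'' can fail when a branch of $f$ meets $\Delta$ only at $B_0=f$ itself (i.e.\ $f\in\Delta$); that case must be dispatched separately, trivially, since then $f\in K$ outright.

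The second gap is properness. You propose to ``invoke the existence'' of a surreal with an infinite branch none of whose members is log-atomic, but that existence is exactly the assertion to be proved, and it is not a soft fact: one must show that a nested specification such as $y=\sqrt{\omega}+e^{\sqrt{\log\omega}+e^{\sqrt{\log_2\omega}+e^{\cdots}}}$ (the example appearing later in the paper) actually determines a nonempty class of surreals, which in \cite{Berarducci2018} requires an argument that the associated decreasing chain of nonempty convex classes has nonempty intersection. Granting that, each $B_n$ along the resulting branch has two monomials in its support, hence is not a monomial and a fortiori not log-atomic, so the branch avoids $\lambda_\no\supseteq\Delta$; but as written your part (d) assumes its conclusion rather than proving it.
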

Taking $\Delta= \lambda_{-\N} = \{ \log_n(\omega) \mid n\in \N\}$, we obtain the field of {\bf omega-series}  $\R\langle \langle \lambda_{-\N}\rangle \rangle$, which will also be denoted $\R\langle \langle \omega \rangle\rangle$ as in \cite{Berarducci2018}. Thus $\R\langle \langle \omega \rangle\rangle$ is the smallest subfield of $\no$ containing $\R \cup \{\omega\}$ and closed under $\sum,\cdot, \exp,\log$. The field of omega-series is a proper class, like $\no$ itself, but it has interesting subfields which are sets, among which  an isomorphic copy of the field of transseries. 

\begin{defn} Define $\T= \T^{LE} \subset \T^{EL} \subset \R\langle \langle \lambda_{-\N} \rangle \rangle = \R\langle \langle \omega \rangle \rangle\subset \no$ as follows. 
	%Let $\R\langle \langle \lambda_{-\N} \rangle \rangle$, $\T^{EL}$ and $\T^{LE}$ be the subfields of $\no$ defined as follows. 
	\begin{enumerate}
	%	\item $f \in \R\langle \langle \lambda_{-\N} \rangle \rangle$ if for every branch $B$ starting at $B_0 = f$, there is $n\in \N$ such that $B_n \in \lambda_{-\N}$. 
		%, that is, $B_n$ is a log-atomic number of the form $\log_k(\omega)$ for some $k\in \N$. 
		\item $f \in \T^{EL}$ is there is $n\in \N$ such that for every branch $B$ starting at $B_0 = f$ we have $B_n \in \{\log_k(\omega)\mid k\in \N\}$. 
		\item $f\in \T$ if there are $m,n \in \N$ such that for every branch $B$ starting at $B_0 = f$ we have $B_n\in \{\log_k(\omega) \mid k\leq m\}$. 
	\end{enumerate}
%We shall also write $\T = \T^{LE}$.  
\end{defn}
	Up to isomorphism, $\T$ coincides with the field of logarithmic-exponential series, or LE-series, considered by \citet{Dries1997, DriesMM2001}, with $\omega$ playing the role of the formal variable. The larger field $\T^{EL}\supset \T$ is the field of exponential-logarithmic series, or EL-series, of \citet{Kuhlmann2000}. Both $\T$ and $\T^{EL}$ are fields of transseries in the sense of \cite{Schmeling2001a}, but when we use the word {\bf transseries} without further specification we refer to the  elements of $\T$.  
The following example shows that containment $\T\subset \T^{EL}$ is proper. 
\begin{exa} 
	The monomial $\frac{1}{\prod_{n\in \N} \lambda_{-n}} = \frac{1}{\prod_{n\in \N} \log_n(\omega)}  = \exp(\sum_{n\in \N} -\log_n(\omega))$ is in $\T^{EL}$, but not in $\T$. 
\end{exa}
As exponential fields we have $\R_{\exp} \prec \T \prec \T^{EL} \prec \no$ (where ``$\prec$'' means ``elementary substructure''), however  
	$\T\not\prec T^{EL}$ when considered as differential fields with their natural derivation. Specifically, $\T$ is closed under integration (i.e. the derivative is surjective), while $\T^{EL}$ is not because there is no element in $\T^{EL}$ whose derivative is $\frac{1}{\prod_{n\in \N} \lambda_{-n}}$. Later we shall introduce a surjective strongly additive derivation $\partial: \no\to \no$ with $\partial \omega=1$ which extends the natural derivation on these transserial fields. 
\section{Paths}
In the previous section we have defined the notion of {\em branch}, using the iterated exponents in the exponential normal form. For the purpose of introducing a derivation, it is convenient to define a similar notion which however takes into account not only the exponents, but also the coefficients appearing in the normal form. This gives rise to the notion of {\em path}, defined in this section. 
% derivation, it is more convenient to consider a slightly different tree whose nodes are surreals numbers of the form $\m r$ where $\m$ is a monomial $\neq 1$ and $r \in \R^*$. will be any We say that $\m' r'$ is a child of $\m r$ in the tree, if the Ressayre form of $\m$ is $e^{\sum_{i<\alpha} \m_i r_i}$ and $\m' r'$ is one of the terms $\m_i r_i$.  
\begin{defn} We define:
	\begin{enumerate}
		\item  A {\bf term} is a monomial multiplied by a non-zero real number. We shall write a term in the form $e^\gamma r$, or $re^\gamma$, where $\gamma\in \no^\uparrow$ and $r\in \R^*$. 
		\item We say that $e^\gamma r$ is a term of $f\in \no$ if it is one of the terms $e^{\gamma_i}r_i$ in its normal form $f= \sum_{i<\alpha} e^{\gamma_i}r_i$.
		\item A term $e^\gamma r$ is {\bf non-constant} if it does not belong to $\R$, namely $\gamma\neq 0$.
	\end{enumerate}  
\end{defn}

\begin{defn}\label{defn:path}
	A {\bf path} $P$ is a sequence $(P_n \mid n\in \N)$ of non-constant terms such that, for each $n\in \N$, if $P_n = r_n e^{\gamma_n}$, then  $P_{n+1}$ is one of the non-constant terms of $\gamma_n \in \no^\uparrow$, so we can write $$P_{n} = r_n \exp(x_{n+1} + P_{n+1} + \delta_{n+1})$$ where $\gamma_n = x_{n+1} + P_{n+1} + \delta_{n+1}\in \no^\uparrow$, $\delta_{n+1} \prec P_{n+1}$, and all the monomials of $x_{n+1}$ are $\succ P_{n+1}$. 
\end{defn}	
	
	The following rather surprising property about paths follows from the existence of the nested truncation rank. 
	
	\begin{prop}\label{prop:T4} Let $P$ be a path. In the above notations, for every sufficiently large $n\in \N$ we have $r_n = \pm 1$ and $\delta_{n+1}=0$, so $$P_n=\pm \exp({x_n + P_{n+1}}).$$
\end{prop}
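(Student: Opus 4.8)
The plan is to reduce the statement to the fact, proved in \prettyref{prop:nr-decreases} and \prettyref{cor:sameNR}, that the nested truncation rank $\nr$ decreases strictly along the passage from $\gamma_n$ to a proper truncation, and that it is preserved when passing to the smallest exponent. First I would observe that, given a path $P=(P_n)$, the surreals $\gamma_0 \succ \gamma_1 \succ \dots$ (with $P_n = r_n e^{\gamma_{n-1}}$, say, reading off \prettyref{defn:path}) and more precisely the exponents occurring as $P_{n+1}$ inside $\gamma_n$ form a sequence to which \prettyref{cor:sameNR} applies. The key point is that $P_{n+1}$ is an exponent of $\gamma_n$ — i.e. one of the terms $e^{\gamma_i}r_i$ in the exponential normal form of $\gamma_n$ — only if $\gamma_n$ is \emph{not} equal to $P_{n+1}$ (it has a proper truncation), unless $P_{n+1}$ is the smallest exponent and its coefficient is $\pm 1$, in which case $\nr$ can stay the same.

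The main step is to control $\nr(P_n)$, or rather $\nr(\gamma_n)$, along the path. By \prettyref{cor:sameNR}, if $P_{n+1}$ appears in the exponential normal form of $\gamma_n$ with $i+1 < \alpha$ (i.e.\ it is not the last term), then $\nr(P_{n+1}) < \nr(\gamma_n)$; using \prettyref{prop:nr-decreases}(3) and the relation between $P_n$ and $\gamma_{n-1}$ this gives a strict drop $\nr(P_{n+1}) < \nr(P_n)$. Since $\on$ has no infinite strictly decreasing sequence, this can happen only finitely often: there is $N$ such that for all $n \geq N$, $\nr(P_{n+1}) = \nr(P_n)$. For such $n$, \prettyref{cor:sameNR} forces $\gamma_n = \sum_{j<i} e^{\gamma_j}r_j \pm e^{\gamma_i}$ with $\gamma_i = P_{n+1}$ (possibly after absorbing into $P_{n+1}$ the lower-order data) being the smallest exponent and coefficient $\pm 1$; in the notation of \prettyref{defn:path} this says $\delta_{n+1} = 0$ and $r_{n+1} = \pm 1$. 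Finally, unwinding \prettyref{defn:path}: once $r_{n+1} = \pm 1$ and $\delta_{n+1} = 0$ we get $P_n = \pm\exp(x_n + P_{n+1})$ as claimed. One should also remark that since $P_n$ for $n \geq 1$ is an exponent, it is purely infinite, which is what lets us conclude that the $\pm e^{P_{n+1}}$ term really is of the stated exponential shape and that $P_{n+1}$ itself is positive.

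The main obstacle I anticipate is bookkeeping: matching the indexing of \prettyref{defn:path} (where $\gamma_n$ is the exponent inside $P_n$, so $P_{n+1}$ is a term of $\gamma_n$) against the exponential-normal-form indexing used in \prettyref{cor:sameNR}, and making sure the "strict drop unless last term with coefficient $\pm 1$" dichotomy is applied to the right object — namely to $\gamma_n$ and the position of $P_{n+1}$ in its normal form, rather than carelessly to $P_n$. In particular one must check that the passage from $\nr(\gamma_n)$ to $\nr(P_n)$ (which differ by at most $1$, and are equal precisely when the coefficient $r_n$ of $P_n$ is $\pm 1$, by \prettyref{prop:nr-decreases}(3)--(4)) does not disturb the well-foundedness argument; but since any genuine increase in $\nr$ by $1$ costs a strict decrease somewhere, and the sequence $(\nr(P_n))$ is ordinal-valued, it must stabilize, and stabilization is exactly the condition we need. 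Everything else is a direct unwinding of the definitions.
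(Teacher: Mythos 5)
Your argument is correct and takes essentially the same route as the paper's own (very terse) proof: the nested truncation rank is non-increasing along the path, hence the ordinal-valued sequence stabilizes, and at stabilization \prettyref{cor:sameNR} (equivalently \prettyref{prop:nr-decreases}(2)--(4)) forces the ``smallest exponent with coefficient $\pm 1$'' shape, i.e.\ $\delta_{n+1}=0$ and the real coefficient equal to $\pm 1$. The only blemishes are harmless index shifts (your $\gamma_{n-1}$ versus the paper's $\gamma_n$, and concluding $r_{n+1}=\pm 1$ where the statement reads $r_n=\pm 1$), which make no difference in a ``for all sufficiently large $n$'' assertion.
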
	
		\begin{proof}
		For every $n\in \N$, $\nr(P_{n+1}) \leq \nr(P_n)$, hence for every sufficiently large $n\in \N$ we have $\nr(P_{n+1}) = \nr(P_n)$.  For these values of $n\in \N$ we must have $P_n=\pm \exp({x_{n+1} + P_{n+1}})$. 
	\end{proof}
	
\prettyref{prop:T4} can be rephrased by saying that $\no$ satisfies {\bf axiom T4} in \citep{Schmeling2001a} and therefore $\no$ is a field of (generalized) transseries in the sense of that paper. 

\begin{rem}
The field of transseries $\T$ satisfies an even stronger property: every branch reaches a log-atomic number. This stronger axiom is called 
ELT4 in \cite{Kuhlmann2015} and it is also satisfied by the field of omega-series $\R\langle \langle \omega \rangle \rangle$, which is in fact the largest subfield of $\no$ satisfying the stronger property. The surreal field $\no$ contains {\em nested transseries} such as $y = \sqrt{\omega}+e^{\sqrt{\log(\omega)}+e^{\sqrt{\log_2(\omega)}+e^{\ldots}}}$ which violate ELT4, while still respecting T4. In this example, the element $y=y(\omega)$ is a solution of the functional equation
$y(\omega) = \sqrt{\omega}+e^{y(\log(\omega))}$. The perturbed equation $y(\omega) = \sqrt{\omega}+e^{y(\log \omega)} + \log \omega$ cannot have solutions in $\no$ since it would violate T4 (the example is taken from \cite{Aschenbrenner2017g}). 
\end{rem}
	
\begin{defn} \label{defn:pathof} Let $P$ be a path. 
	\begin{enumerate}
		\item We say that $P$ is a {\bf path of $f\in \no$}, if $P_0$ is a non-constant term of $f$. 
		Let 
		$$\mathcal{P}(f)= \{P \mid P \text{ is a path of } f\}.$$
		\item Given $f\in \no\setminus \R$, the {\bf dominant path} of $f$ is the unique path $P\in \mathcal{P}(f)$ such that $P_0 = r_0e^{\gamma_0}$ is the leading term of $f-f^\circ$ (where $f^{\circ}$ is the constant term of $f$, see \prettyref{defn:purely-infinite}) and for each $n$, if $P_n = r_n e^{\gamma_n}$, then $P_{n+1}$ is the leading term of $\gamma_n$. For the dominant path we can thus write $$P_n = r_n \exp(P_{n+1} + \delta_{n+1})$$ with $\delta_{n+1}\prec P_{n+1}$. 
	\end{enumerate}	
\end{defn} 

\begin{rem} We have: 
	\begin{enumerate}
		\item Given $f\in \no$, we have $\mathcal{P}(f) = \emptyset$ if and only if $f\in \R$;  
		\item Any log-atomic element $\lambda_x$ has exact one path $P$, given by $P_n = \log_n \lambda_x$. 
		%	\item %Given $\Delta$ with $\log \Delta \subseteq \Delta \subseteq \lambda_\no$ and $f\in \no$, 
		%	Given $f\in \no$, we have $f\in \R\langle \langle \Delta \rangle \rangle$ if and only if for every $P\in \mathcal{P}(f)$ there is $n\in \N$ such that $P(n) \in \Delta$. We say in this case that $P$ {\em enters} $\Delta$. 
	\end{enumerate}
\end{rem}

%
%\begin{defn}
%	Recalling the notion of path in \prettyref{defn:path}, we have that if $\m=e^{\sum_{i<\alpha}\m_i r_i}$ is a monomial, $r\in \R^*$ and $P$ is a path with $P_n = \m r$, then $P_{n+1} = \m_i r_i$ for some $i<\alpha$. If $i=0$ for every $n$, we call $P$ a {\bf dominant path} and we observe that every $f\in \no\setminus \R$ has a unique dominant path $P \in \mathcal{P}(f)$. 
%	
%	%
%	% In this situation, if $\alpha$ is a successor ordinal and $i+1=\alpha$, we call $P_{n+1}$ the {\bf last child} of $P_n$, while if $i=0$ then $P_{n+1}$ is the {\bf first child}. In general the {\bf children} of $\m r = e^{\sum_{i<\alpha}\m_i r_i}r$ are the terms $\m_i r_i$. 
%	%
%	%We say that a path $P$ is {\bf dominant} if for every  $n\in \N$, $P_{n+1}$ is the first child of $P_n$, and we say that $P$ is {\bf eventually dominant} if this happens for every sufficiently large $n\in \N$. 
%\end{defn}

%By induction on the nested truncation rank one can prove the following result. 
\begin{prop}[\protect{\cite[Lemma 6.23]{Berarducci2018}}]\label{prop:dominant}
	Let $P$ be a dominant path. Then there is $n\in \N$ such that $P_n$ is log-atomic. 
	% (and therefore $P_m$ is log-atomic for all $m\geq n$).  
\end{prop}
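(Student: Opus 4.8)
The plan is to combine \prettyref{prop:max-branch} with the structure of dominant paths, using the nested truncation rank $\nr$ as the well-founded measure that drives the argument. The key observation is that a dominant path $P$ of $f$ carries along with it the dominant branch obtained by forgetting the real coefficients: if $P_n = r_n\exp(P_{n+1}+\delta_{n+1})$ with $\delta_{n+1}\prec P_{n+1}$, then writing $P_n = r_n e^{\gamma_n}$ we have that $\gamma_n$ is an exponent of $P_{n-1}$ (as a surreal number, after dividing out the coefficient), and moreover $P_{n+1}$ is \emph{the leading term} of $\gamma_n$. The subtlety is that an exponent of $P_n$ in the branch sense is a $\gamma$ with $P_n = \pm e^\gamma r$, so I first want to pass from the sequence of terms $(P_n)$ to the sequence of monomials/exponents and check that this is genuinely a branch in the sense of the earlier section.

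First I would set $B_n$ to be the exponent appearing in $P_n$, i.e. if $P_n = r_n e^{\gamma_n}$ put $B_n := \gamma_n \in \no^\uparrow$ for $n \geq 0$; more precisely, since $P_{n+1}$ is a term $r_{n+1}e^{\gamma_{n+1}}$ of $\gamma_n$, the surreal $\gamma_n$ has $\gamma_{n+1}$ among the exponents in its exponential normal form, so $(\gamma_n)_{n\in\N}$ is a branch in the sense of \prettyref{defn:path}'s preceding section (with $B_{n+1} = \gamma_{n+1}$ an exponent of $B_n = \gamma_n$). The crucial extra fact coming from \emph{dominant} paths is that $P_{n+1}$ is the leading term of $\gamma_n$, which means $\gamma_{n+1}$ is the \textbf{greatest} exponent of $\gamma_n$ (the exponent of the dominant monomial), since exponents in the normal form are listed in decreasing order and the leading term has the largest monomial hence the largest exponent. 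Therefore the branch $(\gamma_n)_{n}$ satisfies the hypothesis of \prettyref{prop:max-branch}: $\gamma_{n+1}$ is the greatest exponent of $\gamma_n$ for all $n$ (in fact for all sufficiently large $n$ would suffice).

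Then \prettyref{prop:max-branch} gives $m\in\N$ such that $\gamma_m$ is log-atomic. It remains to convert this back to a statement about $P_m$ rather than $\gamma_m$. By \prettyref{prop:T4}, for all sufficiently large $n$ we have $r_n = \pm 1$ and $\delta_{n+1}=0$, so $P_n = \pm\exp(x_n + P_{n+1})$; combined with the dominant-path normalisation $P_n = r_n\exp(P_{n+1}+\delta_{n+1})$ (no $x_{n+1}$ term, since $P_{n+1}$ is already the leading term of $\gamma_n$), we get $\gamma_n = P_{n+1}$ for large $n$, i.e. $\gamma_n$ is \emph{itself} a term with coefficient $\pm 1$, so $\gamma_n = \pm e^{\gamma_{n+1}}$. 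Pushing $m$ forward if necessary so that this holds at stage $m$, the log-atomic number $\gamma_m$ differs from $P_m$ only by the real coefficient $r_m$; but a log-atomic number times a nonzero real is still an element whose iterated logarithms are eventually monomials — more directly, I can instead conclude that for large $n$, $P_n = \pm\exp(P_{n+1})$ and $P_{n+1}$ is log-atomic once $\gamma_m$ is, and then $P_{m+1} = \gamma_m$ (or $-\gamma_m$, impossible since log-atomic numbers are positive and this is an exponent, hence $P_{m+1}=\gamma_m$) is log-atomic. So the conclusion holds with that index.

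The main obstacle I anticipate is the bookkeeping in the last paragraph: carefully matching the two normal forms for $P_n$ (the general path form with the $x_{n+1}$ and $\delta_{n+1}$ correction terms, versus the dominant-path form) to deduce that $\gamma_n$ eventually equals $P_{n+1}$ exactly, and handling the harmless sign/coefficient discrepancy so that "log-atomic" transfers from $\gamma_m$ to an actual $P_k$. One should be slightly careful that \prettyref{prop:max-branch} is stated for branches built from exponents, and verify that the dominant path really does yield such a branch with the "greatest exponent" property for large $n$ — this uses that the leading term of a purely infinite surreal has the largest exponent among all terms, which follows because monomials are ordered by their exponents via the increasing omega-map. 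Everything else is a direct citation of \prettyref{prop:max-branch} and \prettyref{prop:T4}.
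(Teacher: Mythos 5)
Your argument is correct, and at bottom it runs on the same engine as the paper's proof, but it is organised around a different key lemma, so a comparison is worth making. The paper applies \prettyref{prop:T4} directly to the dominant path: matching $P_n=\pm \exp(x_n+P_{n+1})$ against the dominant-path form $P_n=r_n\exp(P_{n+1}+\delta_{n+1})$ gives $P_n=\pm\exp(P_{n+1})$ for all large $n$, and the sign analysis (each $P_{n+1}$ is purely infinite, so the next exponent is positive) then yields $P_n=\exp(P_{n+1})$ and hence log-atomicity, with no further input. You instead first pass to the associated branch of exponents $(\gamma_n)$, check the ``greatest exponent'' hypothesis (correctly, via monotonicity of the omega-map), invoke \prettyref{prop:max-branch} to obtain a log-atomic $\gamma_m$, and only then use the T4-plus-dominance matching to identify $\gamma_n$ with $P_{n+1}$ for large $n$. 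Since \prettyref{prop:max-branch} and \prettyref{prop:T4} both rest on the stabilisation of the nested truncation rank, your route is a valid but slightly redundant repackaging: once you have $\gamma_n=P_{n+1}$ and $P_n=\pm\exp(P_{n+1})$ for large $n$, the conclusion already follows without \prettyref{prop:max-branch}. What the detour buys is that the sign bookkeeping is done once and for all inside \prettyref{prop:max-branch} rather than repeated here. One parenthetical claim of yours is false as stated: a log-atomic $\lambda$ times a real $r\neq\pm 1$ does \emph{not} have all its iterated logarithms eventually monomials, since $\log(\lambda r)=\log\lambda+\log r$ is a monomial plus a nonzero constant; but you immediately discard that remark in favour of the correct identification $P_{m+1}=\gamma_m$, so the proof is unaffected.
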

\begin{proof}
	By \prettyref{prop:T4} if $n\in \N$ is sufficiently large, $P_n=\pm \exp({x_n + P_{n+1}}).$ On the other hand, since $P$ is dominant, we also have $P_n = r_n \exp(P_{n+1} + \delta_{n+1}).$ Matching the two equations, we obtain 
	$P_n = \pm \exp(P_{n+1})$ for every sufficiently large $n$. This implies that $P_{n+1}$ is purely infinite, and therefore its exponent $P_{n+2}$ is positive. 
	It follows that for $n$ sufficiently large, $P_n = \exp(P_{n+1}),$ so $P_n$ is log-atomic. 
\end{proof}

\section{Surreal derivations}\label{surreal derivation}
A derivation on a field $K$ is a linear map $\partial: K \to K$ satisfying Leibniz's rule $\partial (xy) = x\partial y + y \partial x$. 

\begin{defn}\label{defn:surreal-derivation}
	A {\bf surreal derivation} is a derivation $\partial: \no\to \no$ satisfying: 
		\begin{enumerate}
		\item[(SD1)] if $x>\N$, then $\partial x > 0$;
		\item[(SD2)] $\ker(\partial) = \R$;
		\item[(SD3)] $\partial e^f = e^f \partial f$;
		\item[(SD4)]  if $(f_i)_{i<\alpha}$ is summable, then so is $(\partial f_i)_{i\in I}$ and $\partial (\sum_{i\in I} f_i) = \sum_{i\in I} \partial f_i.$
	\end{enumerate}
\end{defn}
The motivation for (SD1) comes from the theory of Hardy fields discussed in \prettyref{sec:Hardy}. 

To define a surreal derivation, a crucial problem is to decide its restriction to the log-atomic numbers. If we stipulate that $\partial \omega=1$, then necessarily $\partial \log_n(\omega) = \frac{1}{\prod_{m<n} \log_m(\omega)}$. This is a reasonable choice for the derivative of log-atomic numbers of the form $\log_n(\omega)$ because it corresponds to the elementary rule of calculus
$\frac{d\log_n(\x)}{d\x} = \frac{1}{\prod_{m<n} \log_m(\x)}$, with $\omega$ in the role of the formal variable $\x$. In the general case we have the following result.

\begin{thm}[\protect{\cite[Thm. 6.30]{Berarducci2018}}]  \label{thm:derivation} 
	%The map $\partial: \no\to \no$ in \prettyref{defn:simplest} 
	There is a (possibly not unique) surjective surreal derivation $\partial: \no\to \no$ whose restriction to the log-atomic numbers is given by
	\begin{equation*}  \leqno{\hspace{4pt} \text{\em (SD5)}} \quad 
	\partial \lambda_{-x} = \frac{\prod\limits_{n\in \N} \lambda_{-x-n}}{\prod\limits_{\beta}\lambda_{-\beta}} 
	\end{equation*}
	where $x\in \no$ and $\beta$ ranges over the ordinals $\leq x+m$ for some $m\in \N$.
%	Taking $x=0$, we have $\lambda_0= \omega$ and we obtain $$\partial \omega = 1.$$  
	Note that when $x$ is an ordinal, the condition on $\beta$ becomes $\beta < x + \omega$. As special cases of (SD5) we have
	\begin{enumerate}
		\item $\partial \omega = 1$; 
		\item $\partial \log_n(\omega) = \frac{1}{\prod_{m<n} \log_m(\omega)}$;
		\item $\partial \lambda_{-\alpha} = \frac{1}{\prod_{\beta<\alpha} \lambda_{-\beta}}\;$ for every ordinal $\alpha$\\ (where $\beta$ ranges over the ordinals $<\alpha$). 
	\end{enumerate}
	
	% \prettyref{defn:simplest} we construct a derivation as above
	%Note that (SD5) implies $\partial \omega = 1$.
	%Here $\lambda_{-x}$ is the log-atomic element with index $-x\in \no$ (\prettyref{defn:lambda}).a derivation as above. 
\end{thm}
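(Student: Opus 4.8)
The plan is to construct $\partial$ by transfinite recursion on the nested truncation rank $\nr$ introduced earlier, using the exponential normal form as the skeleton of the induction. First I would fix the values on log-atomic numbers by formula (SD5): one must check that the infinite products $\prod_{n\in\N}\lambda_{-x-n}$ and $\prod_{\beta\le x+m}\lambda_{-\beta}$ actually exist (i.e. the relevant families of logarithms are summable), using \prettyref{prop:prodM} and the density/cofinality properties of the $\lambda$-chain; this is where the hypothesis that $\beta$ ranges only up to $x+m$ for \emph{some} $m\in\N$ is essential, since $\prod_{\beta}\lambda_{-\beta}$ over \emph{all} ordinals $\beta$ would not make sense. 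Then I would verify that (SD5) is ``coherent'' along the log-atomic chain, namely that applying $\log$ once and differentiating is compatible with the chain rule forced by (SD3): $\partial\lambda_{-x-1}=\partial\log(\lambda_{-x})=\frac{\partial\lambda_{-x}}{\lambda_{-x}}$, which reduces to a telescoping identity between the two displayed products. This yields the special cases (1)--(3) as written.

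Next I would extend $\partial$ to all of $\no$ by forcing the algebraic laws. Given $f$ in exponential normal form $f=\sum_{i<\alpha}e^{\gamma_i}r_i$, the conditions (SD2), (SD3), (SD4) and linearity dictate
\[
\partial f \;=\; \sum_{i<\alpha} r_i\, e^{\gamma_i}\,\partial\gamma_i,
\]
so $\partial f$ is determined by the values $\partial\gamma_i$ on the exponents $\gamma_i\in\no^\uparrow$. By \prettyref{cor:sameNR} we have $\nr(\gamma_i)\le \nr(f)$, with strict inequality unless $i+1=\alpha$ and $r_i=\pm1$; in that boundary case $f=\sum_{j<i}e^{\gamma_j}r_j\pm e^{\gamma_i}$ and one recurses on the ``nested'' exponent $\gamma_i$ along the path, invoking \prettyref{prop:T4} and \prettyref{prop:dominant} to guarantee that every path eventually stabilises to $\pm\exp(x_n+P_{n+1})$ and (for dominant paths) reaches a log-atomic number — this is what makes the recursion well-founded and anchors it at the log-atomic base case where (SD5) applies. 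I would then need to check that the formula for $\partial f$ does not depend on incidental choices and that the right-hand side is a legitimate summable family, again via Neumann's lemma and \prettyref{prop:prodM}.

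Having defined $\partial$, the remaining work is verification: (a) $\R$-linearity and Leibniz's rule, reduced via the normal form to the multiplicativity $\partial(e^{\gamma}e^{\delta})=e^{\gamma+\delta}(\partial\gamma+\partial\delta)$ which follows from $e^\gamma e^\delta=e^{\gamma+\delta}$ and additivity on exponents; (b) strong additivity (SD4), which is built into the construction but must be confirmed to propagate through the recursion; (c) (SD3), immediate from the defining formula; (d) (SD2), $\ker\partial=\R$, which requires showing $\partial f\neq0$ whenever $f\notin\R$ — here one picks the dominant path of $f$, tracks the leading term of $\partial f$ down that path, and uses positivity of the derivatives of log-atomic numbers (which in turn gives (SD1): if $x>\N$ its dominant term has a positive exponent and the leading coefficient of $\partial x$ comes out positive); and (e) surjectivity, constructing a preimage of a given $g$ by integrating term-by-term along exponential normal forms and using the density of levels to integrate the log-atomic pieces, controlling summability at each stage. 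I expect the main obstacle to be (d)/(e) together with the well-definedness of the recursion: proving that differentiating along every path terminates correctly at a log-atomic number and that the leading-term bookkeeping never produces unexpected cancellation is the delicate heart of the argument, and it is exactly where \prettyref{prop:T4}, \prettyref{prop:dominant} and \prettyref{cor:sameNR} carry the weight. Since the theorem only claims existence (``possibly not unique''), I would not attempt to pin down the behaviour on nested transseries beyond what the path analysis forces.
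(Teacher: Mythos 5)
Your overall architecture (fix $\partial$ on log-atomic numbers by (SD5), propagate through exponential normal forms via $\partial f=\sum_i r_i e^{\gamma_i}\partial\gamma_i$, then verify the axioms) matches the paper's in spirit, but there is a genuine gap at the heart of your construction: the recursion you describe is not well-founded. You claim that \prettyref{prop:T4}, \prettyref{prop:dominant} and \prettyref{cor:sameNR} ``guarantee that every path\ldots reaches a log-atomic number''; they do not. \prettyref{prop:dominant} only applies to the \emph{dominant} path, and the paper explicitly exhibits branches in $\no$ that never reach a log-atomic number (the nested transseries $y=\sqrt{\omega}+e^{\sqrt{\log\omega}+e^{\cdots}}$, and the remark that $\R\langle\langle\Delta\rangle\rangle\subset\no$ is always proper for this reason). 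Along such a path the nested truncation rank does not keep decreasing --- by \prettyref{prop:T4} it eventually \emph{stabilises}, with $P_n=\pm\exp(x_{n+1}+P_{n+1})$ and $x_{n+1}$ possibly nonzero forever --- so a recursion on $\nr$ anchored only at log-atomic base cases never terminates on these inputs. This is exactly the obstruction that forces the paper to abandon a term-by-term recursion in favour of a \emph{global} definition: $\partial f=\sum_{P\in\mathcal{P}(f)}\partial_D(P)$, where the path-derivative $\partial_D(P)=\bigl(\prod_{m<n}P_m\bigr)D(P_n)$ is defined when some $P_n$ is log-atomic and is \emph{declared to be zero} when the path never reaches a log-atomic number (\prettyref{defn:pathD}). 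That convention, together with the summability of the family of path-derivatives (cited from the original paper) and the independence of $\partial_D(P)$ from the choice of $n$, is the missing idea; without it your construction simply does not assign a value to $\partial f$ for nested transseries.

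Two smaller points. For (SD2) your outline is close to the paper's: one shows the dominant path contributes a nonzero term that dominates and hence cannot be cancelled by the other paths --- but this argument only makes sense once the sum-over-paths definition is in place. For surjectivity, the paper does not prove it either; it defers entirely to \prettyref{prop:simplest} (i.e.\ to the original article), and moreover surjectivity is specific to the \emph{simplest} prederivation --- the paper notes that the variant $\partial\lambda_{-x}=\prod_{n\in\N}\lambda_{-x-n}$ yields a non-surjective derivation --- so ``integrating term-by-term'' cannot work for an arbitrary choice satisfying your local compatibility checks; the particular denominator in (SD5) is what makes $1$ have an integral.
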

%In the rest of the section we justify (SD5) and we define a specific derivation as stated in the theorem. 
% surjective surreal derivation satisfying (SD5). 

\begin{rem}\label{rem:partialomega} 
The  present form of (SD5) appears in \citep{Aschenbrenner2019} and is a notational simplification of the equivalent formula in \cite[Def. 6.7]{Berarducci2018}.
To prove that (SD5) implies (1)--(3) we reason as follows. When
 $\alpha$ is an ordinal, the fraction in (SD5) simplifies as 
 $\partial \lambda_{-\alpha} = \frac{\prod\limits_{n\in \N} \lambda_{-\alpha-n}}{\prod\limits_{\beta< \alpha+\omega}\lambda_{-\beta}} = \frac{1}{\prod_{\beta<\alpha} \lambda_{-\beta}}
 $. 
    Recalling that $\lambda_{-n} = \log_n(\omega)$ for $n\in \N$, we obtain, 
	$\partial \log_n(\omega) = \frac{1}{\prod_{m<n} \log_m(\omega)}$,  so in particular $\partial \omega = 1$. 
	
	To justify (SD5) in the general case we argue as follows. It can be shown that (SD1)--(SD4) imply that
$\log \partial x - \log \partial y \prec x-y \preceq \max \{ x, y\}$ for all $x,y\in \no^{>N}$ such that $x-y >\N$ \cite[Prop. 6.5]{Berarducci2018}. Accordingly, we define a {\bf prederivation} as a map $D:\lambda_\no\to \no^{>0}$ satisfying
$\partial e^x = e^x D(x) $ and $\log D(x) - \log D(y) \prec \max \{x,y\}$ for $x>y$ in $\lambda_\no$ (recall that the elements of $\lambda_\no$ and their differences are positive infinite). 
Given two prederivations, we say that one is simpler than the other if it takes a simpler value on any element of minimal simplicity where they differ. It turns out that the {\bf simplest prederivation} is given by  (SD5) \cite[Thm. 9.6]{Berarducci2018}. 
\end{rem}

 %Note that the prederivation given by (SD5) takes values in $\M$. In \cite[Thm. 6.32]{Berarducci2018} it is shown that any prederivation which takes values in $\M\R^*$ can be extended to a surreal derivation on $\no$. 
 
To prove \prettyref{thm:derivation} we need to show how to extend a prederivation $D:\lambda_\no\to \no$ to a surreal derivation. To this aim we observe that {\em every} surreal derivation must satisfy
\begin{equation*}\label{eq:recursion}
\partial (\sum_{i<\alpha} e^{\gamma_i}r_i) = \sum_{i<\alpha} e^{\gamma_i}r_i \partial \gamma_i,
\end{equation*}
so the derivative of $f= \sum_{i<\alpha} e^{\gamma_i}r_i$ is a sum of contributions (possibly with cancellations) given by the {\em terms} $e^{\gamma_i}r_i$ multiplied by the derivative of the corresponding {\em exponents} $\gamma_i$. An iteration of the process involves the terms appearing in the exponents of $f$, then the exponents of the exponents, and so on. 
%each $\partial \gamma_i$ is a sum 
%of contributions coming from the terms of the exponents of $\gamma_i$, so $\partial f$ is a sum of contributions coming from the {\em exponents of the exponents} of $f$. 
This process never ends, unless we start with a real number. Eventually, we are led to write $\partial f$ as a sum of contributions $\partial_D(P)\in \no$, one for each path $P\in \mathcal{P}(f)$, as in the following definition (adapted to the surreals from a similar construction in \cite{Schmeling2001a}).  

\begin{defn} \label{defn:pathD} Let $D:\lambda_\no\to \no$ be a prederivation and let $P=(P_n\mid \n\in \N)$ be a path. We define its {\bf path-derivative} $\partial_D(P)\in \no$ as follows. 
	If one of the terms $P_n$ of $P$ is a log-atomic number, we put 
	\begin{equation*}\label{eq:DP}
	\partial_D(P) = \left(\prod_{m<n} P_m\right) D(P_n)
	\end{equation*} 
	and we observe that the above definition does not depend on the choice of $n$ (because if $P_n$ is log-atomic $D(P_{n}) = P_{n}D(P_{n+1})$). 
	On the other hand, if $P$ does not reach a log-atomic number, we define its path-derivative $\partial_D(P)$ to be zero. Under the simplifying assumption that $D$ takes values in $\M \R^*$ \footnote{The assumption can probably be relaxed, but we do not insist on this point since we are mainly interested in the simplest prederivation, whose values are in $\M$.}, it can be proved that the family $(\partial_D(P)\mid P\in \mathcal{P}(f))$ is summable \cite[Prop. 6.20, Thm. 6.32]{Berarducci2018}, so we can define 
	%(We shall later see that the paths that do reach a log-atomic number are dense in the set of all paths.) 
	\begin{equation*}
	\partial f = \sum_{P\in \mathcal{P}(f)} \partial_D(P)
	\end{equation*}
	 and we call $\partial$ the {\bf derivation induced by the prederivation $D$}. 
\end{defn}

\begin{prop}[\protect{\cite[Thm. 6.32]{Berarducci2018}}] Given a prederivation $D:\lambda_\no \to \M \R^*$, the map $\partial:\no\to \no$ defined as above is a surreal derivation.
\end{prop}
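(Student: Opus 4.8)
The plan is to verify directly that the map $\partial$ defined by $\partial f = \sum_{P \in \mathcal P(f)} \partial_D(P)$ satisfies linearity, Leibniz's rule, and the four axioms (SD1)--(SD4), taking as given the summability statement cited in \prettyref{defn:pathD} (i.e. that $(\partial_D(P))_{P\in \mathcal P(f)}$ is summable for each $f$, when $D$ takes values in $\M\R^*$). The overall strategy is to reduce each property to a manipulation of the index set $\mathcal P(f)$: paths of $f$ are parametrized by choices of a non-constant term at each level of the iterated-exponent tree, so operations on $f$ correspond to transparent reindexings of $\mathcal P(f)$.

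First I would treat strong linearity, i.e. (SD4) together with ordinary $\R$-linearity. If $(f_i)_{i\in I}$ is summable with sum $f$, then every non-constant term of $f$ is a non-constant term of some $f_i$, and conversely; more precisely $\mathcal P(f) = \bigsqcup_{i} \mathcal P(f_i)$ once we discard the paths whose zeroth term gets cancelled in the sum (such paths contribute a term that is not a term of $f$, but one checks these occur in cancelling pairs, or are simply absent). Hence $\partial f = \sum_i \sum_{P\in\mathcal P(f_i)} \partial_D(P) = \sum_i \partial f_i$, modulo checking the double family is summable — which follows from the single-$f$ summability applied to a common refinement, or from the hypothesis on $D$. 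Scalar multiplication $f \mapsto rf$ just rescales $P_0$ and leaves $\partial_D$ equivariant. Then (SD3): the terms of $e^f$ (for $f$ purely infinite) are in bijection with the paths of $e^f$, each of which is $P_0 = e^f$ followed by a path of $f$; so $\mathcal P(e^f) \cong \mathcal P(f)$ via prepending, and $\partial_D$ of the prepended path equals $e^f \cdot \partial_D(P)$, giving $\partial e^f = e^f \partial f$. For a general $e^\gamma r$ one uses the already-established linearity.

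Next I would handle Leibniz's rule $\partial(fg) = f\partial g + g\partial f$. By strong linearity it suffices to check it on terms $f = e^\gamma r$, $g = e^\delta s$, where $fg = e^{\gamma+\delta}rs$; here $\gamma + \delta$ is again purely infinite and its non-constant terms are obtained by adding the supports of $\gamma$ and $\delta$ (with possible cancellations), so $\mathcal P(fg)$ splits, up to the cancellation issue, into the paths of $fg$ whose continuation lies in $\gamma$ and those whose continuation lies in $\delta$; summing the path-derivatives over these two families yields exactly $g\,\partial f + f\,\partial g$ after one uses $\partial e^\gamma = e^\gamma \partial\gamma$ inductively. Then (SD2): $\ker\partial = \R$ follows because a nonzero non-real $f$ has a dominant path $P$, which by \prettyref{prop:dominant} reaches a log-atomic number, so $\partial_D(P) = (\prod_{m<n}P_m)D(P_n)$ is a nonzero monomial times a nonzero real, and — crucially — it is the leading (largest) term among all $\partial_D(P')$ for $P'\in\mathcal P(f)$, so no cancellation can kill it; hence $\partial f \neq 0$. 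Finally (SD1): for $x > \N$, the dominant path again witnesses that the leading term of $\partial x$ is $(\prod_{m<n} P_m)D(P_n)$ with all $P_m > 0$ (they are infinite terms on the dominant path) and $D$ taking positive values, so $\partial x > 0$.

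The main obstacle, and the step I expect to absorb the real work, is controlling cancellations — both the "horizontal" cancellations that occur when summing terms of $f+g$ or $\gamma+\delta$ (so that $\mathcal P$ does not split cleanly as a disjoint union but only up to paths whose leading term vanishes), and the "vertical" issue of showing that in $\partial f = \sum_{P}\partial_D(P)$ the contribution of the dominant path is genuinely dominant (hence survives), which is what makes (SD1) and (SD2) work. This requires an asymptotic comparison: for a path $P$ of $f$ one shows $\partial_D(P) \preceq \partial_D(P^{\mathrm{dom}})$ with equality only for $P = P^{\mathrm{dom}}$, using that $\prod_{m<n}P_m$ detects the "depth and size" of the path and that $D$ is order-compatible via the prederivation inequality $\log D(x) - \log D(y) \prec \max\{x,y\}$. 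I would either cite \cite[Prop. 6.20, Thm. 6.32]{Berarducci2018} for these estimates or reprove the key comparison lemma; everything else is then bookkeeping on the tree of iterated exponents.
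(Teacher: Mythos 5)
Your proposal takes essentially the same route as the paper's own proof, which is itself only a sketch: both rest on the formula $\partial f=\sum_{P\in\mathcal P(f)}\partial_D(P)$, defer the summability estimates to \cite[Prop.~6.20, Thm.~6.32]{Berarducci2018}, and obtain (SD1) and $\ker\partial=\R$ from the fact that the dominant path of a non-real $f$ reaches a log-atomic number (\prettyref{prop:dominant}) and that its path-derivative dominates, hence cannot be cancelled. The additional bookkeeping you outline for strong linearity, (SD3) and Leibniz's rule fills in exactly the verifications the paper explicitly omits, so there is no divergence of method.
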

\begin{proof}[Sketch of Proof] 
	We omit the verification that $\partial$ is a strongly additive derivation compatible with $\exp$ and we only make a comment on the proof that $\ker \partial = \R$. Clearly $\R\subseteq \ker \partial$ because for $f\in \R$ we have $\mathcal{P}(f) = \emptyset$.  On the other hand if $f\nin \R$ and $P\in \mathcal{P}(f)$ is the dominant path, then $P$ reaches a log-atomic number by \prettyref{prop:dominant}, so $\partial_D(P)\neq 0$. Moreover, in the formula $\partial f = \sum_{P\in \mathcal{P}(f)} \partial_D(P)$, the contribution of the dominant path  dominates the contributions of the other paths in $\mathcal{P}(f)$, so it cannot be cancelled 
\end{proof}

To complete the proof of \prettyref{thm:derivation} one needs to establish the following proposition, for the proof of which we refer to the original paper. 

\begin{prop}[\protect{\cite[Prop. 7.6]{Berarducci2018}}]\label{prop:simplest}
	The derivation $\partial:\no\to \no$ induced by the simplest pre-derivation is surjective, hence it is as prescribed in \prettyref{thm:derivation}. 
\end{prop}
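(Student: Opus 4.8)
The plan is to show surjectivity of $\partial$ by solving the equation $\partial y = f$ for an arbitrary $f \in \no$, using induction on the nested truncation rank $\nr(f)$ and an asymptotic analysis of path-derivatives. First I would reduce to the case where $f$ is a single term $e^\gamma r$ (by strong additivity, (SD4), it suffices to antidifferentiate each term of the normal form of $f$, provided the resulting family of antiderivatives is summable — this summability is itself something to check, but it follows from controlling the birthdays/ranks of the antiderivatives). In fact, since $\partial(\no)$ is closed under multiplication by $\R$ and one expects the antiderivative of a monomial times a real to be a real multiple of the antiderivative of the monomial, the real heart is: given a monomial $\m = e^\gamma$, find $y$ with $\partial y = \m$.

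Next I would use the structure theory of $\partial$ via paths. For a candidate antiderivative, the dominant behaviour of $\partial y$ is governed by the dominant path of $y$, which by \prettyref{prop:dominant} reaches a log-atomic number $\lambda$, and there the value of $\partial$ is pinned down by (SD5). So the strategy is to first guess the leading term of $y$: writing $\m = e^\gamma$, one wants $y \asymp e^\gamma / \partial(\text{something})$, more precisely one solves the leading-order equation using the explicit formula (SD5) for $\partial\lambda_{-x}$ and the chain rule $\partial e^f = e^f \partial f$ to produce a term $t$ with $\partial t \sim \m$ (asymptotic equivalence). Then $f_1 := \m - \partial t \prec \m$, and one wants to recurse on $f_1$. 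The key point making the recursion terminate is that $f_1$ has strictly smaller nested truncation rank than $f$, or at least that one can organise the process as a summable transfinite sum $y = \sum t_\xi$ with $\partial(\sum t_\xi) = \sum \partial t_\xi = f$; this is exactly where \prettyref{cor:sameNR} and \prettyref{prop:nr-decreases} get used to bound ranks of the successive exponents and terms, and where the structure of the dominant path (Propositions~\ref{prop:T4},~\ref{prop:dominant}) guarantees that the ``leading term'' computation can always be carried out.

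The main obstacle will be the bookkeeping needed to guarantee summability of the infinite (transfinite) family of correction terms $t_\xi$ and to ensure the process actually exhausts $f$ rather than leaving a nonzero residue. Concretely: one must show that at limit stages the partial sums $\sum_{\xi<\eta} t_\xi$ converge (in the sense of summable families in $\no = \R((\M))_\on$) to an element whose derivative, computed via (SD4), is the corresponding partial sum of $f$; and one must rule out the ``constant of integration'' type obstruction — but since $\ker\partial = \R$ (SD2) and we only need surjectivity, any particular solution suffices, so the residue argument is really about showing the supports shrink. Here I would lean on the fact, visible in \prettyref{defn:pathD}, that $\partial f$ is assembled path-by-path with the dominant path's contribution dominating; inverting this, each correction step strictly decreases the dominant monomial of the residue in a well-founded way controlled by birthdays (via \prettyref{cor:simplicity-index}: $\birth(x_i) \le \birth(f)$ for exponents in normal form), so the process must terminate in at most $\birth(f)$-many steps. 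Assembling these estimates carefully is the technical core, and for the full details one would, as the paper says, refer to \cite{Berarducci2018}.
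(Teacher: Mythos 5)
The paper itself does not prove this proposition: it states it with an explicit pointer to the original article (``for the proof of which we refer to the original paper''), so the only thing to measure your sketch against is the cited argument. Your overall plan --- produce an asymptotic integral of the leading term, subtract, and iterate transfinitely on the residue while keeping the family of corrections summable --- is the right shape and is essentially the strategy of that proof. But as written there are two genuine gaps, and they sit exactly at the two hard points. The first is the existence of asymptotic integrals: you need, for every monomial $\m=e^{\gamma}$, some $t$ with $\partial t\sim \m$, and you present this as a routine ``leading-order computation with (SD5) and the chain rule''. It is not routine: it is a fixed-point problem (roughly $\gamma'\approx\gamma-\log\partial\gamma'$), and in $H$-fields asymptotic integration can fail at a ``gap''. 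The paper's own remark after \prettyref{prop:simplest} is the litmus test: replacing (SD5) by $\partial\lambda_{-x}=\prod_{n\in\N}\lambda_{-x-n}$ gives a derivation satisfying (SD1)--(SD4) for which $1$ has \emph{no} integral. Your sketch nowhere uses the specific shape of (SD5) (in particular the denominator $\prod_{\beta}\lambda_{-\beta}$, which is precisely what removes the gap), so the same sketch would ``prove'' surjectivity of that non-surjective derivation; whatever correct argument one gives must break for the modified derivation, and yours does not.

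The second gap is termination. The claim that each correction step decreases something well-founded ``controlled by birthdays'', so that the process stops in at most $\birth(f)$ steps, is unsupported and, as stated, false: what decreases is the leading monomial of the residue in the ordering $\prec$, which is very far from well-founded (there are strictly $\prec$-decreasing sequences of arbitrary ordinal length), and an antiderivative --- hence the successive residues --- can have birthday far exceeding $\birth(f)$, so \prettyref{cor:simplicity-index} gives no bound. Nor does the nested truncation rank of the residue decrease at each step; \prettyref{cor:sameNR} and \prettyref{prop:nr-decreases} control exponents inside a fixed normal form, not the result of subtracting $\partial t$ from $f$. Making the transfinite iteration actually terminate (equivalently, making the transfinite family of corrections summable with the right sum) requires a genuine additional argument --- a fixed-point/summability theorem or a cardinality/cofinality argument carried out inside a set-sized subfield --- and this is precisely the part you defer to the reference. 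That deferral is honest, but it means the proposal is missing the content at both places where the work is.
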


In the rest of the paper, we let $\partial:\no\to \no$ be the surreal derivation induced by the simplest pre-derivation. If $\partial f = g$, 
we say that $f$ is an {\bf integral} of $g$. Since $\ker \partial = \R$, the integral is defined up to an additive real constant. 

\begin{rem}
If we modify (SD5) putting $\partial \lambda_{-x} = \prod_{n\in \N} \lambda_{-x-n}$, we obtain a different surreal derivation which however is not surjective as there would be no integral of $1$ (see \cite[Def. 6.6]{Berarducci2018}). Despite the fact that the formula $\partial \lambda_{-x} = \prod_{n\in \N} \lambda_{-x-n}$ looks intuitively simpler than (SD5), the pre-derivation given by (SD5) is in fact simpler according to our definitions. 
\end{rem}

\section{Hardy fields}\label{sec:Hardy}
In this section we discuss the connections between surreal numbers and Hardy fields. 

\begin{defn}[See \citep{Bourbaki1976}]
	A {\bf Hardy field} is a set of germs at $+\infty$ of real valued differentiable functions on positive half-lines of $\R$ that form a field under the usual addition and multiplication of germs and it is closed under differentiation of germs.
\end{defn}	
	There is a large literature on Hardy fields, see for instance the pioneering work of Hardy \cite{Hardy1910} and Rosenlicht  \cite{Rosenlicht1983,Rosenlicht1983a,Rosenlicht1987}. 
	If the germ of $f$ belongs to a Hardy field, then $f$ is eventually always positive, or negative, or zero (we cannot have $\sin(\x)$), so each Hardy field is an ordered field: its positive elements are the germs of the eventually positive functions. 
	\begin{exa}
	The field $\R(\x)$ of rational functions is a Hardy field (with the order induced by $\x>\R$). The set of germs at $+\infty$ of functions definable in an o-minimal expansion of the reals is a Hardy field. 
%	In particular the set of germs definable in $\R_{\exp}$ is a Hardy field by the o-minimality result of \citet{Wilkie1996}. A larger Hardy field is the field of germs of functions definable in 
%	
%	The Gamma function restricted to the positive axis is not definable in $\R_{\exp}$, however van den Dries and Speissegger \cite{Dries2000} showed that it is definable in an o-minimal expansion of $\R_{\exp}$, so its germ belongs to a Hardy field.  There are Hardy fields with functions growing faster than any finite composition of $\exp$ \citep{Boshernitzan1986}, but it is not known whether there is such an example closed under composition. In particular it is not known whether there is an o-minimal structure on $\R$ with with functions growing faster than any finite composition of $\exp$. 	
	\end{exa}
	
The notion of H-fields is an algebraic counterpart of the notion of Hardy field. 
	
	\begin{defn}[\cite{Aschenbrenner2002,Aschenbrenner2005a}]
	An {\bf H-field} is an ordered field $K$ with a derivation $\partial: K \to K$ satisfying
	\begin{enumerate}
		\item If $f>c$ for every $c\in \ker(\partial)$, then $\partial f >0$; 
		\item
		If $|f|$ is bounded by some constant $c\in \ker \partial$, then $f$ is equal to a constant plus an element smaller in absolute value than any positive constant). 
	\end{enumerate}
	We say that an H-field has {\bf small derivation} if whenever $|f|$ is smaller than any constant, so is $|\partial f|$. 
\end{defn}
Note that if an H-field $K \supseteq \R$ has a derivation $\partial$ with $\ker \partial = \R$ and there is an element $f\in K$ with $\partial f = 1$, then the derivation is small. If $\ker \partial = \R$, then (1) takes the form $f>\N\implies \partial f >0$, which coincides with property (SD1) in the definition of surreal derivation (\prettyref{defn:surreal-derivation}).  

\begin{exa}
Any Hardy field is a H-field. Other examples include: the field of Laurent series, the field of Puiseux series, the field $\T$ of transseries, the field $\no$ of surreal numbers (with the derivation of \prettyref{prop:simplest}).   
\end{exa}
We have already mentioned the fact that every linear order (whose domain is a set) embeds in $\no$. Similarly, it can be proved that many ordered algebraic structures embed nicely in $\no$ \cite{Ehrlich2001a,Ehrlich2020}. In a similar spirit, we have the following result. 

\begin{thm}[\cite{Aschenbrenner2019}]\label{thm:embeddingH}
	Every Hardy field extending $\R$ (and more generally every H-field with a small derivation and constant field $\R$) admits an embedding into $(\no, \partial)$ as a differential field. 
\end{thm}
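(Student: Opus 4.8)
The plan is to build the embedding $\iota\colon K\hookrightarrow \no$ by transfinite recursion, extending it one element at a time along a well-ordering of $K$ (or more structurally, along a maximal chain of intermediate differential subfields), and using at each step the universal "simplicity" principle: every nonempty convex subclass of $\no$ has a simplest element, so whenever we have specified the cut that the image of a new element $a$ must fill, there is a canonical surreal filling it. The content of the theorem is that these canonical choices are simultaneously compatible with the field operations, the ordering, \emph{and} the derivation. Since $\no$ is real closed, contains $\R$, is closed under $\exp$ and $\log$, has a surjective derivation $\partial$ with $\ker\partial=\R$ (Theorem~\ref{thm:derivation}), and satisfies (SD1)–(SD4), it is a sufficiently rich "differential universal domain" for this kind of back-and-forth-free recursion; the small-derivation and constant-field-$\R$ hypotheses on $K$ are exactly what make $K$ compatible with the structure of $(\no,\partial)$ near the constants.

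First I would reduce the general $H$-field case to a pure differential-algebra statement, discarding the analytic origin of Hardy fields entirely: a Hardy field extending $\R$ is in particular an $H$-field with constant field $\R$, and the remark after the definition of $H$-field shows it automatically has small derivation. So it suffices to embed an arbitrary $H$-field $K\supseteq\R$ with small derivation and $\ker\partial_K=\R$. Next I would set up the recursion: enumerate $K$ as $(a_\xi)_{\xi<\mu}$, let $K_\xi$ be the differential subfield generated by $\R$ and $\{a_\eta:\eta<\xi\}$, assume inductively that we have an embedding $\iota_\xi\colon K_\xi\hookrightarrow\no$ of ordered differential fields, and extend it to $K_{\xi+1}=K_\xi\langle a\rangle$ where $a=a_\xi$. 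There are two cases. If $a$ is differentially algebraic over $K_\xi$, then the isomorphism type of $K_\xi\langle a\rangle$ as an ordered differential field is pinned down by the cut of $a$ over $K_\xi$ together with its differential-algebraic relations; because $\no$ is real closed and $\partial$ is surjective with the right value group behaviour, one shows the corresponding cut in $\iota_\xi(K_\xi)\subseteq\no$ is realized, and one takes the \emph{simplest} realization $b\in\no$, defining $\iota_{\xi+1}(a)=b$. If $a$ is differentially transcendental over $K_\xi$, one must find $b\in\no$ whose cut over $\iota_\xi(K_\xi)$ matches that of $a$ and such that $\partial_K a$ maps to $\partial b$; here surjectivity of $\partial$ and the density of $\no$ (every set-sized cut is filled) give enough room, and again one takes the simplest such $b$ — with the additional subtlety that $b$ itself, $\partial b$, $\partial^2 b,\dots$ and all the rational differential functions of $b$ over $\iota_\xi(K_\xi)$ must land in the right cuts, which one arranges by a secondary recursion on $\partial^{(n)}b$.

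The key compatibility lemmas I would isolate and prove are: (a) that the map so constructed preserves order — this follows because at each step we fill exactly the prescribed cut; (b) that it is a differential ring homomorphism — Leibniz and linearity are inherited term-by-term, and $\partial\iota(a)=\iota(\partial a)$ is imposed by construction in the transcendental case and forced by the relations in the algebraic case; and (c) the crucial asymptotic point, that the valuation-theoretic/asymptotic behaviour of $\partial$ on $K$ is consistent with that of $\partial$ on $\no$ — this is where one uses (SD1) (which is the $H$-field axiom $f>\N\Rightarrow\partial f>0$) and the inequality $\log\partial x-\log\partial y\prec\max\{x,y\}$ noted in Remark~\ref{rem:partialomega}, so that the image differential field actually satisfies the same asymptotic axioms inside $\no$ and no contradiction arises at limit stages. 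At limit ordinals $\lambda$ one simply sets $\iota_\lambda=\bigcup_{\xi<\lambda}\iota_\xi$, which is well-defined and still an embedding since the conditions are all of finite/local character.

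**The hard part** will be the differentially-transcendental step and, relatedly, controlling \emph{asymptotic} cuts rather than merely order cuts: one needs that when $a$ has a prescribed asymptotic class over $K_\xi$ (say $a\succ$ everything in $K_\xi$, or $a$ lies in a specified Archimedean-class gap), there is a surreal $b$ realizing that asymptotic cut \emph{and} with $\partial b$ realizing the asymptotic cut dictated by the $H$-field axioms, compatibly for all the iterated derivatives at once. Surjectivity of $\partial$ on $\no$ and the fact that $\no$ has no "asymptotic integration gap" (a consequence of the $\lambda$-map machinery and Proposition~\ref{prop:simplest}) are what rescue this, but verifying that the simplicity-minimal choice never violates an asymptotic constraint imposed later is the delicate bookkeeping. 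I expect this to be the technical heart of the argument; everything else is a routine, if lengthy, transfinite induction using the universal properties of $\no$ established earlier in the paper.
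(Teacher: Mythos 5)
The paper does not actually prove this theorem; it is quoted from Aschenbrenner--van den Dries--van der Hoeven, \emph{The surreal numbers as a universal $H$-field}, so there is no internal proof to compare against. Measured against the argument in that reference, your proposal has a genuine gap. The real proof is not a bare-hands transfinite recursion guided by the simplicity principle: it goes through the model-theoretic machinery of \emph{Asymptotic Differential Algebra and Model Theory of Transseries}. One first proves that $(\no,\partial)$ is a Liouville closed, $\omega$-free, \emph{newtonian} $H$-field with small derivation and constant field $\R$ (each of these is a substantial theorem about the Berarducci--Mantova derivation), so that $\no$ is a model of the model companion of the theory of $H$-fields with small derivation; one then combines the embedding and extension theory of that book (existence of $\omega$-free newtonian Liouville closed extensions, their uniqueness over the base, control of the constant field) with a saturation property of $\no$ (every set-sized type or pc-sequence over a set-sized subfield is realized) to obtain universality. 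Your outline implicitly assumes all of this can be replaced by local, stage-by-stage choices.

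Concretely, the step that fails is the claim that at each stage one can take the \emph{simplest} surreal realizing the order cut of $a$ over $\iota_\xi(K_\xi)$ and that the differential-algebraic relations will then be forced. The simplest element of a convex class is determined by the order alone and has no reason to satisfy a prescribed differential equation: to extend an embedding across an element with $\partial a = a$ and $a$ greater than everything in $K_\xi$, you must produce $b\in\no$ in the correct cut with $\partial b = b$, and the simplest element of that cut is certainly not such a $b$. Likewise, in the differentially transcendental case one must realize a complete differential-algebraic type --- the positions of all $\partial^{(n)}b$ and of all rational differential functions of $b$ over $\iota_\xi(K_\xi)$ simultaneously --- which is a compactness/saturation statement, not something the simplicity principle delivers, and your proposed ``secondary recursion on $\partial^{(n)}b$'' does not address the consistency of these infinitely many constraints. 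Your final paragraph correctly locates where the difficulty lies, but the tools you invoke (surjectivity of $\partial$, absence of asymptotic integration gaps, simplicity-minimal choices) are not sufficient; what is actually needed is the newtonianity and $\omega$-freeness of $\no$ together with the $H$-field extension theory, none of which is established, or even stated, in your sketch.
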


\citet*{Aschenbrenner2017} proved that every system of algebraic differential equations (in several variables) over the field $\T$ of transseries which has a solution in a larger H-field, has already a solution in $\T$. Moreover there is an algorithm which decides when this happens. Indeed the authors of \cite{Aschenbrenner2017} showed that the complete (first--order) theory of $\T$ as a differential field is recursively axiomatisable, hence decidable. Combining this with \prettyref{thm:embeddingH}, we deduce that $\no$ is closed under a large class of differential equations compatible with the theory of H-fields. For instance, if $P(y)$ is a polynomial over $\no$, there is some $f\in \no$ such that $\partial f = P(f)$ (it suffices to observe that the corresponding result holds in $\T$ and the coefficients of $P(y)$ lie in some small H-field contained in $\no$, where ``small'' means that the domain is a set). 

\section{Composition}
	Given two formal power series $f(\x)$ and $g(\x)$ such that $g(\x)$ has no constant term, we can define the composition $f(g(\x))$ simply by substituting all occurrences of $\x$ in $f(\x)$ with $g(\x)$ and expanding the resulting expression. In a similar way, we can define the composition $f\circ g$ of two transseries $f,g\in \T$ provided $g>\N$. More generally, given two omega-series $f,g\in \R\langle \langle \omega \rangle\rangle$ with $g>\N$ we can define $f\circ g$ replacing each occurrence of $\omega$ in $f$ with $g$ and putting the resulting expression in normal form, where the hypothesis $g>\N$ is needed to ensure the summability of the development. We can even allow the second argument of the composition to be a surreal number, as in the following result. 
	
	\begin{thm}[\cite{Berarducci2019}]\label{thm:composition} There is a unique map  
	$\circ: \R\langle \langle \omega \rangle \rangle \times \no^{>\N} \to \no$ satisfying   
		\begin{enumerate}
			\item $r\circ x = r$ if $r\in \R$;
			\item $\omega\circ x = x$;
			\item $(\sum_{i\in I}f_i) \circ x = \sum_{i\in I} (f_i\circ x)$;
			\item $\exp(f)\circ x = \exp(f\circ x)$.
		\end{enumerate}	
	\end{thm}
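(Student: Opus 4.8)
The plan is to construct the composition map $\circ:\R\langle\langle\omega\rangle\rangle\times\no^{>\N}\to\no$ by transfinite recursion and to prove uniqueness by showing that the four axioms determine the value $f\circ x$ completely once we know the structure of $f$. The key observation is that every $f\in\R\langle\langle\omega\rangle\rangle$ is built from $\R\cup\{\omega\}$ using summable infinite sums, field operations, and $\exp$ (and $\log$, but $\log$ is definable from $\exp$ on $\no^{>0}$), so axioms (1)--(4) together with the fact that $\circ$ should be a ring homomorphism in the first argument leave no freedom. Concretely, fix $x\in\no^{>\N}$. For uniqueness I would argue by induction on the nested truncation rank $\nr(f)$ (\prettyref{prop:nr-decreases}), or alternatively on the complexity of a fixed generation of $f$ inside $\R\langle\langle\omega\rangle\rangle$ as described in the proposition characterising that field: a real number is sent to itself by (1); $\omega$ is sent to $x$ by (2); an infinite sum is handled by (3); and a term $e^\gamma r$ forces $(e^\gamma r)\circ x = r\cdot\exp(\gamma\circ x)$ by (4) and $\R$-linearity, with $\gamma\in\no^\uparrow$ of strictly smaller rank. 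Since $\R\langle\langle\omega\rangle\rangle$ is exactly the smallest subfield containing $\R\cup\{\omega\}$ closed under $\exp$, $\log$ and summable sums, this pins down $f\circ x$ uniquely; in particular $\log(f)\circ x = \log(f\circ x)$ follows for $f>0$ because $\log(f)\in\R\langle\langle\omega\rangle\rangle$ and $\exp$ of both sides agree.

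For existence, I would define $f\circ x$ by the same recursion and then verify that it is well-defined and lands in $\no$. The substitution ``replace every $\omega$ by $x$'' is made precise via the exponential normal form: writing $f=\sum_{i<\alpha}e^{\gamma_i}r_i$ with $\gamma_i\in\no^\uparrow$ decreasing, set $f\circ x=\sum_{i<\alpha}\exp(\gamma_i\circ x)\,r_i$, where each $\gamma_i\circ x$ is defined recursively (the exponents live in $\R\langle\langle\omega\rangle\rangle\cap\no^\uparrow$ and have strictly smaller rank when $f\notin\R$, with the base case being the log-atomic numbers $\log_n(\omega)$, sent to $\log_n(x)$). The two things to check are: (a) the family $(\exp(\gamma_i\circ x)\,r_i)_{i<\alpha}$ is summable in $\no$, and (b) the resulting map respects sums, products and $\exp$. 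For (a) the hypothesis $x>\N$ is essential: it guarantees that $\circ$ preserves the asymptotic relations $\prec,\asymp$ on the relevant monomials, so a reverse-well-ordered support in $f$ is sent to a reverse-well-ordered support in $f\circ x$; this is where one uses that $x$ is infinite (so $\exp(\gamma\circ x)$ is a genuine monomial of the expected Archimedean size). For (b), additivity is immediate from the definition on normal forms and strong linearity; compatibility with $\exp$ is built in by (4); multiplicativity then follows since $\log$ is recovered from $\exp$, or can be checked directly on monomials using $e^{\gamma}e^{\delta}=e^{\gamma+\delta}$.

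The main obstacle I expect is the summability argument (a), i.e.\ showing that the naive term-by-term substitution actually produces a well-formed element of $\no=\R((\M))_\on$ rather than an ill-founded formal sum. This requires controlling how composition with a fixed infinite $x$ acts on the tree of iterated exponents: one must show that if $(\m_i)_{i}$ is the (reverse-well-ordered) support of $f$ then $(\m_i\circ x)_i$ is still reverse-well-ordered, which amounts to proving $\m\prec\m'\implies \m\circ x\prec\m'\circ x$ for monomials of $\R\langle\langle\omega\rangle\rangle$, and that no strictly increasing $\N$-sequence of monomials is created. This is proved by a nested induction, using \prettyref{prop:T4} to reduce to the log-atomic ``spine'' of each path and the fact that $x>\N$ makes $\log_n(x)$ infinite for all $n$, so the levels are preserved. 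Once monotonicity on monomials is in hand, Neumann's lemma (\prettyref{lem:Neumann}) and the closure of $\R\langle\langle\omega\rangle\rangle$ under summable sums take care of the rest, and the four axioms are then verified mechanically on the normal form.
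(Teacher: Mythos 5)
Your proposal follows essentially the same route as the paper, which itself only sketches this result and defers the substantive work to \cite{Berarducci2019}: uniqueness because axioms (1)--(4) force the value on each term by recursion through the iterated exponents, grounded in $\log_n(\omega)\circ x=\log_n(x)$, and existence by term-by-term substitution in exponential normal form, with summability of the resulting family as the one genuinely hard point. One correction to your primary uniqueness argument: induction on the nested truncation rank does not work as stated, since by \prettyref{prop:nr-decreases} one has $\nr(\pm e^{\gamma})=\nr(\gamma)$, so the rank need not drop when passing from a term to its exponent. The correct well-founding is the one you offer as an alternative and the one the paper uses: by the very definition of $\R\langle\langle\omega\rangle\rangle$, every branch of iterated exponents reaches some $\log_n(\omega)$ after finitely many steps, and the value there is forced by (2) together with the derived compatibility of $\circ$ with $\log$. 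Like the paper, you leave the inductive summability verification for existence as a sketch; that is where the real content of \cite{Berarducci2019} lies.
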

	Note that (3) also implies $\log(h)\circ x = \log(h\circ x)$, so in particular $\log_n(\omega) \circ x = \log_n(x)$. Since every branch of an element of $\R\langle \langle \omega \rangle \rangle$ reaches a log-atomic number of the form $\log_n(\omega)$, points (1)--(4) determine the value of the composition $f\circ x$ for $f\in \R\langle \langle \omega \rangle \rangle$ and $x\in \no^{>\N}$, thus proving the uniqueness part of the theorem. The existence part is more complicated, as one needs to verify inductively that if $f_i\circ x$ has been defined for every $i\in I$ and the family $(f_i)_{i\in I}$ is summable, then also $(f_i\circ x)_{i\in I}$ is summable.
	
	 The associativity property $(f\circ g)\circ x = f\circ (g\circ x)$ holds whenever it makes sense, i.e. when both $f$ and $g$ are omega-series (while $x\in \no^{>\N}$). It follows that we can interpret an omega-series $f$ (and in particular a transseries), as  
	a surreal function $\no^{>\N} \to \no$ sending $x$ to $f\circ x$. We also write $f(x)$ instead of $f\circ x$.
	
	 Since $\no$ is an ordered field, it makes sense to ask whether the function $x\mapsto f\circ x$ is differentiable. Point (1) of the following proposition shows that the answer is positive and the derivative of this function coincides with the the function $x\mapsto (\partial f) \circ x$. 
	 \begin{prop}[\cite{Berarducci2019}] \label{prop:compatible}For $f,g\in \R\langle\langle \omega \rangle\rangle$ with $g>\N$, $x,y\in \no^{>\N}$ and $\eps\in \no$, we have: 
	 	\begin{enumerate}
	 		\item $\partial f \circ x = \lim\limits_{\eps\to 0} \frac{f\circ (x+\eps)-f\circ x}{\eps}$;
	 		\item $\partial (f \circ g) = (\partial f \circ g)\partial g$;
	 		\item $f\circ (b+\eps) = \sum_{n\in \N} \frac{1}{n!}(\partial^{(n)} f \circ b)\eps^n$, provided $\eps$ is sufficiently small;
	 		\item if $\partial f = 0$, then $f\circ x = f$;
	 		\item if $\partial f>0$ and $x<y$, then $f\circ x < f \circ y$. 
	 	\end{enumerate}
%	 	
%	 For any omega-series $f$ and every $b\in \no^{>\N}$ we have
%	$$\partial f \circ b = \lim\limits_{\eps\to 0} \frac{f\circ (x+\eps)-f\circ b}{\eps} \hspace{2em}$$
%	where the variable $\eps$ ranges in $\no$.  
%	We also have a chain rule $$\partial (f \circ g) = (\partial f \circ g)\partial g,$$ provided $f$ and $g$ are omega-series. Moreover there is a Taylor formula
%	 $$f\circ (b+\eps) = \sum_{n\in \N} \frac{1}{n!}(\partial^{(n)} f \circ b)\eps^n$$  
%where $f$ is an omega-series, $b\in \no^{>\N}$ and $\eps\in \no$ is any sufficiently small surreal number. 
	\end{prop}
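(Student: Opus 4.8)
The plan is to establish the five items in the order $(4)$, $(3)$, $(1)$, $(2)$, $(5)$, using throughout the characterisation of $\circ$ by the four clauses of \prettyref{thm:composition} — from which one also reads off that $\circ$ respects $\exp$, $\log$, and hence products in its first argument (cf.\ the remark following \prettyref{thm:composition}) — together with the fact that, $f$ being an $\omega$-series, every branch of $f$ reaches one of the log-atomic numbers $\log_n(\omega)$ after finitely many steps. Pruning the tree of iterated exponents of $f$ at the first such occurrence on each branch yields a well-founded tree, hence an ordinal ``branch rank'' on $\R\langle\langle\omega\rangle\rangle$ along which I would induct. A preliminary observation, needed even to state $(1)$ and $(3)$, is that $\R\langle\langle\omega\rangle\rangle$ is closed under $\partial$: by the path-derivative formula (\prettyref{defn:pathD}), $\partial f$ is a summable sum of products of iterated-exponent terms of $f$ — which lie in $\R\langle\langle\omega\rangle\rangle$, that field being closed under truncation and $\log$ — times values $1/\prod_{m<n}\log_m(\omega) \in \R\langle\langle\omega\rangle\rangle$, so each $\partial^{(k)}f$ is again an $\omega$-series and $\partial^{(k)}f\circ x$ makes sense for $x\in\no^{>\N}$. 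Item $(4)$ is then immediate: $\partial f = 0$ forces $f\in\ker\partial = \R$ by (SD2), and clause $(1)$ of \prettyref{thm:composition} gives $f\circ x = f$.

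For $(3)$ I would induct on the branch rank of $f$. The base cases are $f\in\R$ (trivial) and $f = \log_k(\omega)$, where $f\circ(b+\eps) = \log_k(b+\eps)$ and the claimed identity is the Taylor expansion of $\log_k$ about $b$, obtained from the logarithmic series of \prettyref{defn:log} by a direct computation — legitimate since $b\in\no^{>\N}$ and $\eps$ is small enough that $\eps/b\prec 1$, so $\log(b+\eps) = \log b + \log(1+\eps/b)$, and one then iterates through $\log_2,\log_3,\dots$. For the inductive step, additivity of $\circ$ over sums together with $\R$-linearity of $\partial$ reduces matters to a single term, and since $\circ$ fixes $\R$ and respects $\exp$ this reduces further to $f = e^\gamma$. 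Then $f\circ(b+\eps) = \exp(\gamma\circ(b+\eps))$; the inductive hypothesis for $\gamma$ gives that $\gamma\circ(b+\eps) - \gamma\circ b = \sum_{n\ge1}\tfrac1{n!}(\partial^{(n)}\gamma\circ b)\eps^n$ is a small surreal, so by \prettyref{thm:Gonshor}$(1)$ and $(3)$ one has $\exp(\gamma\circ(b+\eps)) = \exp(\gamma\circ b)\sum_{j}\tfrac1{j!}\left(\sum_{n\ge1}\tfrac1{n!}(\partial^{(n)}\gamma\circ b)\eps^n\right)^{j}$; collecting powers of $\eps$ and using the Fa\`{a} di Bruno identity $\partial^{(m)}(e^\gamma) = e^\gamma\,B_m(\partial\gamma,\dots,\partial^{(m)}\gamma)$ (with $B_m$ the complete Bell polynomial, an identity in $(\no,\partial)$ proved by Leibniz) together with the ring-homomorphism property of $\circ$ yields $\sum_m \tfrac1{m!}(\partial^{(m)}f\circ b)\eps^m$. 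Part of this step is checking that all the families involved are summable, a Neumann's-lemma argument (\prettyref{lem:Neumann}) once the support of $\eps$ is controlled. Item $(1)$ now follows by dividing: $\frac{f\circ(x+\eps)-f\circ x}{\eps} = (\partial f\circ x) + \sum_{n\ge2}\tfrac1{n!}(\partial^{(n)}f\circ x)\eps^{n-1}$, whose tail is $\preceq\eps$ and hence tends to $0$, so the limit exists and equals $\partial f\circ x$.

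The chain rule $(2)$ is again proved by induction on the branch rank of $f$, reducing as above to $f = e^\gamma$. In the base cases, $f\in\R$ gives $0 = 0$; for $f = \log_k(\omega)$ one extracts from (SD3) the identity $\partial\log(u) = \partial u / u$, iterates it to get $\partial(\log_k g) = \partial g / \prod_{m<k}\log_m(g)$, and recognises the right-hand side as $(\partial\log_k(\omega)\circ g)\,\partial g$ since $\partial\log_k(\omega) = 1/\prod_{m<k}\log_m(\omega)$ (\prettyref{thm:derivation}) and $\circ$ respects $\log$. For $f = e^\gamma$: by (SD3) and the inductive hypothesis for $\gamma$, $\partial(f\circ g) = \partial\exp(\gamma\circ g) = \exp(\gamma\circ g)\,\partial(\gamma\circ g) = \exp(\gamma\circ g)(\partial\gamma\circ g)(\partial g)$, while $\partial f\circ g = (e^\gamma\partial\gamma)\circ g = (e^\gamma\circ g)(\partial\gamma\circ g) = \exp(\gamma\circ g)(\partial\gamma\circ g)$ because $\circ$ is a ring homomorphism respecting $\exp$; multiplying by $\partial g$ matches the two sides. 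I expect the main obstacle to be the legitimacy of this induction: since $\nr(e^\gamma) = \nr(\gamma)$ by \prettyref{prop:nr-decreases}$(3)$, the nested-truncation rank does \emph{not} decrease when one passes from $e^\gamma$ to $\gamma$, so the recursion must instead be run down the branch-tree of $f$, ``every branch reaches some $\log_n(\omega)$'' must be turned into an honest ordinal rank, and the summability of all the path-indexed sums arising along the way must be checked.

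Finally, for $(5)$ one analyses the leading term of $f\circ y - f\circ x$. Writing $f = f^\circ + \sum_{i<\alpha}e^{\gamma_i}r_i$ with $(\gamma_i)_{i<\alpha}$ decreasing in $\no^\uparrow$, we have $f\circ y - f\circ x = \sum_i r_i(e^{\gamma_i\circ y} - e^{\gamma_i\circ x})$, and the goal is to show its sign equals the sign of the leading coefficient of $\partial f$, which is $>0$ exactly when $\partial f > 0$. By the same branch induction one shows that the $i=0$ contribution dominates all others: when $y-x$ is small relative to $x$ this follows from $(1)$ and $(3)$, while when $y-x$ is large one uses the inequality $\exp(t)\ge 1+t$, the monotonicity of $\exp$ and $\log$ on $\no$ (\prettyref{thm:Gonshor}), and the inductive hypothesis applied to $\gamma_0$; since $\partial f > 0$ pins the sign of that dominant contribution, $(5)$ follows. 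After the well-foundedness set-up I expect the hardest point here to be bridging the infinitesimal regime and the large-increment regime: there is no mean value theorem available in $\no$, and a non-infinitesimal increment cannot be subdivided into infinitesimal pieces.
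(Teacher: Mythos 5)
First, a caveat about the ground truth: this survey states \prettyref{prop:compatible} without proof, importing it from \cite{Berarducci2019}, where it is one of the principal results; so there is no in-paper argument to measure you against, and your proposal has to be judged on its own. For items (1)--(4) your architecture is sound and is, in outline, the natural one: closure of $\R\langle\langle \omega \rangle\rangle$ under $\partial$, a well-founded induction on the tree of iterated exponents pruned at the first occurrence of some $\log_n(\omega)$, reduction to the cases $f\in\R$, $f=\log_k(\omega)$, $f=e^\gamma$, and Taylor/Fa\`a di Bruno bookkeeping, with the summability checks honestly flagged rather than performed. Two repairs are needed even there. The reduction to a single term uses that $\circ$ is multiplicative in its first argument; this is not among clauses (1)--(4) of \prettyref{thm:composition} and must be derived first (from strong additivity together with $\exp(f)\circ x=\exp(f\circ x)$, via $e^{\gamma_1}e^{\gamma_2}=e^{\gamma_1+\gamma_2}$). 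And in (1) the tail of the Taylor series is not $\preceq\eps$ but $\preceq \eps\,(\partial^{(2)}f\circ x)$, where $\partial^{(2)}f\circ x$ may well be infinite; the limit statement survives only because $\eps$ may be chosen infinitesimal relative to that quantity, so the quantifiers need more care than your one-line dismissal suggests.

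The genuine gap is in item (5). Your reduction is the right one --- show that $r_0\bigl(e^{\gamma_0\circ y}-e^{\gamma_0\circ x}\bigr)$ carries the sign of $\partial f$ and dominates the remaining contributions --- and the sign part does go through: besides the inductive hypothesis for $\gamma_0$ and monotonicity of $\exp$, it uses the H-field--type fact that $u\prec v$ with $u,v\not\asymp 1$ forces $\partial u\prec\partial v$, so that the leading term of $\partial f$ really does come from the leading term of $f-f^\circ$. But the dominance estimate $r_i\bigl(e^{\gamma_i\circ y}-e^{\gamma_i\circ x}\bigr)\prec r_0\bigl(e^{\gamma_0\circ y}-e^{\gamma_0\circ x}\bigr)$ for $i>0$ and an \emph{arbitrary} increment $y-x$ is exactly the hard core of the theorem, and the tools you invoke for the large-increment regime ($\exp(t)\ge 1+t$ and monotonicity of $\exp$ and $\log$) give one-sided bounds on each difference separately but no comparison between the two differences. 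What is needed is a Cauchy-mean-value--type inequality for the ratio of increments, and, as you yourself note, $\no$ has no mean value theorem and no way to chain infinitesimal increments across a non-infinitesimal interval. You have correctly located the obstruction, but locating it is not removing it: this is where the bulk of the corresponding argument in \cite{Berarducci2019} lives, and as written your proof of (5) does not close.
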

A natural question is whether we can extend the composition $f,x\mapsto f\circ x$ allowing both $f$ and $x$ to be surreal numbers (with $x>\N$). We shall return to this problem in the next section. 

\section{Conclusions}
The following table compares $\no$ with other rings and fields of formal series which can be embedded in $\no$ (with the ordinal $\omega$ playing the role of a formal variable). 
\begin{center}
	Power series $\;\subset \;$	Laurent series $\; \subset \; $ Puiseux series $\;\subset \;$ $\T$  $\;\subset \;$ $\no$. \end{center}

\smallskip

\begin{center}
		\begin{tabular}{ l| c | c| c| c|  c|}
		\cline{2-6}
		& Power series  & Laurent &   Puiseux & Transseries &  Surreals \\ 
		\hline
		\multicolumn{1}{|l|}{Ordered ring} & \checkmark & \checkmark &  \checkmark & \checkmark & \checkmark \\ \hline
		\multicolumn{1}{|l|}{Ordered field} &  & \checkmark &  \checkmark & \checkmark & \checkmark \\ \hline
		\multicolumn{1}{|l|}{Derivation} &  \checkmark & \checkmark & \checkmark & \checkmark & \checkmark \\ \hline
		\multicolumn{1}{|l|}{	Integrals} & \checkmark &  &  & \checkmark & \checkmark \\ 	\hline
		\multicolumn{1}{|l|}{ Infinite sums}  & \checkmark & \checkmark & restricted & restricted & \checkmark \\ 
		\hline
		\multicolumn{1}{|l|}{ exp and log }  &  &  &   & \checkmark & \checkmark \\ 
%		\hline	
%		\multicolumn{1}{|l|}{ Universal}  &  & &   &  & \checkmark \\ 
		\hline
		\multicolumn{1}{|l|}{ Composition}  &  \checkmark& \checkmark & \checkmark  & \checkmark & ? \\ 
		\hline	
	\end{tabular} 
\end{center} 
%All derivations commute with infinite sums of summable families. 
\bigskip

In the table, 
``Integrals'' means that the derivation is surjective
%``Composition'' refers to the possibility of defining a composition extending the composition of omega-series; ``Universal'' means that every Hardy fields embeds in it;  
and ``Restricted'' means that we do not consider all the possible sums of summable families, as illustrated by the following examples: 
\begin{itemize}
	\item $\sum_{n\in \N} \omega^{1/n}$ is a transseries (in $\omega$), but not a Puiseux series; 
	\item 
	$\sum_{n\in \N} \log_n(\omega)$ is a surreal, but not a transseries. 
\end{itemize}

It is an open problem whether there is an associative composition $\circ : \no\times \no^{>\N}\to \no$ satisfying the properties in \prettyref{thm:composition}. This
is connected to the problem of representing $\no$ as a field of ``hyperseries'' \cite{Aschenbrenner2017g}. 
Given a composition and a surreal derivation, we may require that they satisfy the compatibility relations expressed by \prettyref{prop:compatible}. In this case the derivation is actually definable in terms of the composition using the formula $\partial f = \lim\limits_{\eps\to 0} \frac{f\circ (\omega +\eps)-f\circ \omega}{\eps}$. 

In \cite[Thm. 8.4]{Berarducci2019} it is shown that the surreal derivation considered in this paper (the derivation defined in \prettyref{prop:simplest}) is not compatible with a composition. 
There could however be other derivations, obtained by modifying (SD5) in \prettyref{thm:derivation}, which are compatible with a composition. 
%This seems to say that either there is no composition, or there is something wrong with the derivation of \prettyref{prop:simplest}, and in particular with property (SD5) of \prettyref{thm:derivation}. 
In \cite{VandenDries2019a} it is shown that, up to isomorphism, the derivation of \prettyref{prop:simplest} is the unique one satisfying some natural properties. Those properties however do not include the fact that the derivation distributes over infinite sums. The problem whether there is a composition and a compatible ``better derivation'' is therefore still unresolved. 

\bibliographystyle{plainnat}

\end{document}